\newtheorem{proposition}{Proposition}
\newtheorem{theorem}[proposition]{Theorem}
\newtheorem{lemma}[proposition]{Lemma}
\newtheorem{corollary}[proposition]{Corollary}
\newtheorem{definition}{Definition}
\theoremstyle{remark}
\newtheorem{observation}{Observation}
\DeclareMathOperator{\arctanh}{arctanh}
\title{Rotation Vectors for Homeomorphisms of Non-Positively Curved Manifolds\footnote{MSC 37E45 37A99}}
\author{Pablo Lessa\footnote{CMAT, Facultad de Ciencias, Universidad de la Rep\'ublica, Uruguay}}
\begin{document}
\maketitle
\begin{abstract}
Rotation vectors, as defined for homeomorphisms of the torus that are isotopic to the identity, are generalized to such homeomorphisms of any complete Riemannian manifold with non-positive sectional curvature.  These generalized rotation vectors are shown to exist for almost every orbit of such a dynamical system with respect to any invariant measure with compact support.  The concept is then extended to flows and, as an application, it is shown how non-null rotation vectors can be used to construct a measurable semi-conjugacy between a given flow and the geodesic flow of a manifold.
\end{abstract}

\tableofcontents

\section{Introduction}
Since Poincar\'e introduced rotation numbers to study the dynamics of homeomorphisms of the circle there have been several types of `rotation objects' defined with the purpose of capturing the way the trajectories of a dynamical system `wind around' a manifold as time progresses (see \cite{Misiurewicz:2007} for an overview).

Perhaps the most direct generalization of rotation numbers has been the concept of rotation vectors as defined for homeomorphisms isotopic to the identity on the two-dimensional torus.  Although the situation is more complicated than for homeomorphisms of the circle (e.g. there may exist points without a well defined rotation vector, and different points may have different vectors associated to them) the concept has been successful at least in the following two ways:
\begin{enumerate}
\item All periodic orbits have a well defined rotation vector.  And, more generally, so do almost all points with respect to any invariant measure.
\item Simple hypotheses on the set of rotation vectors have strong dynamical consequences.  For example, the existence of three non-collinear rotation vectors implies that there are infinitely many periodic points and that topological entropy is positive (see \cite{MR1101087} and \cite{MR958891}).  Also, in some cases where there is a single rotation vector one can show that there exists a semi-conjugacy to a rigid rotation (see \cite{MR2501297}).
\end{enumerate}

On general manifolds, asymptotic cycles for flows (introduced in \cite{MR0088720}) and homological rotation vectors for homeomorphisms isotopic to the identity (e.g. see \cite{MR1325916}) both give information about how orbits `wind around' homology and have several applications.

Even on a compact hyperbolic surface, one can construct a flow possessing a periodic orbit which is homologically trivial (and hence defines a null asymptotic cycle) but homotopically non-trivial.  Furthermore, there are certain Lagrangian dynamical systems for which the critical value of the Abelian covering (whose group of covering transformations is the first homology group of the manifold over the integers) is strictly greater than that of the universal covering (see \cite[Section 2-7]{MR1720372}).  This suggests that between these two energy levels the trajectories may be exhibiting a behavior which is homologically trivial but homotopically non-trivial.

With these two examples in mind, it is natural to ask whether there exists a useful concept of rotation vector that is applicable to manifolds other than tori and which measures homotopical information of trajectories.

A first step in this direction was made in \cite{MR1093216} where `asymptotic homotopy cycles' are defined.  However, as the authors noted, the concept lacks a general existence theorem.

More recently, motivated by comparison theorems between different geodesic flows on the same Riemannian manifold, P.Boyland introduced rotation measures (see \cite{MR1802657}), which arise from associating to each trajectory of a flow on a fiber bundle (e.g. a Lagrangian flow) a geodesic of the base manifold (which is assumed to be compact and hyperbolic).  For an invariant measure supported on a homotopically non-trivial periodic orbit the associated rotation measure is supported on the minimizing geodesic of the same free homotopy class.  Hence rotation measures succeed in capturing homotopical information.

In this work, we introduce rotation vectors for homeomorphisms which are isotopic to the identity on any complete Riemannian manifold of non-positive sectional curvature.  In the case of the two-dimensional torus, our definition specializes to the usual one.  Furthermore, we prove that rotation vectors have the same existence properties on all manifolds under consideration (i.e. all periodic orbits, and almost all points with respect to any invariant measure with compact support, have rotation vectors).  This is our main result, the proof of which occupies the first seven sections of the paper.

In section \ref{periodic} we show that, for periodic orbits in free homotopy classes of positive length (e.g. all non-trivial homotopy classes on a compact hyperbolic manifold) the associated rotation vector is non-null. And in section \ref{sectionpast} we study the relationship between rotation vectors of a homeomorphism and its inverse (which on the torus are simply opposite), and conclude that for almost all points the two are either both zero, or different (one can easily construct an example of a periodic orbit on a compact hyperbolic surface such that the two vectors aren't opposite).

As an application, we extend the concept to flows and flows on fiber bundles over complete Riemannian manifolds of non-positive curvature  (sections \ref{sectionflows} and \ref{sectionbundles}) and prove a semi-conjugacy result which improves on \cite[Theorem 4.1]{MR1802657} by relaxing the restriction of strict hyperbolicity of the base manifold.  Here, we require the base manifold to satisfy a visibility condition and the example given in section \ref{sectionpast} shows that the result may fail without this hypothesis.

In section \ref{sectionexamples} we discuss implications of our results for homeomorphisms of surfaces and illustrate them with a discussion of the `magnetic flow'.

It is noteworthy that, on one extreme, the existence theorem for rotation vectors on the two-torus is a direct consequence of Birkhoff's ergodic theorem while, on the other extreme, it was observed by the authors of \cite{MR1093216} that there doesn't seem to be an adequate ergodic theorem for establishing the existence of asymptotic homotopy cycles.

The techniques we use for establishing our main theorem arose by successive generalization of Oseledets' multiplicative ergodic theorem and were developed in \cite{MR947327},\cite{MR1729880} and \cite{MR2271477}.  Since the space of interest for Oseldets' theorem is that of positive definite symmetric real matrices (which has non-positive curvature but also flat totally geodesic submanifolds) these generalizations are well adapted to non-positively curved manifolds.  However, the results in question concern orbits of cocycles of isometries and hence are not directly applicable to other homeomorphisms (for which, for example, it might be the case that two points on the same orbit don't even have isometric neighborhoods).

In sections \ref{rate} \ref{geodesic} and \ref{geometric}, besides a standard application of Kingman's ergodic theorem, we have basically isolated Karlsson and Margulis' geometric arguments from \cite{MR1729880} and specialized them to Hadamard manifolds (as opposed to uniformly convex and Busemann non-positively curved metric spaces which was their original domain of application).  In particular a more general version of our Lemma \ref{geometriclemma} is implicit in the proof of their main theorem.  The fact that neither the result nor the relevant definitions are explicitly stated in the cited work is one reason why we include a full proof (another being that in the case of Hadamard manifolds some simplifications are possible).

In section \ref{alignment}, we prove a new ergodic theorem which is related to \cite[Theorem 1.2]{MR2271477} but applies to sequences not arising from cocycles of isometries.  This is enough to establish the main existence theorems for rotation vectors.  The arguments of the rest of the paper rely on this result and on general facts about non-positively curved manifolds which are presented in an appendix.

\section{Statement of the Main Theorem}
\label{statement}
Let us begin by reviewing the definition of a rotation vector for a homeomorphism of the torus.

\begin{definition}[Rotation Vector for Torus Homeomorphisms]
\label{rotationvectorsfortorusmaps}
Let $T^d = \mathbb{R}^d/\mathbb{Z}^d$ be the $d$-dimensional torus, and let $f: T^d \to T^d$ be a homeomorphism that is isotopic to the identity.  Also, fix a lift $F: \mathbb{R}^d \to \mathbb{R}^d$ of $f$.

The rotation vector $v_F(x)$ of a point $x \in T^d$ is the following limit in case it exists:
\[v_F(x) = \lim_{n \to +\infty}\frac{F^n(\tilde{x}) - \tilde{x}}{n}\text{ where }\tilde{x}\in \mathbb{R}^d\text{ is any lift of }x\]
\end{definition}

The following is a well known consequence of Birkhoff's Ergodic Theorem.

\begin{proposition}
\label{vectorsexistonthetorus}
If $f: T^d \to T^d$ is isotopic to the identity, $F: \mathbb{R}^d \to \mathbb{R}^d$ is a lift of $f$, and $\mu$ is an $f$-invariant Borel probability measure.  Then the limit $v_F(x)$ exists for $\mu$-almost every $x \in T^d$.
\end{proposition}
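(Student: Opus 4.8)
The plan is to recognize the rotation vector as an ordinary Birkhoff average of a fixed continuous $\mathbb{R}^d$-valued function on the torus, and then invoke Birkhoff's Ergodic Theorem coordinatewise.

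First I would introduce the \emph{displacement function} $\phi \colon T^d \to \mathbb{R}^d$ defined by $\phi(x) = F(\tilde x) - \tilde x$, where $\tilde x \in \mathbb{R}^d$ is any lift of $x$. Because $f$ is isotopic to the identity, its lift $F$ commutes with every deck transformation $y \mapsto y + k$, $k \in \mathbb{Z}^d$; hence replacing $\tilde x$ by $\tilde x + k$ leaves $F(\tilde x) - \tilde x$ unchanged, so $\phi$ is well defined. Read in a single chart, $\phi$ is the difference of the continuous map $F$ with the identity, so $\phi$ is continuous, and being continuous on the compact space $T^d$ it is bounded; in particular each of its $d$ coordinate functions lies in $L^1(\mu)$ for any Borel probability measure $\mu$.

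Next I would establish the telescoping identity
\[
F^n(\tilde x) - \tilde x \;=\; \sum_{k=0}^{n-1} \bigl( F^{k+1}(\tilde x) - F^k(\tilde x) \bigr) \;=\; \sum_{k=0}^{n-1} \phi\bigl(f^k(x)\bigr),
\]
where the last equality uses that $F^k(\tilde x)$ is a lift of $f^k(x)$ together with the definition of $\phi$. Dividing by $n$, the sequence whose limit defines $v_F(x)$ is precisely the Birkhoff average $\frac1n\sum_{k=0}^{n-1}\phi(f^k(x))$ of $\phi$ along the orbit of $x$ under $f$.

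Finally, applying Birkhoff's Ergodic Theorem to each of the $d$ coordinates of $\phi$ (each in $L^1(\mu)$, with $f$ preserving $\mu$) yields convergence of these averages for $\mu$-almost every $x$; hence $v_F(x)$ exists $\mu$-almost everywhere, and it equals the Birkhoff average $\phi^{*}(x)$, with $\int \phi^{*}\,d\mu = \int \phi\,d\mu$. There is no serious obstacle here; the only point requiring genuine care is the well-definedness and continuity of $\phi$, which is exactly where the hypothesis that $F$ is a lift of a homeomorphism isotopic to the identity (so that $F$ commutes with integer translations) is used.
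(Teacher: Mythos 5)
Your proof is correct and is exactly the standard argument the paper is alluding to when it calls the proposition "a well known consequence of Birkhoff's Ergodic Theorem" (the paper gives no proof of its own). The key points you identify—that isotopy to the identity forces $F(y+k)=F(y)+k$, so the displacement $\phi$ descends to a continuous function on $T^d$, and that the telescoping sum turns $v_F(x)$ into a Birkhoff average—are precisely the intended reduction, and the paper itself later notes that on $T^d$ all lifts are admissible.
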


A point $x \in T^d$ has rotation vector $v_F(x)$ for a certain lift $F$ of a homeomorphism $f: T^d \to T^d$, if and only if for any lift $\tilde{x}$ of $x$ it holds that $\|F^n\tilde{x} - (\tilde{x} + nv_F(x))\| = o(n)$ when $n \to +\infty$.  If one considers the usual flat metric on $T^d$ then the fact that the curve $t \mapsto x + tv_F(x)$ is a geodesic shows that the following definition generalizes Definition \ref{rotationvectorsfortorusmaps}.

By a Riemannian covering we mean a covering map $\pi: \widetilde{M} \to M$ between Riemannian manifolds that is a local isometry at all points.  We will denote the distance function on a Riemannian manifold by $d$.

\begin{definition}[Rotation Vector]
\label{rotationvector}
Let $\pi:\widetilde{M} \to M$ be a Riemannian covering, $f: M \to M$ a homeomorphism, and $F:\widetilde{M} \to \widetilde{M}$ a lift of $f$.

A rotation vector $v_F(x)$ of a point $x \in M$ is a vector in the tangent space $T_xM$ such that for any lift $\tilde{x}$ of $x$ the following holds:
\[d(\tilde{\alpha}(n),F^n\tilde{x}) = o(n)\text{ when }n\to +\infty\]
where $\tilde{\alpha}$ is the lift starting at $\tilde{x}$ of the geodesic $\alpha:[0,+\infty) \to M$ defined by $\alpha(t) = \exp_x(tv_F(x))$.
\end{definition}

If the lift $F$ in the above definition commutes with all covering transformations then the existence of a rotation vector $v_F(x)$ is a statement about the $F$-orbit of a single lift $\tilde{x}$ of $x$.  Also, notice that in this case the expression $\rho(x) = d(\tilde{x},F\tilde{x})$ where $\tilde{x}$ is a lift of $x$ is a well defined continuous function from $M$ to $[0,+\infty)$.  Finally, notice that if $f:M \to M$ is isotopic to the identity then one can always obtain a lift $F: \widetilde{M} \to \widetilde{M}$ of $f$ that commutes with all covering transformations (e.g. this can be done by lifting the isotopy between the identity map on $M$ and $f$).

We recall that a Hadamard manifold is a complete, connected and simply connected Riemannian manifold with non-positive sectional curvature.

We can now state our main theorem as follows.

\begin{theorem}[Main Theorem]
\label{maintheorem}
Let $\widetilde{M}$ be a Hadamard manifold and $\pi:\widetilde{M} \to M$ a Riemannian covering.  For each $x \in M$ let $\tilde{x} \in \widetilde{M}$ denote an arbitrary lift of $x$.

Suppose that $f: M \to M$ is a homeomorphism that is isotopic to the identity, $F: \widetilde{M} \to \widetilde{M}$ is a lift of $f$ that commutes with all covering transformations, and $\mu$ is an $f$-invariant Borel probability measure satisfying the condition:
\[\int_M d(\tilde{x},F\tilde{x})\mathrm{d}\mu(x) < +\infty\]

Then for $\mu$-almost every $x \in M$ there exists a unique rotation vector $v_F(x) \in T_xM$.
\end{theorem}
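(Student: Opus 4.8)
The plan is to reduce the existence of the rotation vector to the asymptotic behavior of the sequence $x_n = F^n\tilde x$ in the Hadamard manifold $\widetilde M$, and then extract a direction using a geometric/ergodic argument of Karlsson–Margulis type. First I would observe that since $F$ commutes with all covering transformations, the function $\rho(x) = d(\tilde x, F\tilde x)$ is well defined on $M$, continuous, and by hypothesis $\mu$-integrable; moreover $d(\tilde x, F^{n+m}\tilde x) \le d(\tilde x, F^n\tilde x) + d(F^n\tilde x, F^{n+m}\tilde x)$ and the second term equals $d(\widetilde{f^n x}, F^m(\widetilde{f^n x}))$ by the commutation property, so $a_n(x) = d(\tilde x, F^n\tilde x)$ is a subadditive cocycle over $(M,f,\mu)$. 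Kingman's subadditive ergodic theorem then gives, for $\mu$-a.e. $x$, a well-defined \emph{drift} $\ell(x) = \lim_n a_n(x)/n \ge 0$, invariant under $f$.

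Next I would handle the two regimes. On the set where $\ell(x) = 0$, the orbit of $\tilde x$ under $F$ stays at sublinear distance from $\tilde x$, so $v_F(x) = 0$ is a rotation vector (and uniqueness will follow from the general uniqueness argument below, since distinct vectors give geodesics that diverge linearly). The substantive case is $\ell(x) > 0$. Here I would invoke the geometric lemma alluded to in the introduction (Lemma~\ref{geometriclemma}): in a Hadamard manifold, a sequence with positive drift that is \emph{almost additive along a subsequence} — a property one gets from Kingman applied along the orbit, combined with the non-positive-curvature convexity of the distance function — must converge to a point $\xi(x)$ in the boundary at infinity $\widetilde M(\infty)$, with the geodesic ray from $\tilde x$ to $\xi(x)$ shadowing the orbit: $d\bigl(\tilde x_n, \gamma_{\tilde x,\xi(x)}(\ell(x) n)\bigr) = o(n)$. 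The unit tangent vector at $\tilde x$ pointing toward $\xi(x)$, scaled by $\ell(x)$, projects to the desired $v_F(x) \in T_xM$, and the shadowing estimate is exactly the defining property in Definition~\ref{rotationvector}. Independence of the choice of lift $\tilde x$ follows because a different lift is $\gamma\tilde x$ for a covering transformation $\gamma$, and $\gamma$ commutes with $F$ and acts as an isometry, so it carries the whole picture (orbit, boundary point, ray) equivariantly.

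For the measurability and almost-everywhere statements I would note that Kingman provides an $f$-invariant measurable function $\ell$, and the boundary point $\xi(x)$ is obtained as a genuine limit in the cone topology on $\widetilde M \cup \widetilde M(\infty)$, hence depends measurably on $x$; disintegrating $\mu$ over ergodic components is not even necessary since all the convergence statements are pointwise a.e. Uniqueness is the easy direction: if $v, v'$ were two rotation vectors at $x$, the lifted geodesics $\tilde\alpha, \tilde\alpha'$ starting at $\tilde x$ would each track $F^n\tilde x$ to within $o(n)$, hence $d(\tilde\alpha(n), \tilde\alpha'(n)) = o(n)$; but in a Hadamard manifold two geodesic rays from the same point with $v \ne v'$ satisfy $d(\tilde\alpha(n),\tilde\alpha'(n)) \ge c\,n$ for some $c > 0$ (convexity of $t \mapsto d(\tilde\alpha(t),\tilde\alpha'(t))$, or the law of cosines comparison), a contradiction unless $v = v'$.

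The main obstacle is the positive-drift case: proving that the $F$-orbit actually converges to a boundary point with the linear shadowing estimate. This is where non-positive curvature is used essentially — one needs the subadditivity from Kingman to interact with the convexity of the distance function to force the "rays from $\tilde x$ to $\tilde x_n$" to stabilize in direction. The cited works \cite{MR1729880} and \cite{MR2271477} do this for cocycles of isometries, where consecutive steps are related by an isometry; here the steps are arbitrary, so I would isolate precisely the metric input actually needed (the almost-additivity along a Kingman subsequence plus Hadamard geometry) and prove Lemma~\ref{geometriclemma} from scratch in that generality, which is exactly what sections~\ref{rate}, \ref{geodesic}, \ref{geometric} and the new ergodic theorem of section~\ref{alignment} are designed to supply.
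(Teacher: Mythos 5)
Your high-level plan coincides with the paper's: set up $a_n(x)=d(\tilde x,F^n\tilde x)$ as a stationary subadditive process (using that $F$ commutes with covering transformations), apply Kingman (Corollary~\ref{rateofescapeforhomeos}) to obtain the drift $R(x)$, dispose of the $R=0$ case, and for $R>0$ produce a geodesic escort whose initial vector (scaled by $R$) projects to $v_F(x)$. The lift-independence via equivariance, the measurability remarks, and the uniqueness argument via convexity of $t\mapsto d(\tilde\alpha(t),\tilde\alpha'(t))$ are all correct and are exactly how the paper proceeds (via Lemma~\ref{vectorsvrsescorts} and Lemma~\ref{convexity}).

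There is, however, a genuine gap in the positive-drift case. You assert that alignment (``almost additive along a subsequence'') is ``a property one gets from Kingman applied along the orbit, combined with the non-positive-curvature convexity of the distance function.'' Neither attribution is right. Kingman alone gives $d(\tilde x,F^n\tilde x)/n\to R>0$ but does not force the orbit to enter arbitrarily thin $\epsilon$-cones $[\tilde x,F^n\tilde x]_\epsilon$ along a subsequence: the deterministic sequence $x_n=ne^{i\log n}$ in $\mathbb{C}$ (Section~\ref{geometric}) has linear rate of escape and $d(x_n,x_{n+1})=o(n)$ yet no geodesic escort. And the alignment property is purely metric; curvature/convexity enters only afterwards, in Lemma~\ref{geometriclemma}, to convert alignment into a geodesic escort. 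What actually supplies alignment almost everywhere is the new ergodic result Theorem~\ref{alignmentofrandomsequences}, whose key input is the Karlsson--Margulis lemma (Lemma~\ref{karlssonmargulis}, i.e.\ Lemma 4.1 of \cite{MR1729880}): for a subadditive process with positive time constant, a positive-measure set of points has infinitely many ``record times'' $n$ with $a(0,n)>a(k,n)$ for all $1\le k\le n$. The paper then bootstraps this to full measure via an invariance argument for the function $L$ of Observation~\ref{geometriclemmaobservation} and a contradiction using an indicator-function modification of the process. Without this extra ergodic mechanism (which goes strictly beyond Kingman), your claim that the rays from $\tilde x$ to $F^n\tilde x$ ``stabilize in direction'' is unsupported; you acknowledge at the end that Theorem~\ref{alignmentofrandomsequences} is needed, but the route you actually sketch (Kingman plus convexity) would not yield it.
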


The rotation vectors given by the above theorem depend on the choice of a lift $F$ which commutes with all covering transformations (from now on we will call such a lift `admissible').

If $M = T^d$ then it is simple to show that all lifts are admissible,  any two lifts differ by a translation with integer coordinates, and the corresponding rotation vectors satisfy the same relationship.

For hyperbolic manifolds we have the following result:
\begin{proposition}
If $M$ is a complete hyperbolic manifold with finite volume and $f:M \to M$ is isotopic to the identity then there is a unique admissible lift of $f$ to the universal Riemannian covering space $\widetilde{M}$ of $M$.
\end{proposition}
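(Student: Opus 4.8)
The plan is to separate existence of an admissible lift, which was in effect already observed before the Main Theorem, from uniqueness, which is the real content, and to reduce uniqueness to the triviality of the center of $\Gamma=\pi_1(M)$ acting by isometries on $\widetilde M$. For existence, take an isotopy $(f_t)_{t\in[0,1]}$ with $f_0=\mathrm{id}_M$, $f_1=f$, lift it to an isotopy $(F_t)$ of $\widetilde M$ with $F_0=\mathrm{id}_{\widetilde M}$, and set $F=F_1$; for any deck transformation $\delta$ the map $\delta^{-1}F_t\delta$ is again a lift of the homotopy $(x,t)\mapsto f_t(\pi x)$ agreeing with $F_t$ at $t=0$, so by uniqueness of lifts $\delta^{-1}F\delta=F$, i.e.\ $F$ is admissible.

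For uniqueness, suppose $F_1,F_2$ are admissible lifts of $f$. Both are homeomorphisms of $\widetilde M$ projecting to $f$, so $F_2\circ F_1^{-1}$ covers the identity and is hence a deck transformation $\gamma\in\Gamma$; thus $F_2=\gamma\circ F_1$. Since $F_1$ and $F_2$ commute with every $\delta\in\Gamma$, we have for all such $\delta$ the chain $\gamma\delta F_1=\gamma F_1\delta=F_2\delta=\delta F_2=\delta\gamma F_1$; cancelling the bijection $F_1$ yields $\gamma\delta=\delta\gamma$, so $\gamma\in Z(\Gamma)$. Hence it suffices to prove $Z(\Gamma)=\{\mathrm{id}\}$, for then $\gamma=\mathrm{id}$ and $F_1=F_2$.

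To prove $Z(\Gamma)=\{\mathrm{id}\}$ I would argue on the boundary at infinity of $\widetilde M$ (assuming, as one should, that $\dim M\ge 2$; in dimension $1$ the only finite-volume example is the circle, for which the statement is false). Because $M$ is a manifold, $\Gamma$ acts freely and is therefore torsion-free, so any $\gamma\in\Gamma\setminus\{\mathrm{id}\}$ is hyperbolic or parabolic and its set $\mathrm{Fix}_\infty(\gamma)$ of fixed points at infinity is non-empty and proper (two points, or one). If such a $\gamma$ were central, then $\delta\,\mathrm{Fix}_\infty(\gamma)=\mathrm{Fix}_\infty(\delta\gamma\delta^{-1})=\mathrm{Fix}_\infty(\gamma)$ for every $\delta\in\Gamma$, making $\mathrm{Fix}_\infty(\gamma)$ a non-empty, proper, closed, $\Gamma$-invariant subset of $\partial_\infty\widetilde M$. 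But finite volume of $M$ makes $\Gamma$ a lattice, so its limit set is the whole sphere at infinity and $\Gamma$ acts minimally there; no such subset exists, a contradiction. Thus $Z(\Gamma)=\{\mathrm{id}\}$. (Alternatively one may invoke Borel's density theorem: a lattice in the center-free semisimple group $\mathrm{Isom}(\mathbb{H}^n)$, $n\ge 2$, is Zariski dense, so its centralizer, hence in particular $Z(\Gamma)$, is trivial.) The single non-formal point is exactly this triviality of the center, and it is precisely where finite volume is needed: without it one has counterexamples, e.g.\ $\mathrm{id}$ and a hyperbolic $\gamma$ are distinct admissible lifts of $\mathrm{id}_M$ on the infinite-volume manifold $\widetilde M/\langle\gamma\rangle$.
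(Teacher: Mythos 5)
Your proof is correct, and it reaches the key algebraic fact by a genuinely different geometric route than the paper. Both arguments reduce uniqueness to showing $Z(\Gamma)=\{\mathrm{id}\}$ for $\Gamma=\pi_1(M)$: you observe that if $F_1,F_2$ are admissible lifts then $F_2 F_1^{-1}\in Z(\Gamma)$, and the paper makes the equivalent observation that a nontrivial element commuting with all deck transformations would make the maximal normal abelian subgroup nontrivial. Where you diverge: the paper invokes Eberlein's splitting theorem to conclude that a nontrivial normal abelian subgroup forces $\widetilde M$ to split as $\mathbb{R}\times X$, and then contradicts strict negativity of curvature by exhibiting parallel geodesics at constant distance. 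You instead run a boundary-dynamics argument: a nontrivial central $\gamma$ is hyperbolic or parabolic (torsion-free action), so $\mathrm{Fix}_\infty(\gamma)$ is a nonempty proper closed $\Gamma$-invariant subset of $\partial_\infty\widetilde M$, which is impossible because the finite-volume hypothesis makes $\Gamma$ a lattice whose limit set is the full sphere and whose action there is minimal; alternatively you invoke Borel density. Each approach has a different natural scope: the paper's splitting argument, as the author remarks, depends only on the strong visibility property and thus extends beyond constant curvature; your argument uses the elliptic/parabolic/hyperbolic trichotomy and minimality on the full boundary (or Borel density in $PO(n,1)$), which is sharper and more elementary for the constant-curvature case but more tied to it. You also spell out the existence half (lifting the isotopy), which the paper handles with a one-line remark in Section \ref{statement}; that is consistent with the paper's treatment.
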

\begin{proof}
Let $F_1$ and $F_2$ be distinct admissible lifts.  It follows that $F_1^{-1}\circ F_2$ is a covering tranformation which commutes with all others.  In particular the maximal normal Abelian subgroup of $\pi_1(M)$ is non-trivial. By \cite[Theorem 10.3.10]{MR1441541} this implies that $\widetilde{M}$  splits into a Riemannian direct product isometric to $\mathbb{R} \times X$ for some simply connected Riemannian manifold $X$.  However by taking two distinct points $x_1,x_2 \in X$ and considering the curves $t \mapsto (t,x_i)$ we see that this would imply the existence of disjoint geodesics which remain at a positive fixed distance.  This contradicts the fact that $\widetilde{M}$ is isometric to hyperbolic space.
\end{proof}

We observe that because its proof proceeds by contradicting the strong visibility property (see Appendix) the proposition is valid if this property is assumed for $M$ instead of hyperbolicity.

In general, assuming that $M$ is complete and has finite volume one obtains that either the conclusion of the above proposition holds or $\widetilde{M}$ splits into a Riemannian direct product with a non-trivial factor isometric to $\mathbb{R}^d$.  Furthermore if $M$ is compact it follows from \cite[The Center Theorem]{MR0334083} that $M$ is foliated by isometrically embdedded flat tori.  This suggests that a characterization of the dependence of rotation vectors on the choice of lift similar to the case in which $M = T^d$ might be possible.  However, such a result is unknown to the author at the time of writing.

\section{Rate of Escape}
\label{rate}

\begin{definition}[Rate of Escape]
\label{rateofescape}
The rate of escape of a sequence $\{x_n\}_{n \ge 0} \subset X$ in a metric space $X$ is the value of the following limit if it exists:
\[R = \lim_{n \to +\infty}\frac{d(x_0,x_n)}{n}\]
\end{definition}

Observe that if $M,\widetilde{M},f$ and $F$ are as in the statement of Theorem \ref{maintheorem} and there exists a rotation vector $v_F(x)$ for a certain $x \in M$ then for any lift $\tilde{x}$ of $x$ the sequence $\{F^n\tilde{x}\}_{n\ge 0}$ has rate of escape $\|v_F(x)\|$ (for details see Lemma \ref{vectorsvrsescorts}).  In view of this our first task is to show that for almost every $x \in M$ the sequence $\{F^n\tilde{x}\}_{n\ge 0}$ has a well defined finite rate of escape.

The purpose of this section is to show a somewhat more general fact, i.e. that a certain general class of random sequences in a metric space almost surely have a well defined finite rate of escape.  This is a consequence of Kingman's Subadditive Ergodic Theorem which we restate in a convenient fashion.

\begin{theorem}[Kingman's Subadditive Ergodic Theorem \cite{MR0254907}]
\label{kingmanstheorem}
If $(M,\mathcal{B},\mu)$ is a probability space, $f:M \to M$ is a measure preserving measurable function, and $\{a(m,n)\}_{0 \le m < n}$ (where $m,n \in \mathbb{Z}$) is a family of measurable functions from $M$ to $\mathbb{R}$ satisfying:
\begin{enumerate}
\item $a_x(l,n) \le a_x(l,m) + a_x(m,n)$ for all $x \in M$ and all $0 \le l < m < n$
\item $a_x(m+1,n+1) = a_{f(x)}(m,n)$ for all $x \in M$ and all $0 \le m < n$
\item $\int_M |a_x(0,1)| \mathrm{d}\mu(x) < +\infty$
\item $\lim_{n \to +\infty} \int_M \frac{a_x(0,n)}{n} \mathrm{d}\mu(x) > -\infty$
\end{enumerate}
Then there exists an invariant and integrable function $R: M \to \mathbb{R}$ such that
\[\int_A R(x) \mathrm{d}\mu(x) = \lim_{n \to +\infty}\int_A \frac{a_x(0,n)}{n} \mathrm{d}\mu(x)\]
for every invariant measurable set $A \subset M$.  And for $\mu$ almost every $x \in M$ the following holds:
\[ R(x) = \lim_{n \to + \infty}\frac{a_x(0,n)}{n}\]
\end{theorem}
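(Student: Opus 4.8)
The plan is to recognize this as the standard subadditive ergodic theorem in disguise and then outline a proof of the latter. Writing $g_n(x) := a_x(0,n)$, hypothesis (2) iterated gives $a_x(m,n) = g_{n-m}(f^m x)$, so hypothesis (1) reads $g_{m+n}(x) \le g_m(x) + g_n(f^m x)$; since iterating this from $g_1$ gives $g_n \le \sum_{k=0}^{n-1} g_1\circ f^k$, hypothesis (3) together with the $f$-invariance of $\mu$ makes every $g_n$ integrable. Thus $\{g_n\}$ is a subadditive process over $(M,\mathcal{B},\mu,f)$, and what must be shown is that $g_n/n$ converges $\mu$-a.e. (and in $L^1$) to an invariant integrable $R$ with the stated integral identity on invariant sets. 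First I would note that $c_n := \int_M g_n\,\mathrm{d}\mu$ is a subadditive sequence (here the invariance of $\mu$ enters), so by Fekete's lemma $\gamma := \lim_n c_n/n = \inf_n c_n/n$ exists in $[-\infty,+\infty)$, and hypothesis (4) is exactly the assertion $\gamma > -\infty$. A routine computation with subadditivity (using that $g_1$ is a.e. finite) then shows $\overline g := \limsup_n g_n/n$ and $\underline g := \liminf_n g_n/n$ are $f$-invariant $\mu$-a.e.

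The proof comes down to the two inequalities $\int_M \overline g\,\mathrm{d}\mu \le \gamma \le \int_M \underline g\,\mathrm{d}\mu$. For the upper one I would fix $m \ge 1$, write $n = qm + r$ with $0 \le r < m$, and iterate subadditivity to get $g_n(x) \le \sum_{i=0}^{q-1} g_m(f^{im}x) + g_r(f^{qm}x)$. The last term over $n$ tends to $0$ a.e. (Borel--Cantelli gives $\varphi(f^k x)/k \to 0$ a.e. for any $\varphi \in L^1$, applied to $\varphi = \max_{0\le r<m}|g_r|$), while Birkhoff's ergodic theorem for the measure-preserving map $f^m$ and the integrable function $g_m$ gives $\tfrac1q\sum_{i<q} g_m(f^{im}x) \to \mathbb{E}[g_m \mid \mathcal{I}_{f^m}](x)$ a.e.; hence $\overline g \le \tfrac1m \mathbb{E}[g_m \mid \mathcal{I}_{f^m}]$ a.e., which after integrating gives $\int \overline g\,\mathrm{d}\mu \le c_m/m$ for every $m$, i.e. $\int \overline g\,\mathrm{d}\mu \le \gamma$ (in particular $\overline g$ is a.e. finite and integrable).

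The hard part will be the lower inequality $\gamma \le \int_M \underline g\,\mathrm{d}\mu$, which I would obtain by a greedy stopping-time argument. Fix $\varepsilon, K > 0$, set $\underline g_K := \max(\underline g, -K)$ (an integrable, $f$-invariant function), and let $\sigma(x) := \min\{n \ge 1 : g_n(x) < n(\underline g_K(x) + \varepsilon)\}$, finite $\mu$-a.e. since $\liminf_n g_n(x)/n = \underline g(x) < \underline g_K(x) + \varepsilon$; put $B_N := \{\sigma > N\}$, so $\mu(B_N) \to 0$ as $N\to\infty$. For large $n$, decompose $\{0,\dots,n-1\}$ along the orbit of $x$ greedily, taking a block of length $\sigma(f^j x) \le N$ whenever $f^j x \notin B_N$ and $j + \sigma(f^j x) \le n$, and a single step otherwise; subadditivity, the definition of $\sigma$, and the invariance of $\underline g_K$ then give
\[ g_n(x) \le (\underline g_K(x) + \varepsilon)\,n + K\big(\#\{j < n : f^j x \in B_N\} + N\big) + \sum_{j \text{ a single step}} g_1(f^j x), \]
where the single-step indices all lie in $\{j < n : f^j x \in B_N\} \cup \{n-N,\dots,n-1\}$. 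Integrating, using invariance of $\mu$, dividing by $n$ and letting $n \to \infty$ yields $\gamma \le \int \underline g_K\,\mathrm{d}\mu + \varepsilon + K\mu(B_N) + \int_{B_N}|g_1|\,\mathrm{d}\mu$; then letting $N \to \infty$, then $\varepsilon \to 0$, then $K \to \infty$ (monotone convergence $\underline g_K \downarrow \underline g$) gives $\gamma \le \int \underline g\,\mathrm{d}\mu$. I expect this paragraph to be where the real work is.

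Finally I would assemble the conclusion. From $\underline g \le \overline g$ pointwise and $\int \underline g\,\mathrm{d}\mu \ge \gamma \ge \int \overline g\,\mathrm{d}\mu$ one gets $\overline g = \underline g =: R$ $\mu$-a.e., so $g_n/n \to R$ a.e.; $R$ is $f$-invariant, $R \le \tfrac1m\mathbb{E}[g_m \mid \mathcal{I}_{f^m}]$ a.e. is bounded above by an $L^1$ function, and $\int R\,\mathrm{d}\mu = \gamma \in \mathbb{R}$, which forces $R$ to be a.e. finite; redefining $R$ on a null set yields an invariant, integrable $R : M \to \mathbb{R}$. For the $L^1$ assertion, $g_n/n \le \tfrac1n\sum_{k<n} g_1\circ f^k$, whose right side converges in $L^1$ by Birkhoff, so $(g_n/n - R)^+ \le \big|\tfrac1n\sum_{k<n} g_1\circ f^k - R\big| \to 0$ in $L^1$, while $\int (g_n/n - R)\,\mathrm{d}\mu \to \gamma - \gamma = 0$ then forces $(g_n/n - R)^- \to 0$ in $L^1$ as well; hence $g_n/n \to R$ in $L^1$, and in particular $\int_A g_n/n\,\mathrm{d}\mu \to \int_A R\,\mathrm{d}\mu$ for every measurable $A$, a fortiori every invariant $A$. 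The only genuinely delicate point is the stopping-time estimate of the third paragraph; everything else is bookkeeping around Fekete's lemma and Birkhoff's theorem.
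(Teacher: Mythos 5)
The paper does not prove Kingman's theorem; it states it as a classical result and refers the reader to Kingman's original article and to Krengel's book, so there is no ``paper proof'' to compare yours against. What you have written is a sketch of the Katznelson--Weiss-style proof (decompose the orbit greedily along a stopping time), which is one of the standard modern proofs and is well suited to this version of the statement. Your reduction to $g_n := a_\cdot(0,n)$, the Fekete/Birkhoff argument for $\int \overline{g}\,\mathrm{d}\mu \le \gamma$, and the stopping-time argument for $\gamma \le \int \underline{g}\,\mathrm{d}\mu$ are all sound; the remark that conditions (3) and (4) together with subadditivity force each $g_n$ to be integrable is exactly the observation the paper itself makes in the discussion following the statement.

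The one step that fails as written is in the final paragraph: $\bigl|\tfrac1n\sum_{k<n} g_1\circ f^k - R\bigr|$ does \emph{not} tend to $0$ in $L^1$ in general. By Birkhoff the Ces\`aro averages of $g_1$ converge in $L^1$ to $\mathbb{E}[g_1\mid\mathcal{I}]$, not to $R$, and for a genuinely subadditive process $R$ can be strictly smaller than $\mathbb{E}[g_1\mid\mathcal{I}]$ on a set of positive measure. The conclusion you want, namely $(g_n/n - R)^+ \to 0$ in $L^1$, is still correct, but it requires a generalized dominated convergence (Pratt) argument: $(g_n/n - R)^+ \to 0$ $\mu$-a.e., it is dominated by $\bigl(\tfrac1n\sum_{k<n}g_1\circ f^k - R\bigr)^+$, and these dominating functions themselves converge in $L^1$ to $\mathbb{E}[g_1\mid\mathcal{I}]-R$, which suffices. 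Alternatively, since the theorem as stated only asks for the integral identity over \emph{invariant} sets $A$, you can bypass $L^1$ convergence entirely by restricting the whole process to $A$ (which is again a stationary subadditive process over the normalized measure $\mu(\cdot\cap A)/\mu(A)$) and applying the pointwise result plus $\int R\,\mathrm{d}\mu = \gamma$ to that restriction.
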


A family of measurable functions satisfying condition $1$ is called a subadditive process.  Condition $2$ guarantees that this process is stationary.  Conditions $3$ and $4$ imply in particular that each function in the process is integrable (this is a consequence of subadditivity).  Processes satisfying condition $4$ are said to have ``finite time constant'', one can prove Kingman's Theorem without this condition but the pointwise limit $R$ will no longer be integrable (e.g. it might be equal to $-\infty$ on a set of positive probability).  Since its first proof in \cite{MR0254907}, the theorem has received many alternative proofs.  See \cite{MR797411} for a general reference.

A typical example of a subadditive process arising from a dynamical system is the following: Let $f: M \to M$ be a volume preserving diffeomorphisms of a compact Riemannian manifold $M$ and define $a_x(m,n) = \log(\|D_{f^m(x)}f^{n-m}\|)$.  In this example Kingman's theorem gives the existence of the largest Lyapunov exponent for almost every orbit (which was previously an independent result first proved by Furstenberg and Kesten, see \cite[Introduction]{MR2271477}).

We will use Kingman's theorem in the following form.

\begin{corollary}[Rate of Escape for Random Sequences]
\label{rateofescapeforhomeos}
Suppose $(M,\mathcal{B},\mu)$ is a probability space and $f: M \to M$ is a measurable and measure preserving transformation.

Let $X$ be a metric space and $\phi: M \to X^{\mathbb{N}}$ a measurable function such that the family of functions $\{a(m,n)\}_{n > m \ge 0}$ that is defined by
\[a(m,n): M \to [0,+\infty)\]
\[a_x(m,n) = d(\phi(x)_m,\phi(x)_n)\]
satisfies the hypothesis of Theorem \ref{kingmanstheorem}.  Then there exists an invariant and integrable function $R: M \to [0,+\infty)$ such that
\[\int_AR(x)\mathrm{d}\mu(x) = \lim_{n \to +\infty}\int_A\frac{a_x(0,n)}{n}\mathrm{d}\mu(x)\]
for every invariant measurable set $A \subset M$.  And for almost every $x \in M$ the following holds:
\[R(x) = \lim_{n \to +\infty}\frac{a_x(0,n)}{n}\]
In particular, for almost every $x \in M$ the sequence $\phi(x)$ has a finite rate of escape.
\end{corollary}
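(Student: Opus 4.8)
The plan is to recognize that this corollary is an essentially immediate specialization of Kingman's theorem (Theorem~\ref{kingmanstheorem}); all of the work has been front-loaded into the hypothesis that the family $\{a(m,n)\}$ satisfies conditions 1--4. The first thing I would point out is that condition~1 (subadditivity) is in fact automatic in this setting, being nothing but the triangle inequality in $X$:
\[a_x(l,n) = d(\phi(x)_l,\phi(x)_n) \le d(\phi(x)_l,\phi(x)_m) + d(\phi(x)_m,\phi(x)_n) = a_x(l,m) + a_x(m,n).\]
So the real content of the assumed hypothesis is conditions 2--4 (stationarity, integrability of $a(0,1)$, and the finite time constant), which are taken outright.

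Next I would simply invoke Theorem~\ref{kingmanstheorem} to produce the invariant integrable function $R$ together with the integral identity $\int_A R\,\mathrm{d}\mu = \lim_n \int_A a_x(0,n)/n\,\mathrm{d}\mu$ for invariant $A$ and the pointwise convergence $R(x) = \lim_n a_x(0,n)/n$ for $\mu$-a.e.\ $x$.

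Then I would record the two refinements particular to the present statement. Since $a_x(0,n) = d(\phi(x)_0,\phi(x)_n) \ge 0$ for every $n$, each average $a_x(0,n)/n$ is non-negative, so $R \ge 0$ $\mu$-a.e.; after discarding a null set we may regard $R$ as taking values in $[0,+\infty)$. Finally, integrability of $R$ forces $R(x) < +\infty$ for $\mu$-a.e.\ $x$, and by Definition~\ref{rateofescape} the rate of escape of the sequence $\phi(x)$ is exactly $\lim_n d(\phi(x)_0,\phi(x)_n)/n = \lim_n a_x(0,n)/n = R(x)$; hence for $\mu$-a.e.\ $x$ the sequence $\phi(x)$ has a well-defined finite rate of escape, as claimed.

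I do not expect any genuine obstacle. The only mildly delicate points are bookkeeping: that ``rate of escape'' is anchored at the index $0$, which is precisely the $m=0$ slice of $a$, and that non-negativity of distances is what upgrades the codomain of $R$ from $\mathbb{R}$ to $[0,+\infty)$.
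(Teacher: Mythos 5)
Your proof is correct and matches the approach the paper intends: the paper states this as an unproved corollary because it is exactly the direct application of Theorem~\ref{kingmanstheorem} to the given process, with the only extra observations being that $R\ge 0$ (since each $a_x(0,n)\ge 0$) and that, by Definition~\ref{rateofescape}, the pointwise limit $\lim_n a_x(0,n)/n$ is the rate of escape of $\phi(x)$. Your remark that subadditivity is automatic from the triangle inequality is a fair aside, though it is already subsumed in the blanket assumption that the family satisfies all four hypotheses.
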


\section{Geodesic Escorts}
\label{geodesic}

The main theorems of \cite{MR947327} and \cite{MR1729880} can be restated in terms of the following definition.  We will prove in this section that it is also connected to rotation vectors.

\begin{definition}[Geodesic Escort]
\label{geodesicescort}
Let $(X,d)$ be a metric space.  A sequence $\{x_n\}_{n \ge 0} \subset X$ is said to be escorted by a geodesic, if there exists a function $\alpha:[0,+\infty) \to X$ that is either constant or a local isometry onto its image and satisfies:
\begin{itemize}
\item $\alpha(0) = x_0$
\item $d(x_n, \alpha(d(x_0,x_n))) = o(n)\text{ when }n \to +\infty$
\end{itemize}
\end{definition}

Notice that any sequence with rate of escape equal to zero is escorted by a (constant) geodesic.

Before giving a proof of a necessary and sufficient condition for the existence of a rotation vector we would like to recall the following facts:
\begin{itemize}
\item If $\widetilde{M}$ is a Hadamard manifold then it is diffeomorphic to $\mathbb{R}^{\text{dim}(\widetilde{M})}$ (see for example \cite{MR1666820} Theorem 3.8 on page 252).
\item Any two points in $\widetilde{M}$ belong to a unique geodesic (up to reparametrizations) and therefore any arclength parametrization of this geodesic is a (global) isometry onto its image (see \cite{MR1666820} p.353 Corollary 3.11).
\end{itemize}

\begin{lemma}
\label{vectorsvrsescorts}
Let $\widetilde{M}$ be a Hadamard manifold and $\pi:\widetilde{M} \to M$ a Riemannian covering.

Suppose that $f: M \to M$ is a homeomorphism that is isotopic to the identity, and that $F: \widetilde{M} \to \widetilde{M}$ is a lift of $f$ that commutes with all covering transformations.

A rotation vector $v_F(x)$ exists for a point $x \in M$ if and only if there exists a lift $\tilde{x}$ of $x$ such that $\{F^n\tilde{x}\}_{n \ge 0}$ has a well defined rate of escape and is escorted by a geodesic.

In such a case it holds that for any lift $\tilde{x}$ of $x$ the sequence $\{F^n\tilde{x}\}$ is escorted by a geodesic and has rate of escape $\|v_F(x)\|$.
\end{lemma}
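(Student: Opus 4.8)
The plan is to prove the equivalence by first establishing the ``easy'' direction — that a rotation vector gives a geodesic escort with a well-defined rate of escape — and then the converse, which is where the real content lies.

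First I would unwind Definition \ref{rotationvector}. Suppose $v_F(x)$ exists. Fix a lift $\tilde x$ of $x$, let $\alpha(t) = \exp_x(tv_F(x))$ and let $\tilde\alpha$ be the lift of $\alpha$ starting at $\tilde x$. Since $\widetilde M$ is Hadamard, $\tilde\alpha$ is a global isometry onto its image, so $d(\tilde x, \tilde\alpha(t)) = t$ for all $t \ge 0$; thus $\|v_F(x)\| = d(\tilde x,\tilde\alpha(1))$, and more generally the curve $t \mapsto \tilde\alpha(t)$ parametrizes the unique geodesic through $\tilde x$ in its direction. The defining property $d(\tilde\alpha(n), F^n\tilde x) = o(n)$ immediately gives, by the triangle inequality, $|d(\tilde x, F^n\tilde x) - d(\tilde x, \tilde\alpha(n))| = |d(\tilde x, F^n \tilde x) - n\|v_F(x)\|| = o(n)$, so the rate of escape of $\{F^n\tilde x\}$ exists and equals $\|v_F(x)\|$. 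To see that $\{F^n\tilde x\}$ is escorted by $\tilde\alpha$ in the sense of Definition \ref{geodesicescort}, I need $d(F^n\tilde x, \tilde\alpha(d(\tilde x, F^n\tilde x))) = o(n)$; since $|d(\tilde x,F^n\tilde x) - n\|v_F(x)\|| = o(n)$ and $\tilde\alpha$ is $1$-Lipschitz, $d(\tilde\alpha(d(\tilde x,F^n\tilde x)), \tilde\alpha(n\|v_F(x)\|)) = o(n)$ (if $\|v_F(x)\| > 0$ reparametrize; if $\|v_F(x)\| = 0$ everything is bounded), and $\tilde\alpha(n\|v_F(x)\|) = \tilde\alpha(n)$ up to the reparametrization constant, which is within $o(n)$ of $F^n\tilde x$. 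Finally, independence of the lift: any other lift is $\gamma\tilde x$ for a covering transformation $\gamma$; since $F$ commutes with $\gamma$ and $\gamma$ is an isometry, $F^n(\gamma\tilde x) = \gamma(F^n\tilde x)$, and $\gamma\tilde\alpha$ is the lift of $\alpha$ (a closed-up version with the same projection $\alpha$) starting at $\gamma\tilde x$ — here I use that $\pi\circ\gamma = \pi$ so $\pi\circ(\gamma\tilde\alpha) = \alpha$ — so the escort and rate of escape transport isometrically. This also shows the last sentence of the lemma.

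For the converse, suppose for some lift $\tilde x$ the sequence $y_n := F^n\tilde x$ has rate of escape $R = \lim d(\tilde x, y_n)/n$ and is escorted by a geodesic $\beta : [0,+\infty)\to\widetilde M$ with $\beta(0) = \tilde x$ and $d(y_n,\beta(d(\tilde x,y_n))) = o(n)$. If $R = 0$, then $d(\tilde x, y_n) = o(n)$; take $v_F(x) = 0$, the constant geodesic, and we are done. If $R > 0$, then $\beta$ is a genuine (non-constant) local isometry, hence a global geodesic ray from $\tilde x$ in $\widetilde M$; define $v_F(x) \in T_xM$ as $d\pi_{\tilde x}$ applied to $R\cdot\beta'(0)$, so that the geodesic $\alpha(t) = \exp_x(t v_F(x))$ in $M$ lifts starting at $\tilde x$ to $\tilde\alpha(t) = \beta(Rt)$ — here I use that $\pi$ is a local isometry, so $\pi\circ\beta$ is the geodesic in $M$ with initial velocity $d\pi(\beta'(0))$ and $\pi\circ\tilde\alpha = \alpha$. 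Now $d(\tilde\alpha(n), y_n) = d(\beta(Rn), y_n) \le d(\beta(Rn), \beta(d(\tilde x,y_n))) + d(\beta(d(\tilde x,y_n)), y_n) = |Rn - d(\tilde x,y_n)| + o(n) = o(n)$, the first term being $o(n)$ exactly because $R$ is the rate of escape. So $v_F(x)$ is a rotation vector, and by the already-proved forward direction the escort conclusion holds for all lifts. Uniqueness of $v_F(x)$ is not asserted here (that is part of the Main Theorem), so I would not address it.

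The only subtle point, and the one I would be most careful about, is the passage from ``escorted by the geodesic $\beta$'' to ``$d(\tilde\alpha(n), F^n\tilde x) = o(n)$'': one must correctly absorb the discrepancy between the two parametrizations, the arclength parameter $d(\tilde x, y_n)$ coming from the escort definition and the linear parameter $Rn$ coming from the rate of escape. This is handled by the $1$-Lipschitz property of $\beta$ together with $|d(\tilde x, y_n) - Rn| = o(n)$, but it is worth writing out. A secondary bookkeeping point is the case $R = 0$ versus $R > 0$, which I would keep separate throughout since in the former case the ``geodesic'' is constant and $\beta'(0)$ is not defined.
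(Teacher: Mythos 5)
Your proof is correct and follows essentially the same route as the paper's: triangle inequality to compute the rate of escape, unit-speed reparametrization of $\tilde\alpha$ and absorption of the $o(n)$ discrepancy between $d(\tilde x, F^n\tilde x)$ and $n\|v_F(x)\|$ for the forward escort, and in the converse defining $v_F(x) = d\pi(R\beta'(0))$ and bounding $d(\beta(Rn), F^n\tilde x)$ via the triangle inequality through $\beta(d(\tilde x, F^n\tilde x))$. The only minor slip is calling $\tilde\alpha$ ``$1$-Lipschitz''; since $\tilde\alpha(t) = \exp_{\tilde x}(tv)$ with $\|v\| = \|v_F(x)\|$ it is $\|v_F(x)\|$-Lipschitz, but this does not affect the $o(n)$ estimate.
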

\begin{proof}
First suppose $v = v_F(x) \in T_xM$ is a rotation vector for $x$.  For any lift $\tilde{x}$ of $x$ the geodesic $\tilde{\alpha}:[0,+\infty) \to \widetilde{M}$ given by Definition \ref{rotationvector} satisfies:
\[d(\tilde{\alpha}(n),F^n\tilde{x}) = o(n)\text{ when }n\to+\infty\]

Since each pair of points in $\widetilde{M}$ belongs to a unique geodesic we have that:
\[d(\tilde{\alpha}(0),\tilde{\alpha}(n)) = d(\tilde{x},\tilde{\alpha}(n)) = n\|\tilde{\alpha}'(0)\| = n\|v\|\]

The triangle inequality now implies:
\[d(\tilde{x},\tilde{\alpha}(n)) - d(\tilde{\alpha}(n),F^n\tilde{x}) \le d(\tilde{x},F^n\tilde{x}) \le d(\tilde{x},\tilde{\alpha}(n)) + d(\tilde{\alpha}(n),F^n\tilde{x})\]

And this in turn implies:
\[n\|v\| - o(n) \le d(\tilde{x},F^n\tilde{x}) \le n\|v\| + o(n)\]

Which shows that $\{F^n\tilde{x}\}_{n \ge 0}$ has rate of escape $\|v\|$.

If $\|v\| = 0$ then $\alpha$ is constant and is a geodesic escort for $\{F^n\tilde{x}\}$, otherwise let $\beta: [0,+\infty) \to \widetilde{M}$ be the geodesic starting at $\tilde{x}$ with $\beta'(0) = \tilde{\alpha}'(0)/\|\tilde{\alpha}'(0)\|$.  Since 
\[\beta(d(\tilde{x},F^n\tilde{x})) = \beta(n\|v\| + o(n)) = \tilde{\alpha}(n + o(n))\]
one has that:
\[d(F^n\tilde{x},\beta(d(\tilde{x},F^n\tilde{x})) \le d(F^n\tilde{x},\tilde{\alpha}(n)) + d(\tilde{\alpha}(n),\tilde{\alpha}(n + o(n)))= o(n)\]
and therefore $\beta$ escorts the sequence $\{F^n\tilde{x}\}_{n\ge 0}$.

Suppose now that for some lift $\tilde{x}$ of $x$ one has that $\{F^n\tilde{x}\}_{n \ge 0}$ has rate of escape $R$ and is escorted by a geodesic $\beta$.  Let $\gamma:\widetilde{M} \to \widetilde{M}$ be a covering transformation.  Since $F$ commutes with $\gamma$ and $\gamma$ is an isometry one has that $\gamma\circ \beta$ is a geodesic escort for the sequence $\{F^n(\gamma\tilde{x})\}_{n \ge 0}$ and this sequence has rate of escape $R$.  In particular the rate of escape is independent of the chosen lift $\tilde{x}$, and therefore if $R = 0$ then $v = 0 \in T_xM$ is a rotation vector for $x$.

On the other hand if $R > 0$, then $\beta$ is a non-constant geodesic and the image $v$ of the vector $R\beta'(0)$ under the differential of the covering map $\pi: \widetilde{M} \to M$ is independent of the chosen lift $\tilde{x}$.  To show that $v$ is a rotation vector for $x$ all that is needed is to prove that $d(\beta(Rn),F^n\tilde{x}) = o(n)$.  We can obtain this directly from $d(\tilde{x},F^n\tilde{x}) = Rn + o(n)$ and the fact that $\beta$ is a geodesic escort, as follows:
\[d(\beta(Rn),F^n\tilde{x}) \le d(\beta(Rn),\beta(d(\tilde{x},F^n\tilde{x}))) + d(\beta(d(\tilde{x},F^n\tilde{x})),F^n\tilde{x}) = o(n)\]
\end{proof}

\section{Aligned Sequences}
\label{geometric}

\begin{definition}[Linear Escape to Infinity]
A sequence $\{x_n\}_{n \ge 0} \subset X$ in a metric space $X$ is said to escape linearly to infinity if it has a positive and finite rate of escape.
\end{definition}

In this section we will give a condition under which a sequence that escapes linearly to infinity will be escorted by a geodesic.

The simplest such condition known to the author is the following (which we will state without proof), valid for $d$-dimensional hyperbolic space:
\begin{proposition}
Let $\mathbb{H}^d$ denote $d$-dimensional hyperbolic space.  Any sequence $\{x_n\}_{n \ge 0} \subset \mathbb{H}^d$ that escapes linearly to infinity and satisfies 
\[d(x_n,x_{n+1}) = o(n)\text{ when }n\to +\infty\]
is escorted by a unique geodesic.
\end{proposition}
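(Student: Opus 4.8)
The plan is to produce $\alpha$ as the geodesic ray from $x_0$ whose initial direction is the limit of the directions pointing from $x_0$ towards the $x_n$. Write $a_n = d(x_0,x_n)$ and let $R = \lim_n a_n/n$, which by hypothesis lies in $(0,+\infty)$; since $a_n \to +\infty$, no constant function can escort $\{x_n\}$, so a genuine geodesic ray is needed. As $\mathbb{H}^d$ is a Hadamard manifold, $\exp_{x_0}$ is a diffeomorphism and radial geodesics are minimizing, so for all large $n$ we may write $x_n = \exp_{x_0}(a_n u_n)$ with $u_n$ a unit tangent vector at $x_0$. The whole argument then reduces to showing that $u_n$ converges to some unit vector $u$, and fast enough; we will put $\alpha(t) = \exp_{x_0}(tu)$.

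First I would estimate the angular increments $\psi_n = \angle(u_n,u_{n+1})$. Applying the hyperbolic law of cosines to the geodesic triangle $x_0 x_n x_{n+1}$ gives
\[\cosh d(x_n,x_{n+1}) = \cosh a_n\cosh a_{n+1} - \sinh a_n\sinh a_{n+1}\cos\psi_n \ge \sinh a_n\sinh a_{n+1}\,(1-\cos\psi_n).\]
Using $\sinh t \ge \tfrac14 e^t$ for $t\ge 1$, $\cosh t \le e^t$, and $1-\cos\psi_n \ge c\,\psi_n^2$ for an absolute $c>0$ (legitimate since $\psi_n\to 0$), this rearranges to $\psi_n \le C\,e^{(d(x_n,x_{n+1})-a_n-a_{n+1})/2}$. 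Now $d(x_n,x_{n+1}) = o(n)$ by hypothesis while $a_n + a_{n+1} = 2Rn + o(n)$ by the definition of $R$, so $\psi_n \le e^{-Rn+o(n)}$; in particular $\sum_n\psi_n<+\infty$, so $(u_n)$ is Cauchy for the angular metric on the unit sphere of $T_{x_0}\mathbb{H}^d$ and converges to a unit vector $u$, and summing the same bound yields $\theta_n := \angle(u_n,u) \le \sum_{k\ge n}\psi_k \le e^{-Rn+o(n)}$ as well.

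It remains to verify that $\alpha(t) = \exp_{x_0}(tu)$ escorts $\{x_n\}$ and is the only geodesic doing so. Since $d(x_0,\alpha(a_n)) = a_n = d(x_0,x_n)$, the points $x_n$ and $\alpha(a_n)$ lie at equal distance $a_n$ along geodesics issuing from $x_0$ at angle $\theta_n$, and the law of cosines gives
\[\cosh d(x_n,\alpha(a_n)) = 1 + \sinh^2 a_n\,(1-\cos\theta_n) \le 1 + \tfrac18 e^{2a_n}\theta_n^2.\]
The decisive point is that $e^{2a_n}\theta_n^2 \le e^{2a_n - 2Rn + o(n)} = e^{o(n)}$ because $2a_n - 2Rn = o(n)$ \emph{exactly}; hence $\cosh d(x_n,\alpha(a_n)) \le 1 + \tfrac18 e^{o(n)}$, and using $\tfrac12 e^{d} \le \cosh d$ we obtain $d(x_n,\alpha(a_n)) \le \log\!\big(2 + \tfrac14 e^{o(n)}\big) = o(n)$, which is precisely the escort condition $d(x_n,\alpha(d(x_0,x_n))) = o(n)$. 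Uniqueness is the mirror of this computation: if $\beta$ is another geodesic escort starting at $x_0$, then $d(\alpha(a_n),\beta(a_n)) = o(n)$ by the triangle inequality, whereas if $\beta'(0)$ made a positive angle $\phi$ with $u$ the law of cosines together with $\sinh^2 a_n \ge \tfrac1{16}e^{2a_n}$ would force $d(\alpha(a_n),\beta(a_n)) \ge 2a_n + O(1) = 2Rn + o(n)$, a contradiction (since $R>0$); and $\beta$ cannot be constant since $a_n\to+\infty$. Hence $\beta=\alpha$.

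The step I expect to require the most care is the bookkeeping of the $o(n)$ terms across the two applications of the law of cosines: the argument hinges on the fact that the $o(n)$ hypothesis on consecutive distances, amplified by the exponential growth of $\sinh$ in $\mathbb{H}^d$, pins the angular increments to decay at precisely rate $R$, so that the a priori divergent factor $e^{2a_n}$ in the escort estimate is cancelled down to $e^{o(n)}$. This is also where negative curvature is genuinely used: in $\mathbb{R}^d$ a slowly winding spiral satisfies both hypotheses yet has no limiting direction, so the statement fails for general Hadamard manifolds.
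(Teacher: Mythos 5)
Your proof is correct, and it fills a genuine gap: the paper states this proposition explicitly ``without proof,'' so there is no argument in the text to compare against. The two applications of the hyperbolic law of cosines do exactly the right work: the first bounds the angular increment $\psi_n$ above by $Ce^{(d(x_n,x_{n+1})-a_n-a_{n+1})/2}\le e^{-Rn+o(n)}$, and that decay rate $R$ is cancelled exactly by the growth $e^{2a_n}=e^{2Rn+o(n)}$ in the escort estimate, leaving $e^{o(n)}$ and hence an $o(n)$ distance after taking logarithms. Two minor remarks that do not affect correctness: the inequality $1-\cos\psi\ge c\,\psi^2$ holds for all $\psi\in[0,\pi]$ (with $c=2/\pi^2$), so no appeal to $\psi_n\to 0$ is needed there; and $\sinh t\ge \tfrac{1}{4} e^t$ requires $t\ge 1$, which is harmless since $a_n\to+\infty$ and only finitely many $n$ are excluded.

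As for the paper's route: its general existence result for geodesic escorts, Lemma~\ref{geometriclemma}, holds in any Hadamard manifold but under the stronger \emph{alignment} hypothesis, formulated via $\epsilon$-cones and proved with the semi-parallelogram law. The hypothesis $d(x_n,x_{n+1})=o(n)$ does not imply alignment in a general Hadamard manifold --- the spiral $x_n=ne^{i\log n}\subset\mathbb{C}$ in the text is the counterexample --- so your proposition is not a corollary of that lemma. Your argument instead exploits the uniform exponential divergence of geodesics in constant curvature $-1$ to extract geometric convergence of the directions $u_n$ directly from the $o(n)$ bound on consecutive jumps, which is stronger than alignment. The direct route gives a short, explicit proof specific to $\mathbb{H}^d$; the paper's route buys generality at the price of a stronger hypothesis, and your closing observation that the hypotheses fail to suffice in $\mathbb{R}^d$ is exactly why the paper needs the alignment framework for its main theorem.
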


The condition $d(x_n,x_{n+1}) = o(n)$ is almost always satisfied by random sequences, provided that the variables $d(x_n,x_{n+1})$ are identically distributed and have finite expectation.  Using this fact one can obtain a proof of a special case of our main theorem.

However the sequence in $\mathbb{C}$ defined by $x_n = ne^{i\log(n)}$ satisfies $d(x_n,x_{n+1}) = o(n)$ (where $d(x,y) = |x-y|$ for every $x,y \in \mathbb{C}$) but isn't escorted by a geodesic.  This shows that the proposition is false for general non-positively curved spaces.

Since our objective is to prove the existence of geodesic escorts for sequences in Hadamard manifolds we will need a stronger hypothesis than $d(x_n,x_{n+1}) = o(n)$.  The following definitions will allow us to formulate such a hypothesis.

\begin{definition}[$\epsilon$-Cone]
Let $(X,d)$ be a metric space.  If $\epsilon \in [0,+\infty]$ and $x,y \in X$ the $\epsilon$-cone from $x$ to $y$ is defined as the following set:
\[[x,y]_\epsilon = \{z \in X: e^{-\epsilon}d(x,z) + d(z,y) \le d(x,y)\}\]
\end{definition}

The $0$-cone between two points in $\mathbb{C}$ is a segment.  The cone $[x,y]_{+\infty}$ is a closed disk centered at $y$ and containing $x$ (in particular note that the definition is not symmetric in $x$ and $y$).

\begin{figure}[htp]
\centering
\includegraphics[totalheight=0.3\textheight]{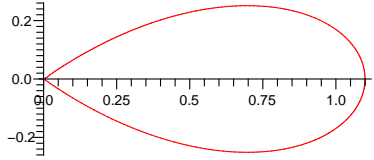}
\caption{The boundary of $[0,1]_{0.2}$ in $\mathbb{C}$.}
\end{figure}


We will now show that in Hadamard manifolds the $\epsilon$-cone between two points is close to a geodesic segment for small $\epsilon$.

We will rely on two properties of Hadamard manifolds which we state below without proof.  Lemma \ref{convexity} is contained in theorems 4.3 and 4.4 of chapter IX in \cite{MR1666820}.   While the semi-parallelogram law is proved to hold locally in section 3 of chapter XI of the same reference.  In the context of Hadamard manifolds the same proof gives the (global) statement below.

\begin{lemma}[Convexity of geodesic distance]
\label{convexity}
If $M$ is a Hadamard manifold then for any $x \in M$ and any pair of geodesics $\alpha,\beta:\mathbb{R} \to M$ the following two functions are convex:
\[t \mapsto d(x,\alpha(t))\]
\[t \mapsto d(\alpha(t),\beta(t))\]
\end{lemma}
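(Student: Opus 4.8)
The plan is to reduce the assertion to an elementary comparison estimate for geodesic triangles and then derive that estimate from the semi-parallelogram law, which the text recalls holds globally in a Hadamard manifold. First note that it suffices to prove the second statement: taking for $\beta$ the constant curve $\beta(t)\equiv x$ turns $t\mapsto d(\alpha(t),\beta(t))$ into $t\mapsto d(x,\alpha(t))$, and (as one checks below) a degenerate, constant curve causes no trouble. So fix geodesics $\alpha,\beta:\mathbb{R}\to M$ with their constant-speed parametrizations and put $g(t)=d(\alpha(t),\beta(t))$. The function $g$ is continuous, hence convex as soon as it is midpoint convex; and since the restriction of $\alpha$ to an interval $[s,t]$ is, after affine reparametrization, the unique geodesic from $\alpha(s)$ to $\alpha(t)$ (uniqueness of geodesics in a Hadamard manifold is recalled above) with midpoint $\alpha(\tfrac{s+t}{2})$, it is enough to establish
\[ g\!\left(\tfrac{s+t}{2}\right)\ \le\ \tfrac12\,g(s)+\tfrac12\,g(t)\qquad\text{for all }s<t. \]

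The geometric core is a \emph{midsegment estimate}: for $p,q,r\in M$, if $a$ and $b$ denote the midpoints of the geodesic segments $[p,q]$ and $[p,r]$, then $d(a,b)\le\tfrac12\,d(q,r)$. I would obtain this by applying the semi-parallelogram law twice. Writing that law as $d(z,y)^2\le\tfrac12 d(x_1,y)^2+\tfrac12 d(x_2,y)^2-\tfrac14 d(x_1,x_2)^2$ whenever $z$ is the midpoint of $x_1$ and $x_2$: apply it first with $z=b$ (midpoint of $[p,r]$) and $y=q$ to bound $d(q,b)^2$; then apply it with $z=a$ (midpoint of $[p,q]$) and $y=b$, substitute the first bound together with the identity $d(p,b)=\tfrac12 d(p,r)$, and simplify. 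All the $d(p,r)^2$ and $d(p,q)^2$ terms cancel, leaving $d(a,b)^2\le\tfrac14 d(q,r)^2$.

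With the midsegment estimate available, midpoint convexity of $g$ follows from one well-chosen auxiliary point. Given $s<t$, write $p_0=\alpha(s)$, $p_1=\alpha(t)$, $q_0=\beta(s)$, $q_1=\beta(t)$; let $a=\alpha(\tfrac{s+t}{2})$ and $b=\beta(\tfrac{s+t}{2})$ be the midpoints of $[p_0,p_1]$ and $[q_0,q_1]$; and let $m$ be the midpoint of the diagonal segment $[p_0,q_1]$. Applying the midsegment estimate in the triangle $p_0p_1q_1$ with $p_0$ as apex gives $d(a,m)\le\tfrac12 d(p_1,q_1)$, and applying it in the triangle $p_0q_0q_1$ with $q_1$ as apex gives $d(m,b)\le\tfrac12 d(p_0,q_0)$; the triangle inequality then gives $d(a,b)\le\tfrac12 d(p_0,q_0)+\tfrac12 d(p_1,q_1)$, which is exactly the required inequality for $g$. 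Together with the reduction of the first function to the case of constant $\beta$, this proves the lemma.

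I expect the one non-obvious point to be the choice of the auxiliary point $m$ as the midpoint of the diagonal $[p_0,q_1]$, rather than of a side of the quadrilateral $p_0p_1q_1q_0$ — this is precisely what lets one pass from $\alpha$ to $\beta$ through two triangles — together with the routine cancellation in the twofold use of the semi-parallelogram law. A purely Riemannian alternative would show that $t\mapsto d(x,\alpha(t))$ has non-negative second derivative, computed from the second-variation formula via non-positivity of the index form of the connecting Jacobi field (the corner where $\alpha$ meets $x$ being harmless, since there $d(x,\alpha(t))$ is a constant multiple of $|t-t_0|$), with the analogous computation for the second function, whose second-variation boundary terms vanish because $\alpha$ and $\beta$ are geodesics; but the route above uses only facts already set up in the text.
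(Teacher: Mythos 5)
Your argument is correct. The paper itself does not prove Lemma~\ref{convexity}: it simply cites Theorems 4.3 and 4.4 of Chapter IX of \cite{MR1666820}, whose route is Riemannian, via Jacobi fields and non-positivity of the index form along the connecting geodesics. You instead derive convexity from the semi-parallelogram law (Lemma~\ref{semiparallelogramlaw}) through the midsegment estimate $d(a,b)\le\tfrac12 d(q,r)$ and a well-chosen diagonal midpoint; this is the standard CAT(0) (Bruhat--Tits) argument. Your computations check out: in the midsegment step the $d(p,r)^2$ and $d(p,q)^2$ terms cancel on substitution, leaving $d(a,b)^2\le\tfrac14 d(q,r)^2$, and the two triangles $p_0p_1q_1$ (apex $p_0$) and $p_0q_0q_1$ (apex $q_1$) sharing the midpoint $m$ of $[p_0,q_1]$ give $d(a,b)\le\tfrac12 d(p_0,q_0)+\tfrac12 d(p_1,q_1)$, which is exactly midpoint convexity of $g$, and with continuity yields convexity. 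The reduction of the first function to the second via a degenerate constant $\beta$ is also harmless, as you note. What your route buys is self-containment and generality: it uses only a fact the paper already records, and it works verbatim in any CAT(0) metric space rather than just a Riemannian Hadamard manifold. What the cited Riemannian route buys is extra regularity (e.g.\ smoothness of the distance and second-variation information) unavailable to the purely metric argument. One small structural remark: the paper takes Lemmas~\ref{convexity} and~\ref{semiparallelogramlaw} as two independent facts from \cite{MR1666820}, whereas your proof makes the former a consequence of the latter, which is a cleaner dependency.
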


\begin{lemma}[Semi-Parallelogram Law]
\label{semiparallelogramlaw}
Let $M$ be a Hadamard manifold and let $x,y,z \in M$.  Then if $m$ is the midpoint of the geodesic segment $[x,y]_0$ the following inequality holds:
\[d(x,y)^2 + 4d(m,z)^2 \le 2d(x,z)^2 + 2d(z,y)^2\]
\end{lemma}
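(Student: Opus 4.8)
The plan is to reduce the inequality to a one–variable convexity estimate for the squared distance to $z$ along the segment $[x,y]_0$. Write $L = d(x,y)$ and let $\alpha\colon[0,L]\to M$ be the unit–speed geodesic with $\alpha(0)=x$ and $\alpha(L)=y$, so that $m=\alpha(L/2)$. The key claim is that the function
\[g(t) \;=\; d\big(z,\alpha(t)\big)^2 - t^2\]
is convex on $[0,L]$; equivalently, that $\tfrac{d^2}{dt^2}\, d(z,\alpha(t))^2 \ge 2$. Granting this, midpoint convexity gives $g(L/2)\le \tfrac12 g(0)+\tfrac12 g(L)$, which reads $d(z,m)^2 - L^2/4 \le \tfrac12 d(z,x)^2 + \tfrac12\big(d(z,y)^2 - L^2\big)$; multiplying by $4$ and transposing the $L^2$–terms yields exactly $d(x,y)^2 + 4d(m,z)^2 \le 2d(x,z)^2 + 2d(z,y)^2$. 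So all of the content sits in the convexity of $g$, and the rest is bookkeeping.

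To prove that $g''\ge 0$ I would argue as follows. Since $M$ is a Hadamard manifold, $\exp_z\colon T_zM\to M$ is a diffeomorphism (as already recalled), so $p\mapsto d(z,p)^2 = |\exp_z^{-1}(p)|^2$ is smooth on all of $M$ — in particular $E(t):=d(z,\alpha(t))^2$ is a genuine $C^\infty$ function of $t$, even if $\alpha$ passes through $z$ — and "convex" here means "$E''\ge 2$". Now run the second–variation computation for the geodesic variation $V(t,u)=\sigma_t(u)$, where $\sigma_t\colon[0,1]\to M$ is the affinely parametrized geodesic from $z$ to $\alpha(t)$. Each $\sigma_t$ has constant speed $d(z,\alpha(t))$, so $E(t)$ is the energy $\int_0^1 |\partial_u V(t,u)|^2\,du$ of $\sigma_t$; differentiating twice in $t$ and using that the $\sigma_t$ are geodesics together with $\partial_t V(t,0)=0$ and $\partial_t V(t,1)=\alpha'(t)$, one gets
\[\tfrac12 E''(t) \;=\; I(J_t,J_t) \;=\; \int_0^1 \Big(|D_u J_t|^2 - \big\langle R(J_t,\dot\sigma_t)\dot\sigma_t,\,J_t\big\rangle\Big)\,du,\]
where $J_t(u)=\partial_t V(t,u)$ is the Jacobi field along $\sigma_t$ with $J_t(0)=0$ and $J_t(1)=\alpha'(t)$. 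Non-positive sectional curvature makes the curvature term $\le 0$, so $\tfrac12 E''(t)\ge\int_0^1|D_uJ_t|^2\,du$; and parallel-transporting $J_t(u)$ along $\sigma_t$ to the fixed space $T_{\alpha(t)}M$ gives a curve whose derivative has norm $|D_uJ_t(u)|$ and which runs from $0$ to the unit vector $\alpha'(t)$, so by Cauchy–Schwarz $1=|\alpha'(t)|\le\int_0^1|D_uJ_t|\,du\le\big(\int_0^1|D_uJ_t|^2\,du\big)^{1/2}$. Hence $E''(t)\ge 2$, as wanted. (Equivalently, one may quote the Hessian comparison theorem — on a manifold of non-positive sectional curvature the Hessian of $d(z,\cdot)^2$ is everywhere at least twice the Riemannian metric — and restrict this bilinear form to $\alpha'$.)

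The main obstacle is precisely this last estimate — equivalently the Hessian comparison — since it is the only place where the curvature hypothesis enters, and it is the genuine geometric input (the flat model already gives equality in $g''\ge 0$). A caveat worth flagging: the earlier Lemma on convexity of geodesic distance asserts only that $t\mapsto d(z,\alpha(t))$ is convex, which by itself is \emph{not} enough; the semi-parallelogram law requires the quantitative strengthening $\tfrac{d^2}{dt^2}d(z,\alpha(t))^2\ge 2$, so that lemma cannot simply be invoked. Two alternative routes are available — prove the local semi-parallelogram law on each arc of a fine subdivision of $\alpha$ (as in the cited reference) and patch the pieces together using that geodesics are globally minimizing in $M$, or use Rauch comparison to obtain the $\mathrm{CAT}(0)$ inequality $d(z,m)\le|\bar z-\bar m|$ for a Euclidean comparison triangle and combine it with the Euclidean median identity — but since all three routes ultimately rest on the same Jacobi-field comparison, I would favour the direct computation above.
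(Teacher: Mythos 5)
Your argument is correct, and the reduction to $g(t)=d(z,\alpha(t))^2-t^2$ being convex is exactly the right thing to isolate; the boundary terms in the second variation do vanish because $V(t,0)\equiv z$ and $D_t\partial_tV(t,1)=D_t\alpha'(t)=0$, the curvature term has the right sign under $K\le 0$, and the Cauchy--Schwarz step (using that the parameter interval has length $1$) cleanly forces $\int_0^1|D_uJ_t|^2\,du\ge 1$, so $E''\ge 2$ and the midpoint-convexity bookkeeping gives the stated inequality. Your remark that the paper's Lemma~\ref{convexity} (convexity of $t\mapsto d(z,\alpha(t))$) is too weak on its own is also correct: $(f')^2+ff''\ge 1$ does not follow from $f''\ge 0$ and $|f'|\le 1$ alone, so the quantitative Hessian bound really is needed. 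Where your write-up differs from the paper is in intent rather than content: the paper does not actually prove this lemma -- it states it and defers to Lang (section 3 of chapter XI of \cite{MR1666820}), observing that the local proof there globalizes on a Hadamard manifold because geodesics are globally minimizing. Your second-variation/Jacobi-field argument is thus a genuine, self-contained replacement for that citation, and it is standard; it is essentially the Hessian comparison route you already mention as an equivalent, and it avoids the subdivide-and-patch step that the ``globalize the local statement'' route requires. One cosmetic point: when $\alpha(t_0)=z$ the field $J_{t_0}$ degenerates, but since you have already observed that $E$ is smooth everywhere (because $\exp_z$ is a global diffeomorphism), the bound $E''\ge 2$ on the open dense set where $\alpha\ne z$ extends by continuity, so nothing is lost.
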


\begin{lemma}
\label{epsilonconelemma}
Let $M$ be a Hadamard manifold.  For all $\epsilon > 0$ and all $x,y,z \in M$ with $z \in [x,y]_\epsilon$ it holds that:
\[d(z,w)^2 \le 4(1- e^{-2\epsilon})d(x,z)^2\]
where $w = \alpha(d(x,z))$ and $\alpha:[0,+\infty) \to M$ is the unique geodesic parametrized by arclength with $\alpha(0) = x$ and $\alpha(d(x,y)) = y$.
\end{lemma}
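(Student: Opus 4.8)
The plan is to reduce the inequality to a lower bound for $d(x,m)$, where $m$ is the midpoint of the geodesic segment joining $w$ and $z$, and then to extract that lower bound from the $\epsilon$-cone hypothesis using convexity of the distance function. Throughout write $a=d(x,z)$, $b=d(z,y)$ and $c=d(x,y)$; since $\alpha$ is an arclength-parametrized geodesic ray in a Hadamard manifold it is globally minimizing, so $d(x,w)=a$ and $d(w,y)=|c-a|$.

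The crucial observation is that $w$ and $z$ are equidistant from $x$ (both lie at distance $a$). Hence, letting $m$ be the midpoint of the geodesic segment $[w,z]_0$ and applying the Semi-Parallelogram Law (Lemma~\ref{semiparallelogramlaw}) to the points $w$, $z$ and the auxiliary point $x$, the right-hand side collapses:
\[ d(w,z)^2 + 4\,d(m,x)^2 \le 2\,d(w,x)^2 + 2\,d(z,x)^2 = 4a^2. \]
Therefore $d(w,z)^2 \le 4a^2 - 4\,d(x,m)^2$, and it is enough to prove that $d(x,m)\ge e^{-\epsilon}a$.

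To bound $d(x,m)$ from below I would use $d(x,m)\ge d(x,y)-d(y,m)=c-d(y,m)$ together with an upper bound on $d(y,m)$. The latter comes from the convexity of $t\mapsto d(y,\gamma(t))$ along the geodesic $\gamma$ from $w$ to $z$ (Lemma~\ref{convexity}): the value at the midpoint is at most the average of the endpoint values, so $d(y,m)\le\frac12 d(y,w)+\frac12 d(y,z)=\frac12|c-a|+\frac12 b$, whence $d(x,m)\ge c-\frac12|c-a|-\frac12 b$. Finally I would feed in the hypothesis: $z\in[x,y]_\epsilon$ gives $b\le c-e^{-\epsilon}a$, and combining this with the triangle inequality $b\ge a-c$ also produces the auxiliary bound $a\le 2c/(1+e^{-\epsilon})$. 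A short case distinction according to whether $a\le c$ (so $|c-a|=c-a$) or $a>c$ (so $|c-a|=a-c$, the case where the auxiliary bound is used), together with the elementary inequality $1+e^{-\epsilon}\ge 2e^{-\epsilon}$, then yields $d(x,m)\ge e^{-\epsilon}a$ in both cases; substituting back gives $d(w,z)^2\le 4a^2-4e^{-2\epsilon}a^2=4(1-e^{-2\epsilon})d(x,z)^2$.

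I expect the only genuine difficulty to be discovering this configuration — applying the Semi-Parallelogram Law with the midpoint of $[w,z]_0$ and with $x$ (not $y$) as the third vertex, so that the equidistance $d(x,w)=d(x,z)=a$ forces the right-hand side to be exactly $4a^2$. Once that is in place the remainder is a routine sequence of triangle inequalities, a single convexity estimate, and one-variable arithmetic; the only bookkeeping nuisance is the split into the cases $a\le c$ and $a>c$ (geometrically, whether $w$ lies on the segment $[x,y]_0$ or beyond $y$).
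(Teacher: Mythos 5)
Your proposal is correct and follows essentially the same route as the paper: both apply the Semi-Parallelogram Law to $w$, $z$ and the auxiliary vertex $x$ (exploiting $d(x,w)=d(x,z)$ so the right-hand side collapses to $4a^2$), reducing the claim to $d(x,m)\ge e^{-\epsilon}d(x,z)$, which is then obtained from $d(x,m)\ge d(x,y)-d(y,m)$ and a convexity bound on $d(y,m)$. The one place you can streamline is the convexity bound: rather than the midpoint-average estimate $d(y,m)\le\tfrac12 d(y,w)+\tfrac12 d(y,z)$, which is what forces your case split on $a\lessgtr c$ and the auxiliary inequality $a\le 2c/(1+e^{-\epsilon})$, the paper uses the coarser bound $d(y,m)\le\max\{d(y,w),d(y,z)\}=d(y,z)$ (the max is $d(y,z)$ since $d(y,w)=|d(x,y)-d(x,z)|\le d(y,z)$ by the triangle inequality). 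This gives $d(x,m)\ge d(x,y)-d(y,z)\ge e^{-\epsilon}d(x,z)$ immediately from the $\epsilon$-cone hypothesis, with no case distinction at all.
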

\begin{proof}
Let $a = d(x,z) = d(x,w), b = d(x,y)-a$ and $c = d(z,y)$.  Notice that $b = d(w,y)$ if $d(x,z) < d(x,y)$ and $b = -d(w,y)$ otherwise.  In both cases $|b| \le c$.

Since $z \in [x,y]_\epsilon$ we have:
\[e^{-\epsilon}a + c \le a+b\]
which implies
\[a+b-c \ge e^{-\epsilon}a\]

Let $m$ be the midpoint of the segment $[z,w]_0$.  Lemma \ref{convexity} implies that:
\[d(y,m) \le \max(|b|,c) = c\]
and from this we obtain
\[d(x,m) \ge d(x,y) - d(y,m) \ge a+b-c \ge e^{-\epsilon}a\]

The semi-parallelogram law (lemma \ref{semiparallelogramlaw}) now gives:
\[d(w,z)^2 + 4d(x,m)^2 \le 4a^2\]

Combining these inequalities we obtain:
\[d(w,z)^2 \le 4(a^2 - d(x,m)^2) \le 4(1 - e^{-2\epsilon})a^2 = 4(1 - e^{-2\epsilon})d(x,z)^2\]
\end{proof}

We will now state a condition that will guarantee the existence of a geodesic escort for a sequence that escapes linearly to infinity.  One can interpret the condition saying either that the sequence eventually stays in arbitrarily small $\epsilon$-cones, or that the differences $d(x_0,x_n)-d(x_k,x_n)$ are close to their largest possible value (i.e. $d(x_0,x_k)$).

\begin{definition}[Aligned Sequence]
Let $(X,d)$ be a metric space. A sequence $\{x_n\}_{n\ge 0} \subset X$ is said to be aligned if for each $\epsilon > 0$ there exists $K \in \mathbb{N}$ such that for infinitely many $n \in \mathbb{N}$ the following holds:
\[x_k \in [x_0,x_n]_{\epsilon}\text{ for all }K \le k \le n\]
\end{definition}

\begin{observation}
\label{geometriclemmaobservation}
Let $(X,d)$ be a metric space and $\{x_n\}_{n \ge 0} \subset X$ a sequence that escapes linearly to infinity with rate of escape $R$.  Then $\{x_n\}_{n \ge 0}$ is aligned if and only if $L \ge R$ where:
\[L = \lim_{K \to +\infty}\limsup_{n \to +\infty}\min_{K \le k \le n}\{\frac{d(x_0,x_n)-d(x_n,x_k)}{k}\}\]
\end{observation}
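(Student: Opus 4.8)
The plan is to reduce everything to elementary manipulations of the ratios $q_{k,n}:=\big(d(x_0,x_n)-d(x_n,x_k)\big)/k$, since both the quantity $L$ and the definition of alignment refer only to these; beyond the triangle inequality, no property of $X$ is needed.

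First I would record two preliminaries. Since $\min_{K\le k\le n}q_{k,n}$ is non-decreasing in $K$, so is $\ell_K:=\limsup_{n}\min_{K\le k\le n}q_{k,n}$; hence $L=\lim_{K}\ell_K=\sup_K\ell_K$ exists in $(-\infty,+\infty]$. Taking $k=K$ and using $d(x_0,x_n)-d(x_n,x_K)\le d(x_0,x_K)$ gives $\ell_K\le d(x_0,x_K)/K$, so $L\le\lim_K d(x_0,x_K)/K=R$; thus the equivalence to be proved is the same as ``$\{x_n\}$ is aligned $\iff L=R$''. I would also note that alignment is monotone in its threshold: if $x_k\in[x_0,x_n]_\epsilon$ holds for all $K\le k\le n$, then it holds for all $K'\le k\le n$ whenever $K'\ge K$, so in the definition of aligned the integer $K$ may always be enlarged. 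The key point is that, unwinding the definition of the $\epsilon$-cone, $x_k\in[x_0,x_n]_\epsilon$ is the same as $q_{k,n}\ge e^{-\epsilon}\,d(x_0,x_k)/k$.

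For the implication aligned $\Rightarrow L\ge R$: given $\epsilon>0$ and $\delta>0$, I would use the monotonicity above to pick a threshold $K$ witnessing alignment for $\epsilon$ that is in addition large enough that $d(x_0,x_k)/k\ge R-\delta$ for all $k\ge K$; then for the infinitely many admissible $n$, $q_{k,n}\ge e^{-\epsilon}(R-\delta)$ for all $K\le k\le n$, so $\ell_K\ge e^{-\epsilon}(R-\delta)$, whence $L\ge e^{-\epsilon}(R-\delta)$, and letting $\delta\to0$ and then $\epsilon\to0$ yields $L\ge R$. For the converse, assume $L=R$; given $\epsilon>0$, choose $\delta\in(0,R)$ with $\tfrac{R-\delta}{R+\delta}\ge e^{-\epsilon}$ (possible since $R>0$), then pick $K$ large enough that $\ell_K>R-\delta$ (using $L=R$ and monotonicity of $\ell_K$) and $d(x_0,x_k)/k\le R+\delta$ for all $k\ge K$. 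For the infinitely many $n$ with $\min_{K\le k\le n}q_{k,n}>R-\delta$ one then has, for every $K\le k\le n$,
\[q_{k,n}>R-\delta\ge\frac{R-\delta}{R+\delta}\cdot\frac{d(x_0,x_k)}{k}\ge e^{-\epsilon}\,\frac{d(x_0,x_k)}{k},\]
which is precisely $x_k\in[x_0,x_n]_\epsilon$; hence $\{x_n\}$ is aligned.

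The one point requiring care --- the ``obstacle'', such as it is --- is the mismatch between the existentially quantified threshold $K$ in the definition of aligned and the limit over $K$ in the definition of $L$. This is resolved exactly by the monotonicity of alignment in the threshold, which lets $K$ be chosen simultaneously large enough for the uniform estimate $d(x_0,x_k)/k\approx R$ on $[K,\infty)$ to be used. Everything else is routine estimation with the triangle inequality and the hypothesis $0<R<+\infty$.
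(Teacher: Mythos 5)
Your proof is correct and complete. The paper states this as an \emph{Observation} without supplying a proof, and your argument is exactly the unwinding that is left implicit: translating $x_k\in[x_0,x_n]_\epsilon$ into $q_{k,n}\ge e^{-\epsilon}\,d(x_0,x_k)/k$, noting that the triangle inequality forces $L\le R$ (so ``$L\ge R$'' reads as ``$L=R$''), and using the monotonicity of both the alignment threshold and $\ell_K$ in $K$ so that the existentially quantified $K$ in the alignment definition can be coupled with the $d(x_0,x_k)/k\to R$ estimate; the hypothesis $0<R<\infty$ from linear escape is invoked exactly where it is needed, in choosing $\delta$ with $(R-\delta)/(R+\delta)\ge e^{-\epsilon}$.
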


\begin{lemma}
\label{geometriclemma}
Let $H$ be a Hadamard manifold.  If $\{x_n\}_{n \ge 0} \subset H$ escapes linearly to infinity and is aligned then it is escorted by a geodesic.
\end{lemma}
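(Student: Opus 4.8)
The plan is to produce the escorting geodesic as a limit of the geodesic segments $[x_0,x_n]_0$, using the alignment hypothesis together with Lemma~\ref{epsilonconelemma} to control how far points of the sequence stray from these segments. First I would fix $\epsilon>0$ and, using that the sequence is aligned, choose $K=K(\epsilon)$ and an infinite set of indices $n$ for which $x_k\in[x_0,x_n]_\epsilon$ whenever $K\le k\le n$. For each such $n$ let $\alpha_n:[0,+\infty)\to H$ be the arclength geodesic ray from $x_0$ through $x_n$ (well defined and a global isometry onto its image, by the facts recalled before Lemma~\ref{vectorsvrsescorts}). Lemma~\ref{epsilonconelemma} then gives, for every $k$ with $K\le k\le n$,
\[
d\bigl(x_k,\alpha_n(d(x_0,x_k))\bigr)^2 \le 4(1-e^{-2\epsilon})\,d(x_0,x_k)^2,
\]
so $x_k$ lies within $2\sqrt{1-e^{-2\epsilon}}\,d(x_0,x_k)$ of the ray $\alpha_n$. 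Since the rate of escape is $R>0$, $d(x_0,x_k)\sim Rk$, so this bound is $o(k)$ relative to... more precisely it is of order $\sqrt{\epsilon}\,k$, which is the key smallness we exploit once $\epsilon$ is taken small.

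Next I would extract a limiting ray. The initial directions $\alpha_n'(0)\in T_{x_0}H$ lie in the unit sphere, which is compact, so along a subsequence $\alpha_n'(0)\to u$ for some unit vector $u$; let $\alpha$ be the geodesic ray from $x_0$ with $\alpha'(0)=u$. By convexity of the geodesic distance (Lemma~\ref{convexity}), for any fixed $t$ the quantity $d(\alpha_n(t),\alpha(t))$ is controlled by $t\cdot\|\alpha_n'(0)-u\|$-type estimates and hence $\alpha_n\to\alpha$ uniformly on compact subsets of $[0,+\infty)$. Passing to the limit in the displayed inequality (for fixed $k\ge K$, all large $n$ in our subsequence satisfy $k\le n$) yields
\[
d\bigl(x_k,\alpha(d(x_0,x_k))\bigr) \le 2\sqrt{1-e^{-2\epsilon}}\;d(x_0,x_k)\qquad\text{for all }k\ge K(\epsilon).
\]
Thus for the fixed $\epsilon$ we get a geodesic ray $\alpha^{(\epsilon)}$ which $\epsilon$-escorts the tail of the sequence in this quantitative sense.

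The remaining, and I expect main, difficulty is to pass from "for each $\epsilon$ there is a ray that tracks the sequence up to error $\sqrt{\epsilon}\,k$" to "there is a \emph{single} ray $\alpha$ with $d(x_k,\alpha(d(x_0,x_k)))=o(k)$." The natural approach is to show the initial directions $u^{(\epsilon)}$ form a Cauchy net as $\epsilon\to 0$: if $\alpha^{(\epsilon)}$ and $\alpha^{(\epsilon')}$ both track $x_k$ to within $C\sqrt{\epsilon}\,k$ (with $C$ absolute) for all large $k$, then by the triangle inequality $d(\alpha^{(\epsilon)}(Rk),\alpha^{(\epsilon')}(Rk))\le C'\sqrt{\epsilon}\,k$ for large $k$ (after matching parameters, using $d(x_0,x_k)=Rk+o(k)$ and that geodesics move at unit speed); convexity of $t\mapsto d(\alpha^{(\epsilon)}(t),\alpha^{(\epsilon')}(t))$ forces this distance to grow at least linearly with a slope equal to $\|u^{(\epsilon)}-u^{(\epsilon')}\|$ unless the rays are asymptotic, and comparing slopes gives $\|u^{(\epsilon)}-u^{(\epsilon')}\|\le C'\sqrt{\epsilon}/R$. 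Hence $u^{(\epsilon)}\to u^{*}$ as $\epsilon\to 0$, and the ray $\alpha$ from $x_0$ with $\alpha'(0)=u^{*}$ satisfies, for each $\epsilon$ and all large $k$, $d(x_k,\alpha(d(x_0,x_k)))\le d(x_k,\alpha^{(\epsilon)}(d(x_0,x_k)))+d(\alpha^{(\epsilon)}(d(x_0,x_k)),\alpha(d(x_0,x_k)))\le C\sqrt{\epsilon}\,k + o(k)$; letting $k\to\infty$ and then $\epsilon\to 0$ gives $\limsup_k d(x_k,\alpha(d(x_0,x_k)))/k=0$, i.e. $\alpha$ is a geodesic escort. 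The delicate points to get right are the reparametrization bookkeeping (the escort condition uses $\alpha(d(x_0,x_k))$, not $\alpha(Rk)$) and making the convexity-of-$d(\alpha(t),\beta(t))$ argument yield a genuine lower bound on the slope in terms of $\|\alpha'(0)-\beta'(0)\|$, for which one compares the two rays on a large interval $[0,T]$ where the linear behavior dominates the $o(T)$ errors.
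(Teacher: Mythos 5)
Your strategy is correct, but it takes a genuinely different organizational route from the paper's proof, and there are a couple of imprecise steps worth flagging.

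\emph{Comparison with the paper.} The paper never introduces per-$\epsilon$ limit rays. Instead, with $d_n = d(x_0,x_n)$ and $\alpha_n$ the arclength ray through $x_0,x_n$, it proves directly that the full sequence $\alpha_n(1)$ is Cauchy: given $k,l \ge K_\epsilon$ it picks a single $\epsilon$-admissible $n \ge \max(k,l)$ and triangulates
\[
d(\alpha_k(1),\alpha_l(1)) \le d(\alpha_k(1),\alpha_n(1)) + d(\alpha_n(1),\alpha_l(1)) \le \frac{d(x_k,\alpha_n(d_k))}{d_k} + \frac{d(\alpha_n(d_l),x_l)}{d_l} \le 2f(\epsilon),
\]
where the middle inequality is just convexity of $t\mapsto d(\alpha_k(t),\alpha_n(t))$ together with $d_k,d_l\ge 1$. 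This yields a single limit ray $\alpha$ with no extraction of subsequences and no Cauchy-net-over-$\epsilon$ argument; one then passes to the limit in $d(\alpha_{n}(d_k),x_k)\le f(\epsilon)d_k$ along $\epsilon$-admissible $n$. Your version introduces an intermediate family $\alpha^{(\epsilon)}$ and handles the double limit $k\to\infty$, $\epsilon\to 0$ explicitly; it works, but the paper's single Cauchy argument is cleaner and also shows the escort is canonically the limit of the chords $\alpha_n$, not just of some subsequence.

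\emph{Two imprecisions in your write-up.} First, the claim that $\alpha_n\to\alpha^{(\epsilon)}$ uniformly on compacta because convexity gives a bound $d(\alpha_n(t),\alpha(t))\lesssim t\,\|\alpha_n'(0)-u\|$ is backwards: in non-positive curvature the exponential map is distance \emph{non-decreasing}, so this gives a lower bound, not an upper bound. The convergence you want is true, but it comes from continuity of $\exp_{x_0}$ (as in the paper's footnote via Arzel\`a--Ascoli), not from Lemma~\ref{convexity}. Second, the appeal to ``slope equal to $\|u^{(\epsilon)}-u^{(\epsilon')}\|$'' is not a consequence of convexity alone and is in fact unnecessary: since both rays issue from $x_0$, the function $t\mapsto d(\alpha^{(\epsilon)}(t),\alpha^{(\epsilon')}(t))$ is convex and vanishes at $0$, so $d(\alpha^{(\epsilon)}(1),\alpha^{(\epsilon')}(1)) \le d(\alpha^{(\epsilon)}(d_k),\alpha^{(\epsilon')}(d_k))/d_k \le 2f(\epsilon)$ directly, without passing through tangent-vector norms. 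With these adjustments your argument is sound. You should also note that the subsequential limit $u^{(\epsilon)}$ is not unique \emph{a priori}; it doesn't matter because the same estimate shows any two such limits for the same $\epsilon$ are within $2f(\epsilon)$ of each other, but this deserves a sentence.
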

\begin{proof}
Let $f:[0,+\infty) \to [0,+\infty)$ be given by $f(\epsilon) = 2\sqrt{1- e^{-2\epsilon}}$.

For each $n \in \mathbb{N}$ define $d_n = d(x_0,x_n)$ and let $\alpha_n: [0,+\infty) \to H$ be the unique geodesic parametrized by arclength such that $\alpha_n(0) = x_0$ and $\alpha_n(d_n) = x_n$.  Also, let $R  > 0$ be the rate of escape of the sequence $\{x_n\}_{n \ge 0}$.

By hypothesis for each $\epsilon > 0$ there is a natural number $K_{\epsilon}$ and infinitely many values of $n$ (which we will call $\epsilon$-admissible) such that:
\[x_k \in [x_0,x_n]_\epsilon\ \text{ for all } K_{\epsilon} \le k \le n\]

Since $d_k = Rk + o(k)$ we may assume that $K_{\epsilon}$ above is chosen large enough so that $d_k \ge 1$ for all $k \ge K_{\epsilon}$.

Also, note that by lemma \ref{epsilonconelemma} we have the following inequality for all $\epsilon$-admissible $n$ and all $k$ with $K_{\epsilon} \le k \le n$:
\[d(\alpha_n(d_k),x_k) \le f(\epsilon)d_k\]

We will prove the lemma in two steps:  First, we will show that $\{\alpha_n(1)\}_{n \ge 0}$ is a Cauchy sequence.  Second, defining $\alpha$ to be the geodesic ray with $\alpha(0) = x_0$ and $\alpha(1) = \lim_{n \to +\infty}\alpha_n(1)$ we will show that $\alpha$ is a geodesic escort for $\{x_n\}_{n \ge 0}$.

To prove the first claim fix $\epsilon > 0$ and the corresponding $K_{\epsilon}$.  For any $k,l \ge K_{\epsilon}$ we can find an $\epsilon$-admissible $n \ge \max(k,l)$ and for this value of $n$ one has:
\begin{align*}
d(\alpha_k(1),\alpha_l(1)) &\le d(\alpha_k(1),\alpha_n(1)) + d(\alpha_n(1),\alpha_l(1))
\\ &\le \frac{d(x_k,\alpha_n(d_k))}{d_k} + \frac{d(\alpha_n(d_l),x_l)}{d_l}
\\ &\le 2f(\epsilon)
\end{align*}
where the second inequality is obtained by applying lemma \ref{convexity} to $\alpha_n$ and either $\alpha_k$ (in the case of the first summand) or $\alpha_l$ (for the second).  Here we have used the fact that $d_k,d_l \ge 1$.

As $\epsilon$ can be chosen so that $f(\epsilon)$ is arbitrarily small the above argument shows that $\alpha_n(1)$ is a Cauchy sequence and therefore $\lim_{n \to +\infty}\alpha_n(1)$ exists.  Therefore one can define a geodesic ray $\alpha$ as claimed above.

We will now show that $\alpha$ is a geodesic escort for $\{x_n\}_{n \ge 0}$.  

Since $\alpha(0) = \alpha_n(0)$ for all $n$ and $\alpha(1) = \lim_{n \to +\infty}\alpha_n(1)$ it follows that
\[\alpha(t) = \lim_{n \to +\infty}\alpha_n(t)\]
for all $t$\begin{footnote}{By the Arsel\`a-Ascoli theorem any subsequence of $\alpha_n$ has a subsequence converging uniformly on compact sets.  If $\beta$ is the limit of such a subsequence then it must be a geodesic joining $\alpha(0)$ and $\alpha(1)$ and it follows that $\alpha = \beta$.}\end{footnote}.

Given $\epsilon > 0$ and the corresponding $K_{\epsilon}$ for each $k \ge K_{\epsilon}$ one can choose a sequence $n_l \to +\infty$ of $\epsilon$-admissible numbers.  For this sequence we obtain:
\begin{align*}
d(\alpha(d_k),x_k) = \lim_{l \to +\infty}d(\alpha_{n_l}(d_k),x_k) \le f(\epsilon)d_k
\end{align*}
for all $k \ge K_{\epsilon}$.

Hence for each $\epsilon > 0$ one has $d(\alpha(d_k),x_k) \le f(\epsilon)d_k$ for all $k \ge K_{\epsilon}$.  This shows that:
\[d(\alpha(d_k),x_k) = o(d_k)\text{ when }k \to +\infty\]
so that $\alpha$ is a geodesic escort as claimed.
\end{proof}

We conclude this section by showing that if an orbit of an isometry or semi-contraction escapes linearly to infinity, then it is aligned.  This result isn't necessary for the proof of our main theorem, but will be used later on as a source of examples.

\begin{lemma}
\label{alignmentoforbitsofsemicontractions}
Let $(X,d)$ be a metric space and $\{x_n\}_{n \ge 0} \subset X$ a sequence that escapes linearly to infinity and satisfies:
\[d(x_{m+k},x_{n+k}) \le d(x_m,x_n) \text{ for all } k,m,n \ge 0\]

Then $\{x_n\}_{n \ge 0}$ is aligned.
\end{lemma}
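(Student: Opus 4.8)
The plan is to route the argument through Observation \ref{geometriclemmaobservation}. Since $\{x_n\}_{n\ge 0}$ escapes linearly to infinity, say with rate $R>0$, the observation tells us that the sequence is aligned if and only if $L\ge R$, where
\[L = \lim_{K \to +\infty}\limsup_{n \to +\infty}\min_{K \le k \le n}\frac{d(x_0,x_n)-d(x_n,x_k)}{k}.\]
So it suffices to prove $L\ge R$. Throughout, write $d_n = d(x_0,x_n)$, so that $d_n = Rn + o(n)$ by the definition of the rate of escape.

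I would extract two ingredients. First, the semi-contraction hypothesis, applied with $m=0$ and with $n$ replaced by $n-k$, gives $d(x_k,x_n) = d(x_{0+k},x_{(n-k)+k}) \le d(x_0,x_{n-k}) = d_{n-k}$ whenever $0\le k\le n$. Second, a ``record'' argument on a tilted sequence: fix $\delta\in(0,R)$ and set $b_n = d_n - (R-\delta)n$. Then $b_n = \delta n + o(n) \to +\infty$, so the running maximum $M_n = \max_{0\le j\le n}b_j$ is nondecreasing and unbounded, hence strictly increases for infinitely many $n$; at each such $n$ one has $b_n = M_n$, i.e. $d_n - d_j \ge (R-\delta)(n-j)$ for all $0\le j\le n$. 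Call such an $n$ a $\delta$-record; there are infinitely many of them.

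Now combine the two ingredients: if $n$ is a $\delta$-record and $1\le k\le n$, taking $j=n-k$ gives $d_n - d_{n-k}\ge (R-\delta)k$, and using $d(x_k,x_n)\le d_{n-k}$ we obtain
\[d_n - d(x_n,x_k) \ge d_n - d_{n-k} \ge (R-\delta)k,\]
so $\dfrac{d_n-d(x_n,x_k)}{k}\ge R-\delta$ for all $1\le k\le n$. Hence, for every fixed $K$, $\min_{K\le k\le n}\dfrac{d_n-d(x_n,x_k)}{k}\ge R-\delta$ for every $\delta$-record $n\ge K$; since there are infinitely many such $n$, the inner $\limsup$ is at least $R-\delta$. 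As this holds for every $K$, we get $L\ge R-\delta$, and letting $\delta\downarrow 0$ yields $L\ge R$, which by Observation \ref{geometriclemmaobservation} means $\{x_n\}_{n\ge 0}$ is aligned.

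There is no genuinely hard step here once the two ingredients are isolated; the only point needing a moment's care is the existence of infinitely many $\delta$-records, which is the elementary fact that a real sequence tending to $+\infty$ attains its running maximum infinitely often. (If one prefers not to invoke Observation \ref{geometriclemmaobservation}, one can argue directly: given $\epsilon>0$, pick $\delta$ with $0<\delta<R(1-e^{-\epsilon})$, so that $e^{-\epsilon}d_k \le (R-\delta)k$ for all $k$ beyond some $K=K(\epsilon)$; then for every $\delta$-record $n\ge K$ and all $K\le k\le n$ the displayed inequality gives $e^{-\epsilon}d(x_0,x_k)+d(x_k,x_n)\le d(x_0,x_n)$, i.e. $x_k\in[x_0,x_n]_\epsilon$, which is exactly the definition of aligned.)
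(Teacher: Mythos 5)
Your proof is correct and rests on the same two ingredients as the paper's: the translated semi-contraction inequality $d(x_k,x_n)\le d(x_0,x_{n-k})$, and a record argument on a tilted sequence $d(x_0,x_n)-cn$ (the paper takes $c=e^{-\epsilon/2}R$, you take $c=R-\delta$; the choices are interchangeable). Your parenthetical direct argument is essentially the paper's proof verbatim, and your primary route through Observation \ref{geometriclemmaobservation} is just a repackaging of the same computation, so there is no substantive divergence.
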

\begin{proof}
Let $R$ be the rate of escape of $\{x_n\}$. 

For each $\epsilon > 0$ take $K$ such that:
\[d(x_0,x_k) \le e^{\epsilon/2}Rk\text{ for all } k \ge K\]

Since the sequence $d(x_0,x_n) - e^{-\epsilon/2}Rn$ is unbounded there exist infinitely many $n$ such that:
\[d(x_0,x_n) - e^{-\epsilon/2}Rn > d(x_0,x_m) - e^{-\epsilon/2}Rm \text{ for all } m < n\]

Hence for infinitely many $n$ it holds that for all $K \le k \le n$:
\[e^{-\epsilon}d(x_0,x_k) + d(x_k,x_n) \le e^{-\epsilon/2}Rk + d(x_0,x_{n-k}) < d(x_0,x_n)\]

This implies that $x_k \in [x_0,x_n]_\epsilon$ and hence the sequence is aligned.
\end{proof}

\section{Alignment of Random Sequences}

\label{alignment}

The following lemma will enable us to prove that, in a sense, almost all random sequences that escape linearly to infinity are aligned.  

\begin{lemma}[\cite{MR1729880} Lemma 4.1]
\label{karlssonmargulis}
Suppose $\{a(m,n)\}_{0 \le m < n}$ satisfies the hypothesis of Theorem \ref{kingmanstheorem} and that:
\[\lim_{n \to +\infty}\int_M \frac{a_x(0,n)}{n}\mathrm{d}\mu(x) > 0\]

Then the probability of the following set is strictly positive:
\[\{x \in M: \text{ for infinitely many }n\in \mathbb{N}, a(0,n) - a(k,n) > 0\text{ for all } 1 \le k \le n\}\]
\end{lemma}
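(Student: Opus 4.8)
The plan is to derive this from Kingman's theorem applied to a cleverly chosen subadditive process, exploiting the positivity of the time constant. The key observation is that the desired property — that for infinitely many $n$ one has $a(0,n) - a(k,n) > 0$ for all $1 \le k \le n$ — is essentially a statement that the quantity $a(0,n)$ achieves a running record relative to the ``backwards'' increments. First I would set $R(x) = \lim_n a_x(0,n)/n$, the limit provided by Theorem \ref{kingmanstheorem}, and note that $\int_M R\, \mathrm{d}\mu > 0$ by hypothesis, so the invariant set $\{R > 0\}$ has positive measure; after discarding a null set we may assume we are working on an ergodic-like piece where things are well-behaved, or simply work with the conditional measure on a suitable invariant set of positive measure.

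The heart of the argument is the following trick, which appears in Karlsson--Margulis. Fix a small $\delta > 0$ with $\int R\,\mathrm{d}\mu > \delta$ (on the relevant invariant set), and consider the sequence $b_n(x) = a_x(0,n) - \delta n$. Using the cocycle relation (condition 2 of Theorem \ref{kingmanstheorem}), one has $a_x(k,n) = a_{f^k(x)}(0, n-k)$, so the event ``$a(0,n) - a(k,n) > 0$ for all $1 \le k \le n$'' becomes ``$a_x(0,n) > a_{f^k(x)}(0,n-k)$ for all $k$''. I would compare this with the shifted quantities: by subadditivity, $a_x(0,n) \le a_x(0,k) + a_{f^k(x)}(0,n-k)$, which goes the wrong way, so instead the right move is to consider when $n$ is a point where $a_x(0,n) - \delta n$ exceeds $a_{f^k(x)}(0,n-k) - \delta(n-k)$ for all $k$; this is a ``record from the left'' condition for the stationary sequence of functions. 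The standard maximal/record argument then says: if a stationary sequence $Y_n$ has $Y_n/n \to c > 0$ a.s. and in $L^1$-mean, then with positive probability $Y_n$ is a strict running maximum infinitely often — because the set of ``bad'' $x$ (never a record after some time) can be shown, via an exchangeability or a direct Birkhoff-type averaging argument on the record times, to have the wrong growth rate.

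Concretely, I would introduce the function $g(x) = \inf_{n \ge 1}\big(a_x(0,n) - \tfrac{\delta}{2} n\big)$ (an infimum over the forward orbit), show $g > -\infty$ on a positive-measure invariant set using that $a_x(0,n)/n \to R(x) > \delta/2$ there together with integrability, and then observe that the set where the record condition holds for infinitely many $n$ contains (up to the cocycle bookkeeping) the set where $g(f^n(x)) > g(x)$ for infinitely many $n$ — or more precisely where $a_x(0,n) - \tfrac\delta2 n$ stays above $g(x)$ along a subsequence. The contradiction if this failed for a full-measure subset of $\{R>0\}$ comes from: the level sets of $g$ would have to be essentially invariant, forcing $g$ to be constant a.e. on ergodic components, which combined with $a_x(0,n) - \tfrac\delta2 n \to +\infty$ contradicts $g(x) \le a_x(0,n) - \tfrac\delta2 n$ being tight. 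I would choose $\delta$ small enough (smaller than the mean of $R$ over the invariant set in question) that the positivity is preserved.

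The main obstacle I anticipate is handling the passage from the ``record'' formulation to the exact combinatorial statement ``$a(0,n) - a(k,n) > 0$ for \emph{all} $1 \le k \le n$ simultaneously'' — the subadditivity inequality points in the unhelpful direction, so one cannot simply telescope. The resolution is to not compare $a(0,n)$ with $a(0,k) + a(k,n)$, but rather to argue directly about the double-indexed family: define $c(k,n) = a(0,n) - a(k,n)$ and note that the event is $\{c(k,n) > 0 \ \forall k \le n\}$; then use that $c(k,n) = c(k, n') + (a(k,n) \text{-terms})$... actually the clean path is the one in Karlsson--Margulis: show the set of $x$ for which $a_x(0,n)$ is eventually never a strict maximum over $\{a_{f^k(x)}(0,n-k) : 1 \le k \le n\}$ must have measure bounded by something tending to $0$ as the horizon grows, using Kingman's convergence $\int a_x(0,n)/n \to \int R > 0$ to control the ``typical'' size and a pigeonhole over the first record time. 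I would carefully set this up so that Fatou's lemma or the dominated convergence theorem applies to the indicator functions of the record-time sets, and so that the strict inequality ``$>0$'' (as opposed to ``$\ge 0$'') survives; the strictness is exactly why one introduces the auxiliary $\delta$ and works with $a_x(0,n) - \delta n$ rather than $a_x(0,n)$ itself.
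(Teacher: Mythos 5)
You correctly identify the cocycle translation $a_x(k,n)=a_{f^k(x)}(0,n-k)$ and the $\delta$-shift device (mirroring the paper's $\epsilon$-shift of the cocycle to $c(n,x)=a_x(0,n)-\epsilon n$), but the paper then cites Karlsson--Margulis Lemma 4.1 as a black box, whereas you attempt to re-derive its content, and that re-derivation has a gap. The pivot in your sketch is the claim that ``if a stationary sequence $Y_n$ with $Y_n/n\to c>0$ \dots then with positive probability $Y_n$ is a strict running maximum infinitely often,'' attributed to a ``standard maximal/record argument.'' But the event in the lemma is not a running maximum of a single scalar sequence: writing $Y_n(x)=a_x(0,n)$, the condition is $Y_n(x)>Y_{n-k}(f^kx)$ \emph{simultaneously} for all $1\le k\le n$, a two-parameter comparison across both the time index and a spatial shift of the base point. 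No standard record-time argument produces this; establishing it for subadditive cocycles is exactly the nontrivial content of Karlsson--Margulis Lemma 4.1, and it requires a genuine iteration or pigeonhole argument that your sketch does not supply.

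The $g(x)=\inf_n\bigl(a_x(0,n)-\tfrac{\delta}{2}n\bigr)$ device as you describe it does not close the gap. You are right that $g>-\infty$ a.e.\ on $\{R>\delta/2\}$, but the implication ``$g(f^nx)>g(x)\Rightarrow a_x(0,n)>a_x(k,n)$ for all $k\le n$'' is unjustified: subadditivity only gives $a_x(0,n)-a_x(k,n)\le a_x(0,k)$, which is the wrong-direction bound, and nothing in the sketch supplies the lower bound you actually need. The closing invocation of ``level sets of $g$ would have to be essentially invariant'' followed by a tightness contradiction is too vague to verify and, as stated, is not an argument. To make this proof self-contained you would need to reproduce the actual Karlsson--Margulis argument at this point --- or, as the paper does, simply cite their Lemma 4.1 after the $\epsilon$-shift.
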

\begin{proof}
Choose $\epsilon$ positive but smaller than the time constant of $\{a(m,n)\}$ and consider the subadditive cocycle $c: \mathbb{N} \times M \to \mathbb{R}$ (over $f: M \to M$) given by:
\[c(n,x) = a_x(0,n) - \epsilon n\text{ for all }x \in M, n \in \mathbb{N}\]

The cocycle $c$ satisfies the hypothesis of Lemma 4.1 in \cite{MR1729880} and the conclusion follows.
\end{proof}

We will now establish the ergodic theorem used in the proofs of existence of rotation vectors throughout this work.

\begin{theorem}[Alignment of Random Sequences]
\label{alignmentofrandomsequences}
Suppose $(M,\mathcal{B},\mu)$ is a probability space and $f: M \to M$ is a measurable and measure preserving transformation.

Let $X$ be a metric space and $\phi: M \to X^{\mathbb{N}}$ a measurable function such that the family of functions $\{a(m,n)\}_{n > m \ge 0}$ defined by:
\[a(m,n): M \to [0,+\infty)\]
\[a_x(m,n) = d(\phi(x)_m,\phi(x)_n)\]
satisfies the hypothesis of Theorem \ref{kingmanstheorem}.  Then for almost every $x \in M$ if the sequence $\phi(x)$ escapes linearly to infinity then it is aligned.
\end{theorem}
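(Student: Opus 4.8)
The plan is to extract from Corollary \ref{rateofescapeforhomeos} the invariant integrable rate of escape $R(x)=\lim_{n}a_x(0,n)/n$, to apply Lemma \ref{karlssonmargulis} to a suitably shifted subadditive process, and to convert its record-type conclusion into the alignment condition through Observation \ref{geometriclemmaobservation}. Introduce the measurable function
\[
L(x)=\lim_{K\to+\infty}\ \limsup_{n\to+\infty}\ \min_{K\le k\le n}\frac{a_x(0,n)-a_x(k,n)}{k};
\]
the inner limsup is non-decreasing in $K$ (the index set over which the minimum runs shrinks), so the limit $L(x)\in[0,+\infty]$ exists. By Corollary \ref{rateofescapeforhomeos} almost every $\phi(x)$ has finite rate of escape $R(x)$, hence escapes linearly to infinity if and only if $R(x)>0$, and by Observation \ref{geometriclemmaobservation} such a $\phi(x)$ is aligned exactly when $L(x)\ge R(x)$; so it suffices to show that $L(x)\ge R(x)$ for $\mu$-almost every $x$ with $R(x)>0$. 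The one genuinely technical preliminary is that $L$ is $f$-invariant $\mu$-almost everywhere: by the stationarity relation $a_{fx}(i,j)=a_x(i+1,j+1)$, forming $L(fx)$ amounts to evaluating the same expression after a unit shift of all indices, and for each individual quotient this shift moves the numerator by at most $a_x(0,1)$ (finite a.e.) and the denominator by one unit, a change that is $o(1)$ as $k\to+\infty$ uniformly in $n$ because $a_x(0,k)/k$ is bounded; the accompanying shift of the index range of the minimum by one is absorbed in the outer limit over $K$, so $L(fx)=L(x)$.

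The core of the proof is the claim that for every rational $\lambda>0$ one has $L\ge\lambda$ at $\mu$-almost every point of $\{R>\lambda\}$. Granting this and applying it to all rationals $\lambda<R(x)$ (a countable union of null exceptional sets) yields $L(x)\ge R(x)$ for $\mu$-almost every $x$ with $R(x)>0$. To prove the claim, suppose $\mu(W)>0$, where $W=\{R>\lambda\}\cap\{L<\lambda\}$. Since $R$ and $L$ are $f$-invariant, $W$ is invariant modulo null sets, so $\nu:=\mu|_W/\mu(W)$ is an $f$-invariant probability measure, and $\int R\,\mathrm{d}\nu>\lambda$ since $R>\lambda$ holds $\nu$-almost everywhere. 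The process $b_x(m,n):=a_x(m,n)-\lambda(n-m)$ satisfies the hypotheses of Theorem \ref{kingmanstheorem} with respect to $(M,\mathcal{B},\nu,f)$ and has time constant $\int R\,\mathrm{d}\nu-\lambda>0$; hence Lemma \ref{karlssonmargulis}, applied to $b$ in place of $a$, produces a set of positive $\nu$-measure of points $x$ for which, for infinitely many $n$, $b_x(0,n)-b_x(k,n)>0$ --- equivalently $a_x(0,n)-a_x(k,n)>\lambda k$ --- for all $1\le k\le n$. For any such $x$ and any fixed $K$, each term of $\min_{K\le k\le n}\frac{a_x(0,n)-a_x(k,n)}{k}$ exceeds $\lambda$ for these (infinitely many) $n\ge K$, so the inner limsup is $\ge\lambda$ for every $K$ and therefore $L(x)\ge\lambda$; since $\nu(\{L<\lambda\})=1$ this is a contradiction, so $\mu(W)=0$.

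It remains to assemble the cases: for $\mu$-almost every $x$ with $R(x)=0$ the sequence $\phi(x)$ does not escape linearly to infinity and the asserted implication holds vacuously, while for $\mu$-almost every $x$ with $R(x)>0$ we have just shown $L(x)\ge R(x)$, so $\phi(x)$ is aligned by Observation \ref{geometriclemmaobservation}. I expect the main obstacle, beyond routine care with null sets and with the restricted measure $\nu$, to be the verification that $L$ is $f$-invariant: this invariance is precisely what lets the positive-measure output of Lemma \ref{karlssonmargulis} be amplified to the almost-everywhere statement, and once it is available the remainder follows directly from the results already established.
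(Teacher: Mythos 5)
Your proof is correct and follows essentially the same route as the paper: introduce the function $L$, establish its $f$-invariance from stationarity, and use Lemma \ref{karlssonmargulis} applied to a shifted subadditive process to show $L \ge R$ almost everywhere, then conclude via Observation \ref{geometriclemmaobservation}. The only minor variation is in setting up the shifted process (you subtract a constant $\lambda$ and pass to the normalized restriction $\nu=\mu|_W/\mu(W)$, while the paper subtracts $e^{-\epsilon}R(x)$ and multiplies by $1_A$), but both devices serve the same purpose and both are correct.
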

\begin{proof}
Let $R: M \to [0,+\infty)$ be the function given by Corollary \ref{rateofescapeforhomeos}.  For almost every $x \in M$, the sequence $\phi(x)$ escapes linearly to infinity if and only if $R(x) > 0$.

Consider the measurable function $L: M \to [0,+\infty)$ defined by:
\[L(x) = \max(0,\lim_{K \to +\infty}\limsup_{n \to +\infty}\min_{K \le k \le n}\{\frac{d(\phi(x)_0,\phi(x)_n) - d(\phi(x)_n,\phi(x)_k)}{k}\})\]

By Observation \ref{geometriclemmaobservation} it suffices to show that $L \ge R$ on a set of full measure.

First we will show that the function $L$ is invariant on a set of full measure.  For this purpose it suffices to show that $L(x) \le L(f(x))$ for almost every $x$, since the probability of the set $\{x \in M: L(x) \ge a\}$ is equal to that of the set $\{x \in M: L(f(x)) \ge a\}$ for every $a \in \mathbb{R}$ because $f$ is measure preserving.

If $L(x) = 0$ then trivially $L(x) \le L(f(x))$.  Suppose $L(x) \ge t > 0$.  This implies that for each $\epsilon > 0$ there exists $K \in \mathbb{N}$ and infinitely values of $n \in \mathbb{N}$ such that:
\[d(\phi(x)_0,\phi(x)_n) - d(\phi(x)_n,\phi(x)_k) > te^{-\epsilon}k\text{ for all }K \le k \le n\]
for a slightly larger $K$ the following holds:
\begin{align*}
d(\phi(x)_1,\phi(x)_n) &- d(\phi(x)_n,\phi(x)_k) \ge  -d(\phi(x)_1,\phi(x)_0) + d(\phi(x)_0,\phi(x)_n) - d(\phi(x)_n,\phi(x)_k)\\
&> -d(\phi(x)_1,\phi(x)_0) +te^{-\epsilon}k > te^{-2\epsilon}(k-1)\text{ for all }K \le k \le n
\end{align*}
and therefore $L(f(x)) \ge te^{-2\epsilon}$.  Since this holds for every $\epsilon > 0$ we have shown the claim that $L(x) \le L(f(x))$ almost everywhere and therefore $L$ is invariant on a set of full measure.  

To simplify the discussion, modify $L$ on a set of measure $0$ so that it is strictly invariant.

We will now show that $L \ge R$ almost everywhere. 

This is trivially true in the set where $R = 0$.

Suppose that for some $\epsilon > 0$ the set $A = \{x \in M: L(x) < e^{-\epsilon}R(x)\}$ has positive measure.  Let $1_A$ be the function that takes the value $1$ on $A$ and $0$ outside of $A$, and consider the stationary subadditive process $\{b(m,n)\}_{0 \le m < n}$ defined by
\[b_x(m,n) = (d(\phi(x)_m,\phi(x)_n) - (n-m)e^{-\epsilon}R(x))1_A(x)\]

Since this process satisfies
\[\lim_{n \to +\infty}\int_M \frac{b_x(0,n)}{n}\mathrm{d}\mu(x) = (1-e^{-\epsilon})\int_AR(x)\mathrm{d}\mu(x) > 0\]
by Lemma \ref{karlssonmargulis} there is a set of positive probability of $x \in M$ such that there exist infinitely many $n$ satisfying the following:
\[b_x(0,n) - b_x(k,n) > 0\text{ for all }1 \le k \le n\]

However since
\[b_x(0,n) - b_x(k,n) = (d(\phi(x)_0,\phi(x)_n) - d(\phi(x)_n,\phi(x)_k) - ke^{-\epsilon}R(x))1_A(x)\]
this would imply that for some $x \in A$ we have $L(x) \ge e^{-\epsilon}R(x)$ contradicting the definition of $A$.  Therefore we must have $L \ge R$ almost everywhere as claimed.
\end{proof}

\section{Proof of the Main Theorem}
\label{proof}

We will now prove the main theorem (Theorem \ref{maintheorem}).

\begin{proof}
One can choose $\tilde{x}$ so that it is a measurable function of $x$ (e.g. apply \cite[Theorem 12.16 page 78]{MR1321597}).  By Corollary \ref{rateofescapeforhomeos}, for almost every $x \in M$ the sequence $\{F^n\tilde{x}\}_{n \ge 0}$ has a finite rate of escape $R(x)$.

If $R(x) = 0$ then $0 \in T_xM$ is a rotation vector for $x$.

On the other hand, by Theorem \ref{alignmentofrandomsequences}, for almost every $x \in M$ in the case that $R(x) > 0$ the sequence $\{F^n\tilde{x}\}_{n \ge 0}$ is aligned.  In this case the existence of a geodesic escort follows from Lemma \ref{geometriclemma}.  This implies the existence of a rotation vector by Lemma \ref{vectorsvrsescorts}.

The uniqueness claim follows from the fact that no two geodesics $\tilde{\alpha},\tilde{\beta}:[0,+\infty) \to \widetilde{M}$ with the same starting point satisfy $d(\tilde{\alpha}(n),\tilde{\beta}(n)) = o(n)$ when $n \to +\infty$.  This is a direct consequence of Lemma \ref{convexity}.
\end{proof}

\section{Periodic points and Homological Rotation Vectors}
\label{periodic}
The purpose of this section is to clarify the relationship between homological rotation vectors and rotation vectors as defined in this paper.  We will consider rotation vectors associated to periodic points.  The result of this section is illustrated by the following two examples:

\begin{itemize}
\item Let $\widetilde{M} = \{(x,y) \in \mathbb{R}^2: y > 0\}$ with the hyperbolic metric, and define $F: \widetilde{M} \to \widetilde{M}$ by $F(x,y) = (x+1,y)$.  If $M$ is any quotient of $\widetilde{M}$ by the group generated by a non-trivial translation with respect to the $x$-axis, $f$ is the projection of $F$ to $M$ and $\mu$ is an invariant probability measure supported on the projection of $\mathbb{R}\times \{1\}$ to $M$ then the hypothesis of Theorem \ref{maintheorem} are satisfied.  Direct calculation shows that the rate of escape is equal to $0$, however the homological rotation vector of every $f$ orbit in $M$ is non-null.
\item Let $M$ be the two dimensional sphere with two handles (i.e. a double-torus) with a hyperbolic metric.  There is a simple closed curve (separating both handles) on $M$ that is not homotopic to a constant but is homologically trivial.  Consider a flow $\phi: \mathbb{R} \times M \to M$ with this curve as a periodic orbit, take $f = \phi^1$ and $\mu$ to be the invariant measure for the flow supported on the selected periodic orbit.  By lifting this flow to the universal covering space and taking the time $1$ to be $F$ we obtain an example with null homological rotation vectors but with positive rate of escape. 
\end{itemize}

In this section we will have to distinguish between the usual relationship of (end-point fixing) homotopy between curves and free-homotopy equivalence between closed curves (where there is no fixed base-point).  We recall that two continuous closed curves $\alpha,\beta: [0,1] \to M$ in a manifold are called freely-homotopic if there exists a continuous function $\gamma:[0,1]\times [0,1] \to M$ such that:
\begin{enumerate}
\item $\gamma(0,t) = \alpha(t)$ for all $t$.
\item $\gamma(1,t) = \beta(t)$ for all $t$.
\item $\gamma(s,0) = \gamma(s,1)$ for all $s$.
\end{enumerate}
The requirement that both curves be parametrized on $[0,1]$ is non-essential and is removed by declaring that two arbitrary closed curves $\alpha:[a,b] \to M$ and $\beta:[c,d]\to M$ are freely-homotopic if there exist continuous and increasing reparametrizations of them satisfying the above definition. 

Free homotopy is an equivalence relationship among all closed continuous closed curves in $M$ (see \cite[Section 17]{MR807945} for a general reference on this subject).  The equivalence classes of this relationship are called free homotopy classes.

On a Riemannian manifold each free homotopy class of closed curves has an associated length as follows:
\begin{definition}[Length of a free homotopy class]
Let $M$ be a Riemannian manifold and $C$ be a free homotopy class of closed curves in $M$.  The length $l(C)$ of $C$ is defined as:
$l(C) = \inf\{|\alpha|: \alpha \in C\}$
where $|\alpha|$ denotes the length of a smooth curve $\alpha$.
\end{definition}

We will need to use the following two facts about free homotopy classes of manifolds of non-positive curvature.

\begin{lemma}\label{freehomotopyisconjugation}
Let $M$ be a connected manifold.  Two closed curves $\alpha,\beta: [0,1] \to M$ are freely homotopic if and only if there exists $\gamma:[0,1] \to M$ with $\gamma(0) = \alpha(0)$ and $\gamma(1) = \beta(0)$ such that $\alpha$ is homotopic to $\gamma^{-1}\cdot \beta \cdot \gamma$ (where the dot denotes concatenation and $\gamma^{-1}$ is an orientation-reversing reparametrization of $\gamma$).
\end{lemma}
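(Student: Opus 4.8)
The plan is to prove both implications of this classical correspondence (free homotopy classes of loops $\leftrightarrow$ conjugacy classes in $\pi_1$) by hand, using only elementary manipulations of homotopies of paths; no curvature hypothesis enters. Throughout write $p = \alpha(0)$ and $q = \beta(0)$, and recall (as cited) that free homotopy is an equivalence relation and that an endpoint-fixing homotopy is a fortiori a free homotopy.

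For the forward implication, let $\Gamma : [0,1]\times[0,1] \to M$ be a free homotopy with $\Gamma(0,\cdot) = \alpha$, $\Gamma(1,\cdot) = \beta$ and $\Gamma(s,0) = \Gamma(s,1)$ for all $s$, and set $\gamma(s) = \Gamma(s,0)$, a path from $p$ to $q$. Reading $\Gamma$ along the four edges of the square, the bottom edge $s\mapsto\Gamma(s,0)$ is $\gamma$, the right edge $t\mapsto\Gamma(1,t)$ is $\beta$, the top edge $s\mapsto\Gamma(s,1)$ is again $\gamma$, and the left edge $t\mapsto\Gamma(0,t)$ is $\alpha$. Since the square is convex, the path running up its left edge is homotopic rel endpoints, inside $[0,1]^2$, to the concatenation of the bottom edge, the right edge, and the reversed top edge; composing this homotopy with $\Gamma$ produces an endpoint-fixing homotopy in $M$ between $\alpha$ and $\gamma\cdot\beta\cdot\gamma^{-1}$, i.e. the required conjugate of $\beta$ (up to the chosen orientation-of-reversal and order-of-concatenation conventions, which only affect whether one writes $\gamma$ or $\gamma^{-1}$ first).

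For the converse, suppose $\gamma$ is a path from $p$ to $q$ with $\alpha$ homotopic rel endpoints to $\gamma^{-1}\cdot\beta\cdot\gamma$. By transitivity of free homotopy it is enough to produce a free homotopy from $\gamma^{-1}\cdot\beta\cdot\gamma$ to $\beta$. This is the standard ``slide the basepoint along $\gamma$'' homotopy: for $s\in[0,1]$ let $\gamma_s$ be the tail of $\gamma$ starting at $\gamma(s)$, say $\gamma_s(u) = \gamma(s + u(1-s))$, so that $\gamma_0 = \gamma$ and $\gamma_1$ is the constant path at $q$, and let the $s$-th closed curve be a fixed parametrization of $\gamma_s^{-1}\cdot\beta\cdot\gamma_s$ (based at $\gamma(s)$). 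This depends continuously on $(s,t)$, is $\gamma^{-1}\cdot\beta\cdot\gamma$ at $s = 0$, and is (a reparametrization of) $\beta$ at $s = 1$; hence $\gamma^{-1}\cdot\beta\cdot\gamma$ is freely homotopic to $\beta$, and therefore $\alpha$ is freely homotopic to $\beta$ as claimed.

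I expect no genuine obstacle; the entire argument is routine. The only care needed is bookkeeping: pinning down the reversal and concatenation conventions so that $\gamma^{-1}\cdot\beta\cdot\gamma$ is a loop based at $\alpha(0)$ and the ``square'' homotopy delivers precisely that loop, and keeping track of the explicit reparametrizations used in the square homotopy and in the family $\gamma_s$. The slight ambiguity between the ``$[0,1]$-parametrized'' definition of free homotopy and the version for arbitrary intervals has already been dealt with in the remark following that definition, so it plays no role here.
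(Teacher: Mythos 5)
Your proof is correct, and it departs from the paper's in the harder direction. The paper handles the implication ``freely homotopic $\Rightarrow$ conjugate'' in two stages: it first translates $\beta$ to the basepoint $\alpha(0)$ by conjugating with an arbitrary path $\gamma_1$, and then invokes Theorem~17.3.1 of Dubrovin--Fomenko--Novikov as a black box for the statement that two freely homotopic loops \emph{with the same basepoint} are conjugate in $\pi_1$, producing a loop $\gamma_2$ and finally taking $\gamma = \gamma_1\cdot\gamma_2$. You instead prove this key step directly: given the free homotopy $\Gamma$ on the square, you read off $\gamma(s)=\Gamma(s,0)$ as the trace of the basepoint, use convexity of $[0,1]^2$ to see that the left edge is homotopic rel endpoints to ``bottom, then right, then reversed top,'' and push this through $\Gamma$ to get an endpoint-fixing homotopy from $\alpha$ to the conjugate of $\beta$. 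This inlines the cited theorem (and handles the general-basepoint case in one shot rather than two). The easier implication is the same idea in both proofs: the paper asserts $\alpha$ is freely homotopic to $\beta\cdot\gamma\cdot\gamma^{-1}$ and hence to $\beta$, which is exactly your explicit ``slide the basepoint along $\gamma$'' homotopy $\gamma_s^{-1}\cdot\beta\cdot\gamma_s$. Your version is more self-contained; the paper's is terser at the cost of the external citation. The sign-of-$\gamma$ discrepancy you flag is indeed only a matter of which concatenation convention one adopts, and you dispose of it correctly.
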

\begin{proof}
First suppose that $\alpha$ is homotopic to $\gamma^{-1}\cdot \beta \cdot \gamma$ for some $\gamma$.  It follows that $\alpha$ is freely homotopic to $\beta\cdot \gamma \cdot \gamma^{-1}$ and hence to $\beta$.

Now suppose that $\alpha$ and $\beta$ are freely homotopic.

Choose any $\gamma_1:[0,1] \to M$ with $\gamma_1(0) = \alpha(0)$ and $\gamma_1(1) = \beta(0)$.  Notice that $\gamma_1^{-1}\cdot \beta\cdot \gamma_1$ is freely homotopic to $\beta$ and hence to $\alpha$. By \cite[Theorem 17.3.1]{MR807945} there exists $\gamma_2:[0,1] \to M$ with $\gamma_2(0) = \gamma_2(1) = \alpha(0)$ such that $\gamma_2^{-1}\cdot \gamma_1^{-1}\cdot \beta \cdot \gamma_1 \cdot \gamma_2$ is homotopic to $\alpha$.  Hence by taking $\gamma = \gamma_1 \cdot \gamma_2$ we have shown that $\alpha$ is homotopic to $\gamma^{-1}\cdot \beta \cdot \gamma$.
\end{proof}

\begin{lemma}\label{lengthofaclass}
Let $\pi: \widetilde{M} \to M$ be a Riemannian covering where $\widetilde{M}$ is a Hadamard manifold.  Let $C$ be a free homotopy class in $M$ and $\rho: \widetilde{M} \to \widetilde{M}$ be a covering transformation such that the geodesic segment $[p,\rho(p)]$ projects to a curve in class $C$ for some $p \in \widetilde{M}$.  Then the length of the class $C$ is given by:
\[l(C) = \inf\{d(x,\rho(x)): x \in \widetilde{M}\}\] 
\end{lemma}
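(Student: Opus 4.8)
The plan is to prove the two inequalities between $l(C)$ and the quantity $\delta := \inf\{d(x,\rho(x)) : x \in \widetilde{M}\}$ separately. Since $\widetilde{M}$ is a Hadamard manifold it is simply connected, so $\pi$ is the universal Riemannian covering, the group $\Gamma$ of covering transformations is (anti\nobreakdash-)isomorphic to $\pi_1(M)$, and free homotopy classes of closed curves in $M$ correspond to conjugacy classes in $\Gamma$; the hypothesis says exactly that $C$ is the class corresponding to the conjugacy class of $\rho$. Two elementary facts will be used repeatedly: every $\gamma \in \Gamma$ is an isometry of $\widetilde{M}$ and satisfies $\pi \circ \gamma = \pi$; and in a Hadamard manifold the geodesic segment joining two points realizes their distance, so that (as $\pi$ is a local isometry) $\pi([x,\rho(x)])$ is a closed curve in $M$ of length exactly $d(x,\rho(x))$.

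For the inequality $l(C) \le \delta$ I would fix an arbitrary $x \in \widetilde{M}$ and check that $\pi([x,\rho(x)])$ lies in the class $C$. Choosing a path $\eta$ in $\widetilde{M}$ from $p$ to $x$, the concatenation $\eta \cdot [x,\rho(x)] \cdot \overline{\rho\circ\eta}$ and the segment $[p,\rho(p)]$ are both paths from $p$ to $\rho(p)$, hence homotopic rel endpoints because $\widetilde{M}$ is simply connected. Projecting and using $\pi\circ\rho\circ\eta = \pi\circ\eta$, the loop $\pi([p,\rho(p)])$ is based-homotopic to $(\pi\circ\eta)\cdot\pi([x,\rho(x)])\cdot\overline{\pi\circ\eta}$, so Lemma \ref{freehomotopyisconjugation} shows that $\pi([x,\rho(x)])$ is freely homotopic to $\pi([p,\rho(p)])$ and therefore lies in $C$. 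As this curve has length $d(x,\rho(x))$ we get $l(C) \le d(x,\rho(x))$, and taking the infimum over $x$ gives $l(C) \le \delta$.

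For the reverse inequality I would take an arbitrary smooth closed curve $\alpha$ in $C$, lift it to a path $\tilde{\alpha}$ in $\widetilde{M}$ starting at some $q \in \widetilde{M}$, and observe that $\tilde{\alpha}(1) = \sigma(q)$ for a unique $\sigma \in \Gamma$ which, because $\alpha \in C$, is conjugate in $\Gamma$ to $\rho$. Then $|\alpha| = |\tilde{\alpha}| \ge d(q,\sigma(q)) \ge \inf_{y}d(y,\sigma(y))$, and $\inf_y d(y,\sigma(y)) = \inf_y d(y,\rho(y)) = \delta$ since conjugating $\rho$ by an isometry of $\widetilde{M}$ leaves this infimum unchanged. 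Taking the infimum over all such $\alpha$ yields $l(C) \ge \delta$, and combined with the previous paragraph this proves the lemma.

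I do not expect a genuine obstacle in this argument; the only point needing care is the translation between free homotopy classes in $M$ and conjugacy classes of covering transformations — concretely, verifying that $\pi([x,\rho(x)])$ represents $C$ for every $x$ and, dually, that the endpoint of a lift of any curve in $C$ is the image of its initial point under a conjugate of $\rho$. For both, the relation $\pi\circ\gamma = \pi$ together with Lemma \ref{freehomotopyisconjugation} and the simple connectivity of $\widetilde{M}$ suffices.
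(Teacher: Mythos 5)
Your proof is correct and follows essentially the same two-inequality strategy as the paper's, with both directions resting on Lemma~\ref{freehomotopyisconjugation} and the correspondence between free homotopy classes and conjugacy classes of covering transformations. The only cosmetic difference is in the lower bound: the paper lifts the conjugating path from $p$ so that the deck transformation realized by the competing loop is $\rho$ itself, whereas you lift from an arbitrary basepoint, obtain a conjugate $\sigma$ of $\rho$, and then invoke the (correct and easily checked) fact that the displacement infimum $\inf_y d(y,\sigma(y))$ is invariant under conjugation by isometries — the two devices are interchangeable.
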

\begin{proof}
Let $\alpha:[0,1] \to M$ be the projection of a parametrization the geodesic segment $[p,\rho(p)]$ to $M$.  If $\beta: [0,1] \to M$ is a closed curve freely homotopic to $\alpha$ then there exists $\gamma:[0,1] \to M$ such that $\alpha$ is homotopic to $\gamma^{-1}\cdot \beta \cdot \gamma$.  Hence by lifting $\gamma$ starting at $p$ the other endpoint $q$ satisfies that the segment $[q,\rho(q)]$ projects to a curve homotopic to $\beta$.  Hence the length of $\beta$ is greater than or equal to $d(q,\rho(q))$ and it follows that:
\[l(C) \ge \inf\{d(x,\rho(x)): x \in \widetilde{M}\}\]

On the other hand for any $x \in \widetilde{M}$ the geodesic segment $[x,\rho(x)]$ projects to a curve $\beta$ which is freely homotopic to $\alpha$ as one can see by considering the projection $\gamma$ of any curve $\gamma$ between $p$ and $x$ and noting that $\gamma^{-1}\cdot\beta\cdot\gamma$ is homotopic to $\alpha$.  Hence one obtains that $d(x,\rho(x))$ is greater than or equal to $l(C)$, which establishes the claim. 
\end{proof}

Once we fix an isotopy between a homeomorphism $f: M \to M$ of a Riemannian manifold and the identity each periodic orbit is associated to a free homotopy class as follows:
\begin{definition}[Free homotopy class of a periodic orbit]
Let $M$ be a Riemannian manifold and $f:[0,1]\times M \to M$ be an isotopy with $f_0$ equal to the identity mapping of $M$.  If $x \in M$ is a periodic point for $f_1$ with minimal period $p$ then the free homotopy class $C(x)$ of $x$ is defined as the free homotopy class of the following closed curve:
\[\alpha:[0,p] \to M\]
\[\alpha(t) = f_{t-k}\circ f_1^k\text{ if } k \le t \le k+1\text{ where }k \in \mathbb{Z}\]
\end{definition}

Notice that if $x$ is a periodic orbit of period $n$ for a homeomorphism $f$ then $\mu = \frac{1}{n}\sum_{k = 0}^{n-1}\delta_{f^k(x)}$ is an ergodic invariant measure (where $\delta_p$ denotes the Dirac delta at a point $p$).  Any such measure will trivially satisfy the integrability hypothesis of Theorem \ref{maintheorem}.  Hence all periodic orbits have rotation vectors associated to them.  

The theorem below shows the relationship between the norm of the rotation vector of a periodic orbit and the length of its free homotopy class.

\begin{theorem}\label{periodicescaperate}
Let $\widetilde{M}$ be a Hadamard manifold and $\pi: \widetilde{M} \to M$ a Riemannian covering.  Suppose $f: M \to M$ is isotopic to the identity and $x \in M$ is a periodic point for $f$ of minimal period $p$.  Given $g:[0,1]\times M \to M$ an isotopy between the identity and $f$, if $F = G_1$ where $G:[0,1]\times \widetilde{M} \to \widetilde{M}$ is the unique lift of $g$ to $\widetilde{M}$ starting at the identity then $\|v_F(x)\| = l(C(x))/p$. 
\end{theorem}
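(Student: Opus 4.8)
The plan is to relate the $F$-orbit of a lift $\tilde{x}$ directly to the covering transformation that realizes the free homotopy class $C(x)$. Let $x$ be periodic of minimal period $p$, and let $\alpha:[0,p]\to M$ be the closed curve from the definition of $C(x)$, built from the isotopy $g$. Lifting $\alpha$ starting at $\tilde{x}$ using $G$, one sees that $G_t$ lifts the loop, and at time $p$ the lifted path ends at $G_1^p(\tilde{x}) = F^p(\tilde{x})$. Since $\alpha$ is a closed loop at $x$, there is a unique covering transformation $\rho$ with $\rho(\tilde{x}) = F^p(\tilde{x})$, and the geodesic segment $[\tilde{x},\rho(\tilde{x})]$ projects to a curve freely homotopic to $\alpha$ (one homotopes the lifted path to the geodesic segment, which is possible since $\widetilde{M}$ is simply connected; then projects). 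Hence by Lemma \ref{lengthofaclass}, $l(C(x)) = \inf\{d(y,\rho(y)) : y \in \widetilde{M}\}$.

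The second ingredient is that $F$ commutes with $\rho$: indeed $F = G_1$ is admissible (it commutes with all covering transformations, being obtained by lifting an isotopy from the identity), so in particular it commutes with $\rho$. Combining commutation with $\rho(\tilde{x}) = F^p(\tilde{x})$ gives, by induction, $\rho^k(\tilde{x}) = F^{pk}(\tilde{x})$ for all $k\ge 0$. Therefore the rate of escape of $\{F^n\tilde{x}\}$ along the subsequence $n = pk$ is
\[
\lim_{k\to\infty}\frac{d(\tilde{x},\rho^k(\tilde{x}))}{pk}.
\]
Since $F^p$ commutes with $\rho$ and $F^p$ moves $\tilde{x}$ by $\rho$, the orbit $\{\rho^k(\tilde{x})\}$ is an orbit of the isometry $\rho$; by the standard subadditivity/triangle-inequality argument (the sequence $k\mapsto d(\tilde{x},\rho^k(\tilde x))$ is subadditive, and also $d(\tilde x,\rho^k(\tilde x))/k$ is bounded below by the translation length of $\rho$, with equality in the limit by convexity, Lemma \ref{convexity}), this limit equals the translation length $\ell(\rho) := \inf_y d(y,\rho(y))$, which by the previous paragraph is $l(C(x))$. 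Dividing by $p$ shows the rate of escape of $\{F^n\tilde{x}\}$ exists and equals $l(C(x))/p$.

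Finally, by Lemma \ref{vectorsvrsescorts} the rotation vector $v_F(x)$ exists (as guaranteed for periodic orbits via the Main Theorem, or directly since the orbit of an isometry that escapes linearly is aligned by Lemma \ref{alignmentoforbitsofsemicontractions}), and when it exists the sequence $\{F^n\tilde{x}\}$ has rate of escape exactly $\|v_F(x)\|$. Therefore $\|v_F(x)\| = l(C(x))/p$, as claimed.

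I expect the main obstacle to be the careful bookkeeping in the first paragraph: verifying that lifting the concatenated isotopy-path $\alpha$ via $G$ really does end at $F^p(\tilde x)$, and that the resulting lifted path can be homotoped rel endpoints to the geodesic segment $[\tilde x, \rho(\tilde x)]$ so that Lemma \ref{lengthofaclass} applies with this particular $\rho$. The identification of the limiting rate of escape with the translation length of an isometry is essentially Lemma \ref{alignmentoforbitsofsemicontractions} combined with the convexity lemma and is routine; the commutation $F^p\rho = \rho F^p$ is immediate from admissibility.
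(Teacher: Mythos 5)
Your route is essentially the paper's: identify the covering transformation $\rho$ with $\rho(\tilde{x}) = F^p(\tilde{x})$, verify that Lemma~\ref{lengthofaclass} applies to express $l(C(x))$ as $\inf_y d(y,\rho(y))$, use admissibility of $F$ (equivalently, commutation with $\rho$) to reduce to the $\rho$-orbit of $\tilde{x}$, and invoke Lemma~\ref{vectorsvrsescorts}. One small inaccuracy: Lemma~\ref{alignmentoforbitsofsemicontractions} tells you the $\rho$-orbit is \emph{aligned} (hence admits a geodesic escort), but it says nothing about what the rate of escape actually is, so it cannot be what ``combined with the convexity lemma'' yields the identification of the limit with the translation length. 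That identification is the genuinely nontrivial step and is exactly where the paper spends its effort: the inequality $\lim_k d(\tilde{x},\rho^k\tilde{x})/k \le R_1 := \inf_y d(y,\rho(y))$ is easy via the triangle inequality, but the reverse inequality requires showing $\inf_y d(y,\rho^{2^m}(y)) = 2^m R_1$, which the paper proves by taking a near-minimizing sequence $y_n$ for $\rho^2$, passing to the midpoint $z_n$ of $[y_n,\rho(y_n)]$, and applying convexity of the distance between geodesics (Lemma~\ref{convexity}) to get $d(z_n,\rho(z_n)) \le \tfrac{1}{2}d(y_n,\rho^2(y_n))$. You assert ``$d(\tilde{x},\rho^k\tilde{x})/k$ is bounded below by the translation length, with equality in the limit by convexity'' — this is true, and it is a standard CAT(0) fact, but it is precisely the midpoint argument in disguise, so you should either cite it as such or reproduce it; a bare appeal to ``convexity'' does not supply the needed lower bound. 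Everything else in your write-up, including the careful check that the lifted isotopy path can be homotoped rel endpoints to the geodesic segment $[\tilde{x},\rho(\tilde{x})]$ so that Lemma~\ref{lengthofaclass} applies with this $\rho$, matches the paper.
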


\begin{proof}
To begin we observe that $F$ commutes with all covering transformation.  To see this let $\gamma$ be a covering transformation and notice that for each $t$ the equation $\rho_t = \gamma^{-1} \circ G_t^{-1}\circ \gamma \circ G_t$ defines a covering transformation.  Since for all $y \in \widetilde{M}$ the curve $t \mapsto \rho_t(y)$ is continuous and starts at $y$ it must be constant.  For $t = 1$ this gives $\gamma\circ F = F \circ \gamma$ as claimed.

Next fix a lift $\tilde{x}$ of $x$ and let $\rho: \widetilde{M} \to \widetilde{M}$ be the covering transformation such that $\rho(\tilde{x}) = F^p(\tilde{x})$.  Since $F$ commutes with $\rho$ it holds that $F^{np}(\tilde{x}) = \rho^n(\tilde{x})$ so that by lemma \ref{vectorsvrsescorts} one has:
\[p\|v_F(x)\| = \lim_{n \to +\infty}\frac{d(\tilde{x},\rho^n(\tilde{x}))}{n}\]

For any $y,z \in \widetilde{M}$ one has:
\[\lim_{n}\frac{d(y,\rho^n(y))}{n} \le \lim_{n}\frac{d(y,x) + d(x,\rho^n(x)) + d(\rho^n(x),\rho^n(y))}{n} = \lim_{n}\frac{d(x,\rho^n(x))}{n}\]

Hence $p\|v_F(x)\|$ equals the rate of escape of all points in $\widetilde{M}$ under iteration of $\rho$.

For each $n \ge 1$ we define:
\[R_n = \inf\{d(y,\rho^n(y)): y \in \widetilde{M}\}\]

We will prove that $p\|v_F(x)\| = R_1$ and by lemma \ref{lengthofaclass} the theorem follows.

First we establish $p\|v_F(x)\| \le R_1$ as follows:  For each $\epsilon > 0$ choose $y \in \widetilde{M}$ with $d(y,\rho(y)) \le R_1 + \epsilon$.  By the triangle inequality one has that
\[p\|v_F(x)\| = \lim_{n}\frac{d(y,\rho^n(y))}{n} \le R_1 + \epsilon\]
and by letting $\epsilon$ go to zero one establishes the desired inequality.

On the other hand, if the rate of escape of some point $y \in \widetilde{M}$ was less than $R_1$ then for all $n$ large enough one would have $d(y,\rho^{n}(y)) < nR_1$.  In particular this would imply $R_n < nR_1$ for all $n$ large enough.  Hence to establish the equality $R_1 = p\|v_F(x)\|$ it suffices to show that $R_n = n R_1$ whenever $n$ is a power of two\begin{footnote}{Here any unbounded sequence would do. In fact, once $R_1$ is characterized as the rate of escape of all points, one can deduce that $R_n = nR_1$ for all $n$.}\end{footnote}.

We will show $R_2 = 2R_1$ and the general claim follows from applying the same argument to iterates of $\rho$.

First consider a sequence $y_n$ in $\widetilde{M}$ with $d(y_n,\rho(y_n)) \to R_1$, it holds that:
\[R_2 \le d(y_n,\rho^2(y_n)) \le d(y_n,\rho(y_n)) + d(\rho(y_n),\rho^2(y_n))\]
which establishes (by taking limit of the right hand side) the inequality $R_2 \le 2R_1$.

Next consider a sequence $y_n$ in $\widetilde{M}$ such that $d(y_n,\rho^2(y_n)) \to R_2$ when $n \to +\infty$.  If $z_n$ is the midpoint of the geodesic segment $[y_n,\rho(y_n)]$ then, because $\rho$ is an isometry, $\rho(z_n)$ is the midpoint of $[\rho(y_n),\rho^2(y_n)]$.   Hence by lemma \ref{convexity} applied to the geodesic segments $[\rho(y_n),y_n]$ and $[\rho(y_n),\rho^2(y_n)]$ respectively we obtain:
\[R_1 \le d(z_n,\rho(z_n)) \le \frac{d(y_n,\rho^2(y_n))}{2} \to \frac{R_2}{2} \text{ when }n \to +\infty\]

From which the result follows.
\end{proof}

\section{Past orbits}
\label{sectionpast}
\subsection{Past and future rotation vectors}

We will begin this section with a lemma that shows in particular that the set of points $x \in M$ for which $v_F(x)$ exists is $f$-invariant, and that the norm $\|v_F(x)\|$ is constant along each orbit.
 
\begin{lemma}\label{orbitvectors}
Let $H$ be a Hadamard manifold and $\{x_n\}_{n \ge 0}, \{y_n\}_{n \ge 0} \subset H$ be such that:
\[d(x_n,y_n) = o(n)\text{ when }n\to +\infty\]

If there exists $v \in T_{x_0}H$ such that
\[d(x_n,\exp(nv)) = o(n)\text{ when }n \to +\infty\]
then there exists a unique vector $w \in T_{y_0}H$ such that:
\[d(y_n,\exp(nw)) = o(n)\text{ when }n \to +\infty\]

In such a case $\|w\| = \|v\|$ and if $\|v\| \neq 0$ then $v/\|v\|$ is asymptotic to $w/\|w\|$ (see Appendix). 
\end{lemma}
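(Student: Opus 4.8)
The plan is to construct $w$ directly as the initial velocity of a geodesic ray obtained from the geodesics joining $y_0$ to the points $\exp(nv)$, using the convexity and completeness of a Hadamard manifold. Write $z_n = \exp(nv)$, so that by hypothesis $d(x_n, z_n) = o(n)$, and since $d(x_n, y_n) = o(n)$ the triangle inequality gives $d(y_n, z_n) = o(n)$ as well. If $v = 0$ then $z_n = x_0$ is bounded, so $d(y_n, y_0) \le d(y_n, z_n) + d(z_n, y_0) = o(n)$, and $w = 0 \in T_{y_0}H$ works; uniqueness is immediate since any two distinct geodesic rays from $y_0$ diverge (Lemma \ref{convexity}, exactly as in the uniqueness argument of the Main Theorem). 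So assume $v \ne 0$, and set $R = \|v\| > 0$.

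Next I would rescale to unit speed: let $\sigma:[0,+\infty) \to H$ be the geodesic ray with $\sigma(0) = x_0$, $\sigma(t) = \exp((t/R)v)$, so $z_n = \sigma(Rn)$. For each $n$ let $\tau_n:[0,+\infty) \to H$ be the unit-speed geodesic ray with $\tau_n(0) = y_0$ and passing through $z_n$ — this exists and is unique because in a Hadamard manifold any two points lie on a unique geodesic. I claim $\{\tau_n'(0)\}$ converges in $T_{y_0}H$. The key estimate is a comparison of $\tau_n$ with $\sigma$: since $d(y_0, z_n) = d(x_0, z_n) + o(n) = Rn + o(n)$ (using $d(x_0,y_0)$ bounded and the triangle inequality together with $d(y_n,z_n) = o(n)$, actually more directly $|d(y_0,z_n) - d(x_0,z_n)| \le d(x_0,y_0)$), the geodesic $\tau_n$ reaches near $z_n$ at parameter $\approx Rn$. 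Applying Lemma \ref{convexity} to the pair of geodesics $\sigma$ and $\tau_n$ (both eventually near $z_n$), the function $t \mapsto d(\sigma(t), \tau_n(t))$ is convex, vanishes-to-within-$d(x_0,y_0)$ at $t=0$ and is $o(n)$ at $t \approx Rn$; convexity then forces $d(\sigma(t),\tau_n(t))$ to stay controlled on $[0,Rn]$, and comparing $\tau_n$ with $\tau_m$ similarly shows $d(\tau_n(1), \tau_m(1)) \to 0$ as $n,m \to \infty$. Hence $\tau_n(1)$ is Cauchy; by completeness it converges, and I define $\tau$ to be the unit-speed ray from $y_0$ through $\lim \tau_n(1)$, with $w = R\tau'(0)$. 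An Arzelà–Ascoli argument (as in the footnote to Lemma \ref{geometriclemma}) gives $\tau_n \to \tau$ uniformly on compacts.

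It then remains to verify $d(y_n, \exp(nw)) = d(y_n, \tau(Rn)) = o(n)$. Fix $\epsilon > 0$; convexity of $t \mapsto d(\sigma(t),\tau_n(t))$ applied on $[0, d(y_0,z_n)]$, where it equals $d(x_0,y_0)$-ish at the left end and $o(n)$ at the right end, gives $d(\sigma(Rn), \tau_n(Rn)) \le \epsilon n$ for $n$ large; letting $n' \to \infty$ with $\tau_{n'} \to \tau$ uniformly on $[0, Rn]$ yields $d(z_n, \tau(Rn)) \le \epsilon n$, hence $d(\sigma(Rn),\tau(Rn)) = o(n)$, so $d(y_n, \tau(Rn)) \le d(y_n, z_n) + d(z_n, \tau(Rn)) = o(n)$. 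The norm identity $\|w\| = \|v\| = R$ holds by construction since $\tau$ is unit speed, and the asymptoticity of $v/\|v\| = \sigma'(0)$ and $w/\|w\| = \tau'(0)$ follows because $d(\sigma(t),\tau(t)) = o(t)$ (indeed it is bounded, by convexity), which is the definition of asymptotic rays. Uniqueness of $w$ is again the divergence of distinct rays from $y_0$. The main obstacle is the Cauchy/convergence step for $\tau_n'(0)$: one must be careful that the $o(n)$ control of $d(\sigma(Rn),\tau_n(Rn))$, combined with convexity, genuinely forces $d(\tau_n(1),\tau_m(1)) \to 0$ rather than merely boundedness — this is where the linear (not sublinear) growth of the parameter interval $[0,Rn]$ against the $o(n)$ endpoint defect is essential, exactly as in the proof of Lemma \ref{geometriclemma}.
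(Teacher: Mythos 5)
Your proof is correct, but it takes a different route from the paper's.  The paper disposes of the existence and uniqueness of the asymptote $w$ in one stroke by citing Eberlein--O'Neill \cite[Remark~4, p.~48]{MR0336648}: given the ray $t\mapsto\exp(tv)$ there is a unique ray from $y_0$ at bounded distance from it, and then the proof is just two chains of triangle inequalities (one to get $\|v\|=\|w\|$, one to get $d(y_n,\exp(nw))=o(n)$).  You instead re-derive this fact from scratch: you build the rays $\tau_n$ from $y_0$ through $z_n=\exp(nv)$, control $d(\sigma(t),\tau_n(t))$ on $[0,Rn]$ by convexity, and run the Cauchy argument of Lemma~\ref{geometriclemma} to extract $\tau = \lim\tau_n$ and hence $w$.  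Both approaches hinge on the same intermediate estimate $d(\sigma(t),\tau(t))=O(1)$, and the final verification is the same chain $d(y_n,\exp(nw))\le d(y_n,x_n)+d(x_n,z_n)+d(z_n,\tau(Rn))$; the trade-off is that your argument is self-contained (and makes the mechanism of convexity doing all the work explicit), while the paper's is shorter by outsourcing the existence of the asymptotic ray.  Two small points worth tightening: (i) at $t=Rn$ the defect $d(\sigma(Rn),\tau_n(Rn))$ is in fact bounded by $d(x_0,y_0)$, i.e.\ $O(1)$, not merely $o(n)$ as you wrote, and convexity then gives the clean estimate $d(\tau_n(1),\tau_m(1))\le 2d(x_0,y_0)/(R\min(n,m))$, so there is no need for the ``$\epsilon n$'' reasoning in the verification step; (ii) in the $v=0$ case it is worth spelling out, as you implicitly do, that any $w$ with $\|w\|>0$ would force $d(y_n,\exp(nw))\ge n\|w\|-o(n)$, which is not $o(n)$.
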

\begin{proof}
By \cite[Remark 4, p.48]{MR0336648} there is a unique vector $w \in T_{y_0}H$ such that:
\[d(\exp(tv),\exp(tw)) = O(1)\text{ when }t \to +\infty\]

By lemma \ref{convexity} (see also the discussion following definition \ref{asymptotictangentvectors}) $w$ is also the unique vector in $T_{y_0}H$ satisfying:
\[d(\exp(tv),\exp(tw)) = o(t)\text{ when }t \to +\infty\]

Notice that:
\[t\|v\| = d(x_0,\exp(tv)) \le d(x_0,y_0) + d(y_0,\exp(tw)) + d(\exp(tw),\exp(tv)) = t\|w\| + o(t)\]
which implies that $\|v\| \le \|w\|$ and hence (by symmetry) that $\|v\| = \|w\|$ as required.

Assuming $\|v\| = \|w\| \neq 0$ it follows from the bound on $d(\exp(tv),\exp(tw))$ that $v/\|v\|$ and $w/\|w\|$ are asymptotic.

To conclude we estimate $d(y_n,\exp(nw))$ as follows: 
\[d(y_n,\exp(nw)) \le d(y_n,x_n) + d(x_n,\exp(nv)) + d(\exp(nv),\exp(nw)) = o(n)\]
\end{proof}

\begin{corollary}\label{orbitvectors2}
Suppose $M,\widetilde{M},f$ and $F$ satisfy the hypothesis of Theorem \ref{maintheorem} and $x \in M$ has a rotation vector $v_F(x)$.  Then $v_F(f^k(x))$ exists and satisfies $\|v_F(f^k(x))\| = \|v_F(x)\|$ for all $k \in \mathbb{Z}$.
\end{corollary}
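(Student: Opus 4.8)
The plan is to deduce everything from Lemma \ref{orbitvectors} applied inside the Hadamard manifold $\widetilde{M}$, using the reformulation of ``$x$ has a rotation vector'' supplied by Definition \ref{rotationvector}: if $\tilde{x}$ is a lift of $x$ and $\tilde{v}\in T_{\tilde{x}}\widetilde{M}$ is the lift of $v_F(x)$ (so $\|\tilde{v}\|=\|v_F(x)\|$), then $d(F^n\tilde{x},\exp_{\tilde{x}}(n\tilde{v}))=o(n)$. Fix $k\in\mathbb{Z}$ and an arbitrary lift $\tilde{z}$ of $f^k(x)$. Since $\pi\circ F^{-k}=f^{-k}\circ\pi$, the point $\tilde{x}:=F^{-k}\tilde{z}$ is a lift of $x$, and $\{F^n\tilde{z}\}_{n\ge 0}=\{F^{n+k}\tilde{x}\}_{n\ge 0}$.

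The first step is to show that these two orbits track each other sublinearly, i.e.\ $d(F^n\tilde{x},F^n\tilde{z})=o(n)$. Because consecutive points $\exp_{\tilde{x}}(m\tilde{v})$ and $\exp_{\tilde{x}}((m+1)\tilde{v})$ of the minimizing geodesic ray $t\mapsto\exp_{\tilde{x}}(t\tilde{v})$ are at distance $\|\tilde{v}\|$, the triangle inequality together with $d(F^m\tilde{x},\exp_{\tilde{x}}(m\tilde{v}))=o(m)$ gives $d(F^m\tilde{x},F^{m+1}\tilde{x})=o(m)$; and since $d(F^n\tilde{x},F^{n+k}\tilde{x})$ is bounded by a sum of at most $|k|$ consecutive estimates of this form with indices near $n$, it follows that $d(F^n\tilde{x},F^{n+k}\tilde{x})=o(n)$, as wanted. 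This is the one place where it matters that $F$ is only a homeomorphism rather than an isometry, and it is exactly here that the assumption that $x$ already has a rotation vector is used (alternatively one could invoke the geodesic escort of $\{F^n\tilde{x}\}$ produced by Lemma \ref{vectorsvrsescorts}).

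With this in hand I would apply Lemma \ref{orbitvectors} with $x_n=F^n\tilde{x}$, $y_n=F^n\tilde{z}$ and $v=\tilde{v}$: the two hypotheses $d(x_n,y_n)=o(n)$ and $d(x_n,\exp(n\tilde{v}))=o(n)$ are precisely what was just checked, so there is a unique $w\in T_{\tilde{z}}\widetilde{M}$ with $d(F^n\tilde{z},\exp_{\tilde{z}}(nw))=o(n)$ and $\|w\|=\|\tilde{v}\|=\|v_F(x)\|$. Letting $u\in T_{f^k(x)}M$ be the image of $w$ under the differential of $\pi$, the lift starting at $\tilde{z}$ of the geodesic $t\mapsto\exp_{f^k(x)}(tu)$ equals $t\mapsto\exp_{\tilde{z}}(tw)$ because $\pi$ is a local isometry, so the condition in Definition \ref{rotationvector} is satisfied at the lift $\tilde{z}$; the same estimate holds for every other lift $\gamma\tilde{z}$ of $f^k(x)$ since $F$ commutes with the covering transformation $\gamma$ and $\gamma$ is an isometry. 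Hence $u$ is a rotation vector of $f^k(x)$ with $\|v_F(f^k(x))\|=\|w\|=\|v_F(x)\|$, which proves the corollary. (Uniqueness of $v_F(f^k(x))$, though not part of the statement, follows as in the proof of Theorem \ref{maintheorem} from Lemma \ref{convexity}.)
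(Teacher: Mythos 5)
Your proof is correct and follows essentially the same route as the paper's: both reduce to Lemma \ref{orbitvectors} by first observing that $d(F^n\tilde{x},F^{n+k}\tilde{x})=o(n)$, a consequence of the triangle inequality through the geodesic escort. The paper simply asserts this sublinear tracking as immediate and applies Lemma \ref{orbitvectors} directly, whereas you spell out the intermediate single-step bound $d(F^m\tilde{x},F^{m+1}\tilde{x})=o(m)$; the content is the same.
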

\begin{proof}
Let $\tilde{x}$ be a lift of $x$.  By definition of $v_F(x)$ the lift $\tilde{\alpha}$ of $t \mapsto \exp(tv_F(x))$ starting at $\tilde{x}$ satisfies:
\[d(\tilde{\alpha}(n),F^n\tilde{x}) = o(n)\text{ when }n \to +\infty\]

As an immediate consequence for any $k \in \mathbb{Z}$ one has:
\[d(F^{n}\tilde{x},F^{n+k}\tilde{x}) = o(n)\text{ when }n \to +\infty\]

The claim now follows by lemma \ref{orbitvectors} and the fact (observed in section \ref{statement}) that the existence of $v_F(f^k(x))$ is a property of the $F$-orbit of any single lift of $f^k(x)$.
\end{proof}

Since we are studying invertible dynamical systems it is natural to ask what the relationship is between rotation vectors for the system $f$ and its inverse $f^{-1}$.  In this direction we will prove the following theorem.

\begin{theorem}\label{past}
Let $M,\widetilde{M},f,F$ and $\mu$ satisfy the hypothesis of Theorem \ref{maintheorem}.  Then for almost every $x \in M$ it holds that $\|v_F(x)\| = \|v_{F^{-1}}(x)\|$ and either $v_F(x) = v_{F^{-1}}(x) = 0$ or $v_F(x) \neq v_{F^{-1}}(x)$.
\end{theorem}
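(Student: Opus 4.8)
The plan is to compare the forward orbit $\{F^n\tilde x\}_{n\ge 0}$ and the backward orbit $\{F^{-n}\tilde x\}_{n\ge 0}$ of a single lift $\tilde x$ of $x$, and to show that if both have positive rate of escape then the two geodesic escorts they produce (via Lemma \ref{vectorsvrsescorts} and Lemma \ref{geometriclemma}) cannot coincide. First I would note that $f^{-1}$ is again a homeomorphism isotopic to the identity, $F^{-1}$ is an admissible lift of it, and $\mu$ is $f^{-1}$-invariant; moreover the integrability hypothesis passes from $F$ to $F^{-1}$ since $d(\tilde x,F^{-1}\tilde x) = d(F\tilde y, \tilde y)$ where $\tilde y = F^{-1}\tilde x$ and $\mu$ is invariant, so $\int_M d(\tilde x,F^{-1}\tilde x)\,\mathrm d\mu(x) = \int_M d(\tilde x,F\tilde x)\,\mathrm d\mu(x) < +\infty$. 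Hence Theorem \ref{maintheorem} applies to both $F$ and $F^{-1}$, giving rotation vectors $v_F(x)$ and $v_{F^{-1}}(x)$ for a.e.\ $x$.

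The equality of norms I would get from the fact that, by Kingman's theorem applied to the subadditive process $a_x(m,n) = d(F^m\tilde x, F^n\tilde x)$ and its time-reversed counterpart, the rate of escape of $\{F^n\tilde x\}$ equals that of $\{F^{-n}\tilde x\}$ for $\mu$-a.e.\ $x$. The cleanest route: the function $R$ from Corollary \ref{rateofescapeforhomeos} is invariant, so $\int_M R\,\mathrm d\mu$ computes the same asymptotic average $\lim_n \tfrac1n\int_M d(\tilde x,F^n\tilde x)\,\mathrm d\mu(x)$, and by the change of variables $x\mapsto f^n(x)$ one has $\int_M d(\tilde x,F^n\tilde x)\,\mathrm d\mu = \int_M d(\tilde x, F^{-n}\tilde x)\,\mathrm d\mu$; combined with ergodic decomposition (or simply applying the same argument on each ergodic component, on which $R$ is a.e.\ constant) this forces the forward and backward rates of escape to agree a.e. By Lemma \ref{vectorsvrsescorts} these rates are $\|v_F(x)\|$ and $\|v_{F^{-1}}(x)\|$ respectively, giving $\|v_F(x)\| = \|v_{F^{-1}}(x)\|$.

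For the dichotomy, the case $\|v_F(x)\| = 0$ is immediate: then both vectors vanish. So assume $R(x) = \|v_F(x)\| = \|v_{F^{-1}}(x)\| > 0$ and both orbits are aligned (Theorem \ref{alignmentofrandomsequences}), hence escorted by geodesics $\tilde\alpha,\tilde\beta : [0,+\infty)\to\widetilde M$ with $\tilde\alpha(0)=\tilde\beta(0)=\tilde x$, where $\tilde\alpha$ lifts $t\mapsto\exp_x(tv_F(x))$ and $\tilde\beta$ lifts $t\mapsto\exp_x(tv_{F^{-1}}(x))$. Suppose for contradiction that $v_F(x) = v_{F^{-1}}(x)$, i.e.\ $\tilde\alpha = \tilde\beta$. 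Then from the escort property, writing $R = \|v_F(x)\|$, one has $d(\tilde\alpha(Rn), F^n\tilde x) = o(n)$ and $d(\tilde\alpha(Rn), F^{-n}\tilde x) = o(n)$, hence by the triangle inequality $d(F^n\tilde x, F^{-n}\tilde x) = o(n)$. Now apply $F^n$ (an isometry, since $F$ commutes with covering transformations and descends to a homeomorphism... — more carefully: $F$ need not be an isometry, so this step needs adjustment) — I expect this naive move to fail, and this is the main obstacle.

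The correct route around the obstacle is to work with the geodesic directly rather than translating by $F^n$. Set $y_n = F^{-n}\tilde x$ and $z_n = F^n\tilde x$. From $d(\tilde\alpha(Rn), z_n) = o(n)$ and $d(\tilde\alpha(Rn), y_n) = o(n)$ we get that $\{y_n\}$ and $\{z_n\}$ stay within $o(n)$ of the \emph{same} geodesic ray $\tilde\alpha$. But consider the doubly-infinite orbit $(\dots, F^{-1}\tilde x, \tilde x, F\tilde x, \dots)$: reindex it as $w_k = F^{k-n_0}\tilde x$ and observe that, for any fixed $m$, $d(F^{-m}\tilde x, F^{m}\tilde x) \le 2m\cdot\sup\{d(\tilde y,F\tilde y)\}$ fails to be controlled in general, so instead I would use subadditivity of $k\mapsto d(\tilde x, F^k\tilde x)$ along the full orbit together with the conclusion $d(F^n\tilde x,F^{-n}\tilde x)=o(n)$ to show the \emph{bi-infinite} orbit has a well-defined two-sided linear structure; then the escort of the forward orbit and the escort of the backward orbit, being the same ray $\tilde\alpha$, would force $d(\tilde x, F^{-n}\tilde x) + d(\tilde x, F^n\tilde x) = d(F^{-n}\tilde x, F^n\tilde x) + o(n) = o(n)$, contradicting that both terms on the left grow like $Rn$ with $R>0$. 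I would carry this out by: (i) establishing $d(F^n\tilde x, F^{-n}\tilde x)=o(n)$ from $\tilde\alpha=\tilde\beta$; (ii) using the reverse triangle inequality $d(F^n\tilde x,F^{-n}\tilde x) \ge d(\tilde x, F^n\tilde x) + d(\tilde x,F^{-n}\tilde x) - 2d(\tilde x,\tilde x)$ — wait, that gives $\ge d(\tilde x,F^n\tilde x)+d(\tilde x,F^{-n}\tilde x)$ only if $\tilde x$ lies on a geodesic between them, which is exactly what $\tilde\alpha=\tilde\beta$ with opposite-side placement would give, and here both $z_n$ and $y_n$ lie \emph{on the same side} of $\tilde x$ along $\tilde\alpha$, so in fact $d(z_n,y_n) = |Rn - Rn| + o(n) = o(n)$ consistently — the contradiction must instead come from distinguishing $v_F$ and $v_{F^{-1}}$ as vectors pointing the orbit in a direction: the forward escort $\tilde\alpha$ satisfies $d(\tilde\alpha(Rn),F^n\tilde x)=o(n)$ while the backward one satisfies $d(\tilde\beta(Rn),F^{-n}\tilde x)=o(n)$, and if $\tilde\alpha=\tilde\beta$ then $F^n\tilde x$ and $F^{-n}\tilde x$ are both $o(n)$-close to $\tilde\alpha(Rn)$, so the doubly-infinite orbit returns near itself; I then apply this at every point of the orbit and invoke invariance of $\mu$ to derive that the orbit is a.e.\ bounded, whence $R(x)=0$, contradicting $R(x)>0$.

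So the genuinely delicate point, which I will treat carefully, is converting "$F^n\tilde x$ and $F^{-n}\tilde x$ are both $o(n)$-close to $\tilde\alpha(Rn)$" into a contradiction with linear escape. The clean way: apply the already-proved part of this very theorem to the shifted orbit — by Corollary \ref{orbitvectors2}, $v_F(f^{2n}(x))$ has the same norm $R$ and its lift-orbit is $(F^{k+2n}\tilde x)_k$; comparing the escort of this at base point $F^{2n}\tilde x$ with $\tilde\alpha$ and using $d(F^{2n}\tilde x, \tilde x) = d(F^n\tilde x, F^{-n}\tilde x) + o(n) = o(n)$ (after translating indices) shows $F^{2n}\tilde x$ drifts sublinearly from $\tilde x$, i.e.\ $d(\tilde x, F^{2n}\tilde x) = o(n)$, directly contradicting $d(\tilde x,F^{2n}\tilde x) = 2Rn + o(n)$ with $R > 0$. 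This finishes the dichotomy, and together with the norm equality proves the theorem.
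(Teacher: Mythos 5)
Your verification that Theorem \ref{maintheorem} applies to $F^{-1}$ is fine, and your argument for the norm equality $\|v_F(x)\|=\|v_{F^{-1}}(x)\|$ is correct and somewhat more elementary than the paper's. Using Corollary \ref{rateofescapeforhomeos} you get, for every invariant set $A$, $\int_A R_+\,\mathrm d\mu = \lim_n \tfrac1n\int_A d(\tilde x, F^n\tilde x)\,\mathrm d\mu$, and the change of variables $x\mapsto f^n(x)$ together with lift-independence of these distances gives $\int_A d(\tilde x, F^n\tilde x)\,\mathrm d\mu = \int_A d(\tilde x, F^{-n}\tilde x)\,\mathrm d\mu$; since $R_\pm$ are both invariant functions, $\int_A R_+ = \int_A R_-$ for every invariant $A$ forces $R_+=R_-$ a.e. (no ergodic decomposition needed). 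This is a genuine alternative to the paper's route, which instead introduces the Busemann cocycle $g(x)=B_x(\tilde x, F\tilde x)$ and reads off both norms from its forward and backward Birkhoff averages.

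The dichotomy is where your proof breaks. You correctly derive, for $x$ with $v_F(x)=v_{F^{-1}}(x)\neq 0$, that $d(F^n\tilde x, F^{-n}\tilde x)=o(n)$ since both points are $o(n)$-close to $\tilde\alpha(Rn)$. But the step $d(F^{2n}\tilde x,\tilde x)=d(F^n\tilde x,F^{-n}\tilde x)+o(n)$, which you obtain by ``translating indices,'' is unjustified: $F$ is not an isometry (you flag this yourself earlier), so $d(F^{2n}\tilde x,\tilde x)$ and $d(F^n\tilde x,F^{-n}\tilde x)$ are distances between unrelated pairs of points and there is no reason for them to differ by $o(n)$. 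The alternative you hint at---applying the escort property at the shifted base point $y_n=f^{-n}(x)$ to get $d(F^n\tilde y_n,F^{-n}\tilde y_n)=o(n)$, i.e.\ $d(\tilde x, F^{-2n}\tilde x)=o(n)$---fails for a subtler reason: the $o(n)$ bound in $d(\tilde\alpha_y(Rm),F^m\tilde y)=o(m)$ is for fixed $y$ as $m\to\infty$, and it is not uniform in $y$. Taking $m=n$ and $y=y_n$ simultaneously is a diagonal move that the escort property does not support. So neither version of your derivation of $d(\tilde x, F^{2n}\tilde x)=o(n)$ goes through, and without it you have no contradiction (the reverse triangle inequality only gives $d(F^n\tilde x,F^{-n}\tilde x)\ge|Rn-Rn|+o(n)=o(n)$, which is consistent). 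The paper's proof resolves exactly this difficulty by passing to the Busemann cocycle $g(x)=B_x(\tilde x, F\tilde x)$, which encodes the \emph{direction} of displacement along $\tilde\alpha$ as a signed, integrable, Birkhoff-additive quantity. The identity $B_x(p,q)=-B_x(q,p)$ then forces $\|v_F(x)\|=-\|v_{F^{-1}}(x)\|$ when the two rotation vectors coincide, which immediately gives zero. That directional bookkeeping is the missing ingredient in your attempt; raw distances are symmetric and cannot distinguish the orbit going out from coming back, which is precisely what you need to rule out.
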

\begin{proof}
Let $x \mapsto \tilde{x}$ be a measurable function such that $\tilde{x}$ projects to $x$ for all $x \in M$.  Consider for each $x \in M$ the Busemann function on $\widetilde{M}$ associated to the lift $\tilde{\alpha}$ of the geodesic $t \mapsto \exp(tv_F(x))$ starting at $\tilde{x}$:
\[B_x(p,q) = \lim_{t \to +\infty}d(p,\tilde{\alpha}(t)) - d(\tilde{\alpha}(t),q)\]

Notice that if $\tilde{\alpha}$ is non-constant then $B_x$ coincides with the Busemann function associated to the vector $\tilde{\alpha}'(0)/\|\tilde{\alpha}'(0)\|$ as defined in the Appendix.  The function $B_x$ satisfies $B_x(p,q) + B_x(q,r) = B_x(p,r)$ for all $p,q,r \in \widetilde{M}$.  Also, one has the inequality $|B_x(p,q)| \le d(p,q)$.

With this notation we define a measurable function $g:M \to \mathbb{R}$ by the formula $g(x) = B_x(\tilde{x},F\tilde{x})$.  Since $|g(x)| \le d(\tilde{x},F\tilde{x})$ the function $g$ is integrable.  

Also, if $\rho$ is a covering transformation then, because $F$ commutes with $\rho$, it follows that:
\[g(x) = \lim_{t \to +\infty}d(\rho(\tilde{x}),\rho\circ\tilde{\alpha}(t)) - d(\rho\circ\tilde{\alpha}(t), F(\rho(\tilde{x})))\]
so that the value of $g(x)$ is independent of the chosen lift of $x$.

We will now show that $\|v_F(x)\| \le \|v_{F^{-1}}(x)\|$ for almost all $x$.  If $v_F(x) = 0$ this is satisfied so we may assume $v_F(x) \neq 0$ in what follows.  

By lemma \ref{orbitvectors} and corollary \ref{orbitvectors2} it follows for all $k \in \mathbb{Z}$ that the lift of $t \mapsto \exp(tv_F(f^k(x))$ starting at $F^k(\tilde{x})$ is asymptotic to $\tilde{\alpha}$.  By lemma \ref{asymptoticbusemannfunctions} and the fact that $g(f^k(x))$ may be calculated at any lift of $f^k(x)$ this implies that:
\[g(f^k(x)) = B_{f^k(x)}(\widetilde{f^k(x)},F\widetilde{f^k(x)}) = B_x(F^k\tilde{x},F^{k+1}\tilde{x})\text{ for all }k\in \mathbb{Z}\]

Consider now the Birkhoff averages of $g$.  One has the following equality:
\[\frac{1}{n}\sum_{k = 0}^{n-1}g(f^k(x)) = \frac{1}{n}\sum_{k = 0}^{n-1}B_x(F^k\tilde{x},F^{k+1}\tilde{x}) = \frac{B_x(\tilde{x},F^n\tilde{x})}{n}\]

By definition of $v_F(x)$ one has that
\[d(\tilde{\alpha}(n),F^n\tilde{x}) = o(n)\]
from which one obtains the following:
\[n\|v_F(x)\| = B_x(\tilde{x},\tilde{\alpha}(n)) = B_x(\tilde{x},F^n\tilde{x}) + B_x(F^n\tilde{x},\tilde{\alpha}(n)) = B_x(\tilde{x},F^n\tilde{x}) + o(n)\]

Hence the limit $\tilde{g}$ of the Birkhoff averages of $g$ equals $\|v_F(x)\|$ almost surely.

But by Birkhoff's theorem one also obtains:
\begin{align*}
\|v_F(x)\| &= \lim_{n \to +\infty}\frac{1}{n}\sum_{k = 1}^{n}g(f^{-k}(x)) = \lim_{n \to +\infty}\frac{B_x(F^{-n}\tilde{x},\tilde{x})}{n}
\\&\le \lim_{n \to +\infty}\frac{d(F^{-n}\tilde{x},\tilde{x})}{n} = \|v_{F^{-1}}(x)\|
\end{align*}

This establishes that $\|v_F(x)\| \le \|v_{F^{-1}}(x)\|$.  By applying the same argument to $F^{-1}$ it follows that $\|v_F(x)\| = \|v_{F^{-1}}(x)\|$ almost everywhere.

Suppose now that for some $x$ we have $v_F(x) = v_{F^{-1}}(x)$.  Then $B_x$ is the Busemann function associated to the geodesics given by both rotation vectors.  In this case the above analysis and the property $B_x(p,q) = -B_x(q,p)$ give:
\[\|v_F(x)\| = \lim_{n \to +\infty}\frac{B_x(F^{-n}\tilde{x},\tilde{x})}{n} = -\lim_{n \to +\infty}\frac{B_x(\tilde{x},F^{-n}\tilde{x})}{n} = - \|v_{F^{-1}}(x)\|\]

Since both sides are non-negative this implies that $v_F(x) = v_{F^{-1}}(x) = 0$.
\end{proof}

On strong visibility manifolds (e.g. manifolds with Anosov geodesic flow, see Appendix) we obtain the following corollary.
\begin{corollary}\label{biinfinite}
Let $M,\widetilde{M},f,F$ and $\mu$ satisfy the hypothesis of Theorem \ref{maintheorem} and suppose $M$ is a strong visibility manifold.  Then for almost every $x \in M$ with $v_F(x) \neq 0$ it holds that for each lift $\tilde{x}$ of $x$ there exists a geodesic $\tilde{\alpha}:\mathbb{R} \to \widetilde{M}$ such that:
\[d(\tilde{\alpha}(n),F^n\tilde{x}) = o(n)\text{ when }n \to \pm\infty\]
Furthermore $\tilde{\alpha}$ is unique up to reparametrizations by time translation. 
\end{corollary}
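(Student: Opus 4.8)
The plan is to obtain the desired bi-infinite geodesic by splicing together the forward and backward geodesic escorts of the orbit of a lift of $x$, using Theorem~\ref{past} to guarantee that the two escorts point to distinct points at infinity and the visibility hypothesis to connect those points into a single line.

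First I would observe that the hypotheses of Theorem~\ref{maintheorem} are also satisfied by $F^{-1}$: it is a lift of $f^{-1}$ (which is isotopic to the identity) commuting with all covering transformations, and since $d(\tilde{x},F^{-1}\tilde{x}) = \rho(f^{-1}(x))$, where $\rho(y) = d(\tilde{y},F\tilde{y})$ is the well-defined function from Section~\ref{statement}, the $f$-invariance of $\mu$ gives $\int_M d(\tilde{x},F^{-1}\tilde{x})\,\mathrm{d}\mu < +\infty$. Intersecting the full-measure sets furnished by Theorem~\ref{maintheorem} applied to $F$, by Theorem~\ref{maintheorem} applied to $F^{-1}$, and by Theorem~\ref{past}, one obtains a set of full measure on which, whenever $v_F(x) \neq 0$, both $v_F(x)$ and $v_{F^{-1}}(x)$ exist, $R := \|v_F(x)\| = \|v_{F^{-1}}(x)\| > 0$, and $v_F(x) \neq v_{F^{-1}}(x)$.

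Fix such an $x$ and any lift $\tilde{x}$. By Lemma~\ref{vectorsvrsescorts} applied to $F$ and to $F^{-1}$ the sequences $\{F^n\tilde{x}\}_{n \ge 0}$ and $\{F^{-n}\tilde{x}\}_{n \ge 0}$ are escorted by unit-speed geodesic rays $\tilde{\alpha}_+$ and $\tilde{\alpha}_-$ emanating from $\tilde{x}$ in the directions of $v_F(x)$ and $v_{F^{-1}}(x)$; let $\xi_+,\xi_- \in \partial\widetilde{M}$ be their endpoints at infinity. Since in a Hadamard manifold a point and a point at infinity are joined by a unique geodesic ray, $v_F(x) \neq v_{F^{-1}}(x)$ (so the two directions differ) forces $\xi_+ \neq \xi_-$. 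Using the visibility property of $\widetilde{M}$ (see Appendix) I would then pick a geodesic line $\tilde{\alpha}:\mathbb{R} \to \widetilde{M}$, parametrized proportionally to arclength with speed $R$, such that $\tilde{\alpha}(+\infty) = \xi_+$ and $\tilde{\alpha}(-\infty) = \xi_-$. For the estimate at $+\infty$: $\tilde{\alpha}_+$ and the forward sub-ray of $\tilde{\alpha}$ are asymptotic, so by Lemma~\ref{convexity} the distance between them is a bounded convex, hence non-increasing, function of the parameter; together with $d(\tilde{x},F^n\tilde{x}) = Rn + o(n)$ and the escort inequality of Lemma~\ref{vectorsvrsescorts} this yields $d(\tilde{\alpha}(n),F^n\tilde{x}) = o(n)$. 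The symmetric argument with $\tilde{\alpha}_-$ and the backward sub-ray of $\tilde{\alpha}$ gives $d(\tilde{\alpha}(-n),F^{-n}\tilde{x}) = o(n)$, so $\tilde{\alpha}$ is a geodesic of the required kind.

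For uniqueness, if $\tilde{\beta}:\mathbb{R} \to \widetilde{M}$ is another geodesic with $d(\tilde{\beta}(n),F^n\tilde{x}) = o(|n|)$ as $n \to \pm\infty$, the triangle inequality gives $d(\tilde{\alpha}(n),\tilde{\beta}(n)) = o(|n|)$ at both ends, whence by Lemma~\ref{convexity} the convex function $t \mapsto d(\tilde{\alpha}(t),\tilde{\beta}(t))$ is bounded, hence constant, and in particular $\tilde{\beta}(\pm\infty) = \xi_\pm$. In a visibility manifold two distinct geodesic lines cannot stay at a constant positive distance (this is the flat-strip situation excluded by visibility, exactly as in the proof of the Proposition in Section~\ref{statement}); therefore $\tilde{\alpha}$ and $\tilde{\beta}$ have the same image, and having the same ordered pair of endpoints at infinity and the same speed they differ only by a time translation. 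The step I expect to be most delicate is not any isolated computation but ensuring the two ``soft'' ingredients mesh: Theorem~\ref{past} is precisely what prevents $\xi_+$ and $\xi_-$ from coinciding (so that the forward and backward escorts genuinely assemble into a single line rather than a broken geodesic), and the visibility hypothesis is precisely what both supplies the connecting line and removes the flat-strip ambiguity in the uniqueness claim — the example of Section~\ref{sectionpast} shows neither input can be dropped.
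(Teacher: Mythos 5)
Your proof is correct and takes the same route as the paper's: Theorem~\ref{past} guarantees the past and future rotation vectors are distinct (so the corresponding unit tangent vectors at $\tilde{x}$ are distinct), and strong visibility joins their endpoints at infinity into a bi-infinite geodesic. The paper's (much terser) proof simply sets $\tilde{\alpha}(t) = \exp(tR\,\pi_h(v^+,v^-))$, letting Lemma~\ref{projectionalonghorospheres} package both the choice of connecting line and a canonical parametrization — a continuous choice that is really only needed later in Theorem~\ref{semiconjugacytheorem} — and cites the definition of strong visibility for uniqueness, whereas you observe that any parametrization yields the $o(n)$ estimate (since two same-speed rays to the same boundary point stay a bounded distance apart) and derive uniqueness via convexity and the flat-strip exclusion; both arguments are valid.
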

\begin{proof}
Set $R = \|v_F(x)\|$ and let $v^+,v^- \in T_{\tilde{x}}\widetilde{M}$ project to $v_F(x)/R$ and $v_{F^{-1}}(x)/R$ respectively.  Consider the geodesic defined by:
\[\tilde{\alpha}(t) = \exp(tRv)\]
where $v = \pi_h(v^+,v^-)$ and $\pi_h$ is the projection along horospheres (Lemma \ref{projectionalonghorospheres}).
The uniqueness claim follows from the definition of strong visibility (see Appendix).
\end{proof}

We will call a geodesic satisfying the properties of the above corollary a bi-infinite geodesic escort for the sequence $\{F^n\tilde{x}\}_{n \in \mathbb{Z}}$.

\subsection{An example without a bi-infinite geodesic escort}

The limitations of preceding corollary are illustrated by the following example (see \cite{MR1729880} Remark 2.4).  Consider the metric $\mathrm{d}s^2 = (1+e^{-y})^2dx^2 + dy^2$ in $\mathbb{R}^2$.  Direct calculation shows that this metric has non-positive curvature.  Note that (compare with Theorem 2.1 in \cite{MR1639844}), a curve $\alpha = (x,y): \mathbb{R} \to \mathbb{R}^2$ is a geodesic if and only if the following Hamiltonian equations are satisfied:

\[\left\{\begin{array}{c}x' = (1+e^{-y})^{-2}p_1\\y' = p_2\\p_1' = 0\\p_2' = -(1+e^{-y})^{-3}e^{-y}p_1^2\end{array}\right.\]

In particular $p_1 =  (1+e^{-y})^2x'$ is constant along each geodesic.  Since $E = (1+e^{-y})^2x'^2 + y'^2 = (1+e^{-y})^{-2}p_1^2 + y'^2$ (i.e. the square of the norm of the velocity) is also constant along each geodesic one can classify the asymptotic behavior of all geodesics as follows:
\begin{itemize}
\item If $E > p_1^2$ then $y'^2$ is bounded away from $0$.  This implies that $y'$ has constant sign and is bounded away from $0$.
\item If $E = p_1^2$ then $y'^2 = (1-(1+e^{-y})^{-2})E$ (e.g. it is never $0$ unless the geodesic is constant).  From this one deduces that $y$ is a bijection of the real line with $y' \to 0$ when $y \to +\infty$ and $|y'| \to +\infty$ when $y \to -\infty$.  Depending on sign of $y'(0)$ the function $y$ is decreasing or increasing.
\item If $E < p_1^2$ then $y$ is bounded and $y''$ is bounded from above by a negative constant.  In particular $y \to -\infty$ and $|y'| \to +\infty$ when $t \to \pm \infty$.
\end{itemize}

Consider the sequence $x_n = (n,0) \in \mathbb{R}^2$ for $n \in \mathbb{Z}$.  It follows from comparison with the usual Euclidean metric that the rate of escape of this sequence greater then or equal to $1$.  One can show that, in fact, the rate of escape is exactly $1$ by calculating the length of the polygonal path through the points $(0,0),(0,\log(n)), (n,\log(n))$ and $(n,0)$ (which turns out to be $n + 2\log(n) + 1$).

Since $\{x_n\}$ is the orbit of an isometry it is aligned by Lemma \ref{alignmentoforbitsofsemicontractions}.  Therefore one obtains by Lemma \ref{geometriclemma} that it must have a geodesic escort $\alpha$ starting at $(0,0)$.  The considerations above ensure that this geodesic must be of the type with $E = p_1^2$.  The geodesic $\beta$ symmetric to $\alpha$ with respect to the $y$-axis escorts the sequence $x_n$ when $n \to -\infty$ (because symmetry with respect to the $y$-axis is an isometry).  However there is no bi-infinite geodesic $\gamma$ with $d(\gamma(n),x_n) = o(n)$ when $n \to \pm \infty$, as one can see by checking each of the three types of geodesics discussed above.

Consider now $\widetilde{M} = \mathbb{R}^2$ with the metric under discussion, $F$ given by $F(x,y) = (x+1,y)$ and $M = \widetilde{M}/\Gamma$ where $\Gamma$ is the group generated by any non-trivial translation $\gamma(x,y) = (x+t,y)$.  One can take $f$ to be the projection of $F$ to $M$ and $\mu$ to be the projection of the uniform probability measure on $[0,t]\times\{0\}$.  With these definitions one is in hypothesis of Theorems \ref{maintheorem} and \ref{past}.  

Let $p$ be the projection to $M$ of $(0,0)$, the rotation vector $v_F(p)$ is the projection of $\alpha'(0)$ to $TM$ and the rotation vector $v_{F^{-1}}(p)$ is the projection of $\beta'(0)$.  Hence the two vectors are different (since $\alpha'(0)$ has a non-null $x$-component).  However one cannot apply Corollary \ref{biinfinite} and in fact, as we have shown, the conclusions of this corollary do not hold in this case.

This example also shows that the distance to the geodesic escort given by a rotation vector can be non-bounded.

\section{Rotation vectors for flows}
\label{sectionflows}
We will now develop the concept of rotation vectors for continuous flows.  Let us begin with the definition.

\begin{definition}[Rotation vector for a flow]
Let $M$ be a complete Riemannian manifold, $\widetilde{M}$ its universal covering space, $f: \mathbb{R}\times M \to M$ a continuous flow, and $F: \mathbb{R}\times\widetilde{M} \to \widetilde{M}$ its lift.

A rotation vector $v(x)$ of a point $x \in M$ is a vector in the tangent space $T_xM$ such that for any lift $\tilde{x}$ of $x$ the following holds:
\[d(\tilde{\alpha}(t),F^t\tilde{x}) = o(t)\text{ when }t\to +\infty\]
where $\tilde{\alpha}$ is the lift starting at $\tilde{x}$ of the geodesic $\alpha:[0,+\infty) \to M$ defined by $\alpha(t) = \exp_x(tv(x))$.
\end{definition}

An important difference between the case of flows and that of homeomorphisms is that for flows there is a unique lift which automatically commutes with all covering transformations.  Also the geodesic escort given by a rotation vector satisfies an asymptotic condition on all $t \in [0,+\infty)$ (as opposed to only integer values).  In order to deduce an existence theorem for flows from the corresponding theorem for homeomorphisms, it is necessary to control the variation of a trajectory between integer times (compare with the discussion in \cite{MR0356192} section 1.4).

\begin{theorem}[Existence of rotation vectors for flows]\label{flows}
Let $M$ be a complete connected Riemannian manifold with non-positive sectional curvature and let $\widetilde{M}$ be its universal covering space.  For each $x \in M$ let $\tilde{x} \in \widetilde{M}$ denote an arbitrary lift of $x$.

Suppose that $f: \mathbb{R}\times M \to M$ is a continuous flow with lift $F:\mathbb{R}\times \widetilde{M} \to \widetilde{M}$, and that $\mu$ is an $f$-invariant Borel probability measure satisfying the condition:
\[\int_M \sup_{0 \le s \le t \le 1}d(F^s\tilde{x},F^t\tilde{x})\mathrm{d}\mu(x) < +\infty\]

Then for $\mu$-almost every $x \in M$ there exists a unique rotation vector $v(x) \in T_xM$.
\end{theorem}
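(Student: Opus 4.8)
The plan is to deduce the statement from the already-proven time-one case (Theorem~\ref{maintheorem}) by applying it to the homeomorphism $f^1 = f(1,\cdot)$ with its lift $F^1 = F(1,\cdot)$, and then to upgrade the resulting ``integer-time'' geodesic escort to a genuine rotation vector whose asymptotic estimate holds for all real $t \to +\infty$. First I would observe that $F^1:\widetilde{M}\to\widetilde{M}$ is the time-one map of the lifted flow, hence automatically commutes with every covering transformation (the usual argument: for a covering transformation $\gamma$, the curve $t\mapsto \gamma^{-1}F^{-t}\gamma F^t(y)$ is a continuous path of covering transformations starting at the identity, so it is constant). Next I would check the integrability hypothesis of Theorem~\ref{maintheorem}: since $d(\tilde{x},F^1\tilde{x}) \le \sup_{0\le s\le t\le 1} d(F^s\tilde{x},F^t\tilde{x})$ pointwise, the hypothesis $\int_M \sup_{0\le s\le t\le 1}d(F^s\tilde{x},F^t\tilde{x})\,\mathrm{d}\mu(x) < +\infty$ gives $\int_M d(\tilde{x},F^1\tilde{x})\,\mathrm{d}\mu < +\infty$, and $\mu$ is $f^1$-invariant because it is flow-invariant. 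Theorem~\ref{maintheorem} therefore yields, for $\mu$-a.e.\ $x$, a unique vector $v(x)\in T_xM$ and a lift $\tilde{\alpha}$ of $t\mapsto\exp_x(tv(x))$ with $d(\tilde{\alpha}(n),F^n\tilde{x}) = o(n)$ along the integers.

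The main work is then to control the flow between integer times, i.e.\ to pass from $d(\tilde{\alpha}(n),F^n\tilde x)=o(n)$ for $n\in\mathbb{N}$ to $d(\tilde{\alpha}(t),F^t\tilde x)=o(t)$ for $t\in[0,+\infty)$. For this I would introduce the measurable function $\Phi(x) = \sup_{0\le s\le t\le 1} d(F^s\tilde{x},F^t\tilde{x})$, which by hypothesis is $\mu$-integrable and, since $\mu$ is $f^1$-invariant, satisfies $\Phi(f^n(x)) = o(n)$ for $\mu$-a.e.\ $x$ by the Borel--Cantelli argument underlying Birkhoff's theorem (equivalently, $\frac1n\sum_{k<n}\Phi(f^k x)$ converges, forcing $\frac1n\Phi(f^n x)\to 0$). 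Given $t\ge 0$, write $t = \lfloor t\rfloor + r$ with $r\in[0,1)$; then
\[
d(\tilde{\alpha}(t),F^t\tilde{x}) \le d(\tilde{\alpha}(t),\tilde{\alpha}(\lfloor t\rfloor)) + d(\tilde{\alpha}(\lfloor t\rfloor),F^{\lfloor t\rfloor}\tilde{x}) + d(F^{\lfloor t\rfloor}\tilde{x},F^t\tilde{x}).
\]
The first term is at most $r\|v(x)\| \le \|v(x)\|$, a constant; the second is $o(\lfloor t\rfloor) = o(t)$ by Theorem~\ref{maintheorem}; and the third is exactly $\Phi(f^{\lfloor t\rfloor}(x))$-type bounded, namely it is at most $\Phi(f^{\lfloor t\rfloor}(x)) = o(\lfloor t\rfloor) = o(t)$. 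Summing, $d(\tilde{\alpha}(t),F^t\tilde{x}) = o(t)$, so $v(x)$ is a rotation vector for the flow.

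Finally, uniqueness: if $w(x)$ were another rotation vector, then restricting the defining estimate to integer times shows $d(\exp_{\tilde x}(n\,\widetilde{w}),F^n\tilde x)=o(n)$, so $\exp_{\tilde x}(t\,\widetilde{w})$ is also an integer-time geodesic escort of $\{F^n\tilde x\}$; since no two geodesic rays from a common point in a Hadamard manifold stay sublinearly close (Lemma~\ref{convexity}, exactly as in the proof of Theorem~\ref{maintheorem}), we get $\widetilde{w} = \widetilde{v(x)}$ and hence $w(x) = v(x)$. I expect the only genuinely delicate point to be the step $\Phi(f^n(x)) = o(n)$ a.e.; this is standard (it is the same fact that makes a single term of a Birkhoff sum negligible relative to $n$ when the summand is integrable), and the rest is triangle-inequality bookkeeping that parallels the discussion in \cite{MR0356192}, section~1.4.
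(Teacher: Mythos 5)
Your proposal follows essentially the same argument as the paper: apply Theorem~\ref{maintheorem} to the time-one map $f^1$ (with lift $F^1$), then use the integrability of $g(x)=\sup_{0\le s\le t\le 1}d(F^s\tilde{x},F^t\tilde{x})$ together with Birkhoff's theorem to conclude $g(f^n x)=o(n)$ a.e.\ and hence control the gap between integer and real times. The only difference is that you spell out some details the paper leaves implicit, namely why $F^1$ commutes with covering transformations and why the Birkhoff Ces\`aro-type convergence forces $\frac1n g(f^n x)\to 0$.
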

\begin{proof}
By Theorem \ref{maintheorem} for almost every $x \in M$ there is a unique vector $v(x) \in T_xM$ such that:
\[d(\tilde{\alpha}(n),F^n\tilde{x}) = o(n) \text{ when } n \to +\infty, n \in \mathbb{Z}\]
where $\tilde{\alpha}$ is the lift starting at $\tilde{x}$ of $t \mapsto \exp(tv(x))$.

The function $g:M \to [0,+\infty)$ given by $g(x) = \sup_{0 \le s \le t \le 1}d(F^s\tilde{x},F^t\tilde{x})$ is well 
defined and continuous. In order to prove that
\[d(\tilde{\alpha}(t),F^t\tilde{x}) = o(t) \text{ when } t \to +\infty\]
if suffices to show that for almost all $x \in M$ it holds that $g(f^nx) = o(n)$ when $n \to +\infty$.  This follows from Birkhoff's ergodic theorem which is applicable because $g$ is integrable.
\end{proof}

\section{A Semi-conjugacy result for Flows on Fiber Bundles}
\label{sectionbundles}
In this section we will show how the existence of non-null rotation vectors provides a measurable semi-conjugacy between a given flow and the geodesic flow of a manifold.  This generalizes the semi-conjugacy statement of Theorem 4.1 in \cite{MR1802657} by replacing the restriction of constant negative curvature by that of strong visibility (see Appendix).

We will begin by defining rotation vectors through a map between two manifolds:
\begin{definition}
Let $\widetilde{M}$ be a Hadamard manifold, $\pi:\widetilde{M} \to M$ a Riemannian covering, $f: \mathbb{R}\times N \to N$ a continuous flow on a manifold $N$ and $h:N \to M$ a continuous map.

A rotation vector $v_h(x)$ of a point $x \in N$ is a vector in the tangent space $T_{h(x)}M$ such that for any lift $\tilde{p}$ of $p = h(x)$ the following holds:
\[d(\tilde{\alpha}(t),\tilde{\beta}(t)) = o(t)\text{ when }t\to +\infty\]
where $\tilde{\alpha}$ is the lift starting at $\tilde{p}$ of the geodesic $\alpha:[0,+\infty) \to M$ defined by $\alpha(t) = \exp_x(tv(x))$ and $\tilde{\beta}$ is the lift starting at the same point of the curve $\beta(t) = h(f^t(x))$.
\end{definition}

This definition applies in particular to the geodesic flow on $TM$ by taking the map $h:TM \to M$ to be the natural projection.  In this case the rotation vector associated to a point $v \in TM$ is simply $v$.

When $f:\mathbb{R}\times TM \to TM$ is the geodesic flow of a different metric then the one we are considering on $M$ rotation vectors provide a comparison between geodesics of the two metrics.  In general, if $p:E \to M$ is a fiber bundle projection and $f:\mathbb{R}\times E \to E$ is a continuous flow rotation vectors through $p$ provide a comparison between the trajectories of $f$ on the base manifold, and the geodesic flow of this manifold as in \cite{MR1802657}.

As another example suppose $f: \mathbb{R} \times S(N) \to S(N)$ is the geodesic flow on the unit tangent bundle $S(N)$ of Riemannian manifold $N$ and $h: S(N) \to M$ is the composition of the natural projection from $S(N)$ to $N$ and a map $h_1: N \to M$.  In this case the integral of the norm of the associated rotation vectors (i.e. the rate of escape) is equal to the `intersection' $i_{h_1}(g_N,g_M)$ of the geodesic flows $g_N$ and $g_M$ on $N$ and $M$ respectively introduced in \cite{MR1082022}.

The following existence theorem is a corollary of Theorem \ref{alignmentofrandomsequences} by arguments very close to the proof of Theorem \ref{maintheorem}.
\begin{theorem}
\label{bundleexistence}
Suppose $f:\mathbb{R}\times N \to N$ is a $C^1$ flow on a manifold $N$ preserving a Borel probability measure $\mu$ with compact support.  If $M$ is a connected complete Riemannian manifold with non-positive sectional curvature and $h: N \to M$ is a $C^1$ map then for $\mu$-almost every $x \in N$ there exists a unique rotation vector $v_h(x)$.
\end{theorem}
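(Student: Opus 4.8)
The plan is to run the machinery of Theorems \ref{maintheorem} and \ref{flows}, with the orbit $\{F^n\tilde x\}$ replaced by the lift of the curve $t\mapsto h(f^t(x))$. Fix the universal Riemannian covering $\pi:\widetilde M\to M$. Using a measurable selection theorem exactly as in the proof of Theorem \ref{maintheorem}, choose for each $x\in N$ a lift $\tilde p_x\in\widetilde M$ of $h(x)$ depending measurably on $x$, and let $\tilde\beta_x:[0,+\infty)\to\widetilde M$ be the lift of $t\mapsto h(f^t(x))$ starting at $\tilde p_x$; set $\phi(x)_n=\tilde\beta_x(n)$. Because deck transformations are isometries and path lifting is equivariant under changing the base point, the numbers $a_x(m,n)=d(\tilde\beta_x(m),\tilde\beta_x(n))$ are independent of the chosen lift, and moreover $a_x(m+1,n+1)=a_{f^1(x)}(m,n)$ (the lift of $u\mapsto h(f^u(f^1(x)))$ starting at $\tilde\beta_x(1)$ is $u\mapsto\tilde\beta_x(u+1)$, and it differs from $\tilde\beta_{f^1(x)}$ by a deck transformation). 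Thus, taking the base dynamics to be the time-one map $f^1$, the family $\{a(m,n)\}$ is a stationary subadditive process.

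Next I would verify the integrability hypotheses. The support $K=\mathrm{supp}(\mu)$ is compact and invariant under each homeomorphism $f^t$, so for $\mu$-a.e.\ $x$ the whole orbit $\{f^t(x)\}$ stays in $K$; the $C^1$ regularity of $f$ and $h$ then bounds $\|\tfrac{d}{dt}h(f^t(x))\|=\|Dh_{f^t(x)}(X(f^t(x)))\|$ by a constant $C$ over $K$, where $X$ is the generating vector field. Hence the length of $\tilde\beta_x$ over any unit interval is at most $C$, so $a_x(0,1)$ is essentially bounded and the process has nonnegative time constant. Corollary \ref{rateofescapeforhomeos} then yields an invariant integrable $R:N\to[0,+\infty)$ with $R(x)=\lim_n a_x(0,n)/n$ a.e. For a.e.\ $x$ with $R(x)=0$ we take $v_h(x)=0$. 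For a.e.\ $x$ with $R(x)>0$, Theorem \ref{alignmentofrandomsequences} shows $\{\tilde\beta_x(n)\}$ is aligned, so by Lemma \ref{geometriclemma} it is escorted by a geodesic $\tilde\gamma_x$ starting at $\tilde p_x$; we let $v_h(x)$ be the image under $d\pi$ of $R(x)\tilde\gamma_x'(0)$, which is lift-independent by equivariance. Arguing as in Lemma \ref{vectorsvrsescorts}, with $\tilde\alpha_x$ the lift of $t\mapsto\exp_{h(x)}(tv_h(x))$ from $\tilde p_x$, one gets $d(\tilde\alpha_x(n),\tilde\beta_x(n))=o(n)$ along integers.

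Then I would upgrade the integer-time estimate to all real $t$, as in the proof of Theorem \ref{flows}. The function $G(x)=\sup_{0\le s\le t\le 1}d(\tilde\beta_x(s),\tilde\beta_x(t))$ is measurable and bounded on $K$ by $C$, hence integrable, and the stationarity relation gives $d(\tilde\beta_x(n),\tilde\beta_x(t))\le G(f^n(x))$ for $t\in[n,n+1]$. Since the Birkhoff averages of $G$ converge, $G(f^n(x))/n\to 0$ a.e., so $d(\tilde\beta_x(n),\tilde\beta_x(t))=o(t)$; combining with $d(\tilde\alpha_x(n),\tilde\beta_x(n))=o(n)$ and $d(\tilde\alpha_x(n),\tilde\alpha_x(t))\le\|v_h(x)\|$ for $t\in[n,n+1]$ yields $d(\tilde\alpha_x(t),\tilde\beta_x(t))=o(t)$, i.e.\ $v_h(x)$ is a rotation vector. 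Uniqueness is immediate from Lemma \ref{convexity}: two geodesics with the same initial point that each stay within $o(t)$ of $\tilde\beta_x$ would stay within $o(t)$ of each other, hence coincide.

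The step requiring the most care is the second one: making precise the cocycle structure of the lifted paths (lift-independence of $a_x(m,n)$ and the shift relation $a_x(m+1,n+1)=a_{f^1(x)}(m,n)$) together with the measurability of $\phi$, and deducing essential boundedness of $a_x(0,1)$ and of $G$ from compactness of the support and $C^1$ regularity alone. Once these are in place, the remaining arguments are essentially transcriptions of the proofs of Theorems \ref{maintheorem} and \ref{flows}.
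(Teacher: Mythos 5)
Your proposal is correct and follows essentially the same route as the paper's proof: lift $t\mapsto h(f^t x)$ via a measurable selection, set up the subadditive process $a_x(m,n)=d(\tilde\beta_x(m),\tilde\beta_x(n))$ over the time-one map, verify stationarity via equivariance of path lifting under deck transformations, bound everything using $C=\max_{\mathrm{supp}(\mu)}\|Dh\,X\|$, and then apply Corollary \ref{rateofescapeforhomeos}, Theorem \ref{alignmentofrandomsequences}, and Lemma \ref{geometriclemma}. The only cosmetic difference is in passing from integer to real parameters: you define $G$ and invoke Birkhoff averaging (as in Theorem \ref{flows}), whereas the paper simply observes that $d(\alpha_x(t),\alpha_x(n))\le C$ uniformly for $t\in[n,n+1]$; since you already noted that $G$ is bounded by $C$ on the support, the Birkhoff step is redundant but harmless.
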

\begin{proof}
Let $\pi: \widetilde{M} \to M$ be the universal Riemannian covering of $M$ and let $L:M \to \widetilde{M}$ be measurable such that $\pi(L(x)) = x$ for all $x \in M$.

Define $\phi: N \to \widetilde{M}^{\mathbb{N}}$ so that, for all $x \in N$ and $n \in \mathbb{N}$, if $\alpha_x:[0,+\infty) \to \widetilde{M}$ is the lift starting at $L(h(x))$ of the curve $t \mapsto h(f^t x)$ then $\phi(x)_n = \alpha_x(n)$.

Notice that if $X: N \to TN$ is the vector field generating the flow $f$ and $C = \max_{supp(\mu)} \|Dh X\|$, where the maximum is taken on the support of $\mu$, then:
\[\int_N d(\phi(x)_0,\phi(x)_1)\mathrm{d}\mu(x) \le \int_N\int_0^1 \|Dh X(f^s x)\| \mathrm{d}s\mathrm{d}\mu(x) \le C\]

Also one has that
\[d(\phi(x)_{m+1},\phi(x)_{n+1}) = d(\alpha_x(m+1),\alpha_x(n+1)) = d(\alpha_{f^1(x)}(m),\alpha_{f^1(x)}(n))\]
where the second equality is obtained by applying the covering transformation taking $\alpha_x:[1,+\infty) \to \widetilde{M}$ to $\alpha_{f(x)}:[0,+\infty)\to \widetilde{M}$ (which are two lifts of $t \mapsto f^t(f^1(x))$).

Hence $\phi$ satisfies the hypothesis of Theorem \ref{alignmentofrandomsequences} and in consequence for $\mu$ almost every $x \in N$, either because the rate of escape is zero or, if it is positive, by Lemma \ref{geometriclemma}, there is a unique geodesic $\beta_x:[0,+\infty) \to \widetilde{M}$ starting at $L(h(x))$ and escorting the sequence $\alpha_x(n)$.  

For such a point $x \in N$ let $R$ be the rate of escape of $\{\alpha_x(n)\}_{n \ge 0}$.  To conclude that the projection of $R\beta'(0)$ to $TM$ is a rotation vector for $x$ through $h$ it suffices to note that if $n \le t \le n+1$ where $n \in \mathbb{N}$ one has:
\[d(\alpha_x(t),\alpha_x(n)) \le C\]

The uniqueness of $v_h(x)$ for the points where it is defined follows directly from the uniqueness statement for $\beta_x$. 
\end{proof}

We define the rotation vectors to the past as those for the time-reversed flow $g^tx = f^{-t}x$.  In the same spirit as Section \ref{sectionpast} we obtain the following:
\begin{theorem}\label{bundlepast}
With the hypothesis and notation of Theorem \ref{bundleexistence} and the additional assumption that $\mu$ is ergodic, let $v_h^-(x)$ denote the rotation vector to the past of a point $x \in N$ if it exists.  For almost every $x \in N$ it holds that $\|v_h(x)\| = \|v_h^-(x)\|$ and, either $v_h(x) = v_h^-(x) = 0$ or $v_h^-(x) \neq v_h(x)$.
\end{theorem}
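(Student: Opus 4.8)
The plan is to mirror the proof of Theorem \ref{past}, replacing the iteration $F^n\tilde{x}$ by the lifted trajectory $\alpha_x(n)$ coming from the map $h$, and replacing the isometries $F$ (and the covering transformations commuting with them) by the translation structure of the bundle construction. First I would fix a measurable lift $L\colon M \to \widetilde{M}$ as in the proof of Theorem \ref{bundleexistence}, and for those $x$ where $v_h(x)$ exists (a set of full measure) let $\tilde{\alpha}_x\colon[0,+\infty)\to\widetilde{M}$ be the geodesic ray starting at $L(h(x))$ in direction $v_h(x)$. Associated to this ray is the Busemann function $B_x(p,q) = \lim_{t\to+\infty}d(p,\tilde\alpha_x(t)) - d(\tilde\alpha_x(t),q)$, which satisfies the cocycle identity $B_x(p,q) + B_x(q,r) = B_x(p,r)$ and $|B_x(p,q)| \le d(p,q)$. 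I would then define the integrable function $g\colon N \to \mathbb{R}$ by $g(x) = B_x(\alpha_x(0),\alpha_x(1))$; integrability is immediate from $|g(x)| \le d(\alpha_x(0),\alpha_x(1)) \le C$ where $C$ is the bound from the proof of Theorem \ref{bundleexistence}.

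The key step is a change-of-basepoint identity showing that the Birkhoff sums of $g$ telescope. Using Lemma \ref{orbitvectors} and Corollary \ref{orbitvectors2} (which hold here as well, since the relevant sequences $\{\alpha_x(n)\}$ and $\{\alpha_{f^k(x)}(n)\}$ stay within bounded distance of each other and of their shifts), the geodesic ray defined by $v_h(f^k(x))$ lifts to a ray asymptotic to $\tilde\alpha_x$; hence by Lemma \ref{asymptoticbusemannfunctions} the Busemann functions agree up to an additive constant and one gets $g(f^k(x)) = B_x(\alpha_x(k),\alpha_x(k+1))$ for all $k \in \mathbb{Z}$. Summing, $\frac{1}{n}\sum_{k=0}^{n-1}g(f^k(x)) = \frac{1}{n}B_x(\alpha_x(0),\alpha_x(n))$, and since $d(\tilde\alpha_x(n),\alpha_x(n)) = o(n)$ by the definition of the rotation vector (here one uses the bound $d(\alpha_x(t),\alpha_x(n)) \le C$ for $n \le t \le n+1$ to pass between integer and real times), this limit equals $\|v_h(x)\|$. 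By the ergodic theorem applied to the time-reversed direction, the same Birkhoff average equals $\lim_n \frac{1}{n}B_x(\alpha_x(-n),\alpha_x(0)) \le \lim_n \frac{1}{n}d(\alpha_x(-n),\alpha_x(0)) = \|v_h^-(x)\|$, giving $\|v_h(x)\| \le \|v_h^-(x)\|$; applying the identical argument with the flow reversed yields equality. For the dichotomy: if $v_h(x) = v_h^-(x)$ for some such $x$, then $B_x$ is simultaneously the Busemann function for the past ray, so $\|v_h(x)\| = \lim_n \frac{1}{n}B_x(\alpha_x(-n),\alpha_x(0)) = -\lim_n \frac{1}{n}B_x(\alpha_x(0),\alpha_x(-n)) = -\|v_h^-(x)\|$, and since both sides are non-negative, both vectors vanish.

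The main obstacle I anticipate is justifying the telescoping identity $g(f^k(x)) = B_x(\alpha_x(k),\alpha_x(k+1))$ carefully in this setting: unlike the homeomorphism case, there is no literal isometry $F$ commuting with covering transformations, so the statement that "$g$ may be computed at any lift and depends only on the asymptotic class of the ray" must be extracted from the invariance of the construction under deck transformations together with the $C^1$/compact-support control on $h$ and the flow. Once that identity and the integer-to-real-time comparison (the $d(\alpha_x(t),\alpha_x(n)) \le C$ bound, valid on $\operatorname{supp}(\mu)$) are in place, the remainder is a routine repetition of the Busemann-function and ergodic-averaging bookkeeping from Theorem \ref{past}; the ergodicity hypothesis is used exactly as there, to identify the Birkhoff limit with the constant $\int \|v_h\|\,\mathrm{d}\mu$ and to run the argument both forward and backward in time.
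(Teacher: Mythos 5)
Your proof is correct and follows essentially the same route as the paper's: you transport the Busemann-function/Birkhoff argument of Theorem \ref{past} to the bundle setting, defining $g(x)=B_x(\alpha_x(0),\alpha_x(1))$, establishing the telescoping identity $g(f^kx)=B_x(\alpha_x(k),\alpha_x(k+1))$ via Lemma \ref{asymptoticbusemannfunctions} and equivariance under deck transformations, and then comparing forward and backward Birkhoff averages using ergodicity. The only cosmetic divergence is that the paper's proof first normalizes to $\|v_h\|=1$ via a linear time change and works with unit-vector Busemann functions $B_{v(x)}$, whereas you keep the unnormalized geodesic ray; this makes no substantive difference.
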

\begin{proof}
Let $\pi: \widetilde{M} \to M$ be the universal Riemannian covering of $M$.

Notice that, because $\mu$ is ergodic, there exist constants $R^+,R^-$ such that $\|v_h(x)\| = R^+$ and $\|v_h^-(x)\| = R^-$ for almost every $x \in N$.  If $R^+ = R^- = 0$ then there is nothing to prove.  Otherwise by a linear time change (of the form $t \mapsto \pm Ct$) we may assume that $R^+ = 1$.

For each $x \in N$ with $\|v_h(x)\| = 1$ let $\alpha_x: \mathbb{R} \to \widetilde{M}$ be a lift of the curve $t \mapsto h(f^t x)$.  Also, let $v(x) \in T_{\alpha_x(0)}\widetilde{M}$ project to $v_h(x)$ and define $g:N \to \mathbb{R}$ as follows (see Appendix for definition of $B_v$):
\[g(x) = B_{v(x)}(\alpha_x(0),\alpha_x(1))\]

By noting that $B_{D\rho v}(\rho(p),\rho(q)) = B_v(p,q)$ for any $p,q \in \widetilde{M}$, $v \in S(\widetilde{M})$ and any isometry $\rho: \widetilde{M} \to \widetilde{M}$ it follows that $g$ doesn't depend on the chosen lift $\alpha_x$ and in particular is measurable (by choosing $x \mapsto \alpha_x(0)$ measurable).  This also gives the following equalities:
\[\sum_{k = 0}^{n-1}g(f^kx) = \sum_{k = 0}^{n-1}B_{v(x)}(\alpha_x(k),\alpha_x(k+1)) = B_{v(x)}(\alpha_x(0),\alpha_x(n))\]
\[\sum_{k = 1}^{n}g(f^{-k}x) = \sum_{k = 1}^{n}B_{v(x)}(\alpha_x(-k),\alpha_x(-k+1)) = B_{v(x)}(\alpha_x(-n),\alpha_x(0))\]

Because $v(x)$ projects to $v_h(x)$ which has norm $1$ it follows that $B_{v(x)}(\alpha_x(0),\exp(nv(x))) = n$.  Also,
\[B_{v(x)}(\alpha_x(0),\alpha_x(n)) = B_{v(x)}(\alpha_x(0),\exp(nv(x)))) + B_{v(x)}(\exp(nv(x)),\alpha_x(n))\]

From this and the fact that 
\[|B_{v(x)}(\exp(nv(x)),\alpha_x(n))| \le d(\exp(nv(x),\alpha_x(n)) = o(n)\]
one obtains that
\begin{align}\label{equationnormofv}
\lim_{n \to +\infty}\frac{1}{n}B_{v(x)}(\alpha_x(0),\alpha_x(n)) = \lim_{n \to +\infty}\frac{1}{n}\sum_{k = 0}^{n-1}g(f^k x) = 1 = \|v_h(x)\|
\end{align}
for all $x \in N$ with $\|v_h(x)\| = 1$. 

Furthermore one has $|g(x)| \le d(\alpha_x(0),\alpha_x(1))$ which is bounded on the support of $\mu$, hence by Birkhoff's ergodic theorem and the ergodicity of $\mu$:
\begin{align}\label{equationbirkhoffpast}
1 = \lim_{n \to +\infty}\frac{1}{n}\sum_{k = 1}^ng(f^{-k}(x)) = \lim_{n \to +\infty}\frac{1}{n}B_{v(x)}(\alpha_x(-n),\alpha_x(0))
\end{align}
for almost every $x \in N$.

Since $|B_{v(x)}(\alpha_x(-n),\alpha_x(0))| \le d(\alpha_x(-n),\alpha_x(0))$ for all $n$ this implies that $\|v_h^-(x)\| \ge 1 = \|v_h(x)\|$ for almost every $x \in N$.  However by applying the argument to a time reversing reparametrization of $f$ it follows that $\|v_h(x)\| = \|v_h^-(x)\| = 1$ for almost every $x \in N$.

To show that $v_h(x) \neq v_h^-(x)$ for almost every $x \in N$, we proceed by contradiction.  If $v_h(x) = v_h^-(x)$ with positive probability then for some $x \in N$ we would have:
\begin{align*}
1 &= \|v_h^-(x)\| = \lim_{n \to +\infty}\frac{1}{n}B_{v(x)}(\alpha_x(0),\alpha_x(-n))
\\& = -\lim_{n \to +\infty}\frac{1}{n}B_{v(x)}(\alpha_x(-n),\alpha_x(0)) = -1
\end{align*}
where the second equality is given by equation \ref{equationnormofv} applied to the time-reversal of $f$ and last equality follows from equation \ref{equationbirkhoffpast}.
\end{proof}

In order to state our semi-conjugacy result let us fix the following definitions.

\begin{definition}[Cocyle]
Let $f: \mathbb{R}\times N \to N$ be a measurable flow on a measurable space $(N,\mathcal{B}_N)$.  A cocycle over $f$ is a measurable function $a: N \times \mathbb{R} \to \mathbb{R}$ such that:
\[a(x,s+t) = a(x,s) + a(f^sx,t)\text{ for all }s,t \in \mathbb{R},x \in N\]
The cocycle $a$ is said to be invertible if $a(x,\cdot): \mathbb{R} \to \mathbb{R}$ is an increasing homeomorphism for all $x$.
\end{definition}

\begin{definition}[Measurable semi-conjugacy]
A measure preserving flow $f:\mathbb{R}\times N \to N$ on a probability space $(N,\mathcal{B}_N,\mu)$ is said to be semi-conjugate to a measurable flow $g: \mathbb{R}\times M \to M$ on a measurable space $(M,\mathcal{B}_M)$ if there exists an invertible cocycle $a:N \times \mathbb{R} \to \mathbb{R}$ and a measurable function $\phi:N \to M$ such that:
\[\phi(f^t x) = g^{a(x,t)}\phi(x)\text{ for all }t \in \mathbb{R},x \in N\]
\end{definition}

The last definition is quite lax, e.g. all flows are semi-conjugate to the constant flow on the space containing a single point.  However, we will prove a semi-conjugacy result with geometric content since the conjugation will associate to each point a geodesic escort of the image of its orbit through a given map.

\begin{theorem}[Semi-conjugacy]\label{semiconjugacytheorem}
With the hypothesis and notation of Theorem \ref{bundlepast} suppose that $M$ is a strong visibility manifold and that $\|v_h(x)\| = R > 0$ for almost every $x \in N$.

Then there exists an invariant Borel set $G \subset N$ with $\mu(G) = 1$ and such that $f$ restricted to $G$ is measurably semi-conjugate to the geodesic flow $g:\mathbb{R}\times TM \to TM$ of $M$.  Furthermore, the semi-conjugation $\phi: G \to TM$ can be chosen so that for all $x \in G$ there exist $\tilde{\alpha},\tilde{\beta}:\mathbb{R} \to \widetilde{M}$ which are lifts of the curves $t \mapsto \exp(t\phi(x))$ and $t \mapsto h(f^t x)$ respectively, and have the following property:
\[d(\tilde{\alpha(t)},\tilde{\beta}(t)) = o(t) \text{ when }t \to \pm\infty\]
\end{theorem}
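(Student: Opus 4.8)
The plan is to promote the one‑sided geodesic escort furnished by Theorem~\ref{bundleexistence} to a bi‑infinite one, in the spirit of Corollary~\ref{biinfinite}, and to read $\phi$ off the resulting bi‑infinite geodesic line. I would begin by fixing the full‑measure invariant set $G_0\subset N$ on which $v_h(x)$ and $v_h^-(x)$ both exist, have norm $R$, and satisfy $v_h(x)\neq v_h^-(x)$; this is exactly Theorems~\ref{bundleexistence} and~\ref{bundlepast} together with the standing hypothesis $\|v_h(x)\|=R>0$. After rescaling time I may assume $R=1$. For $x\in G_0$ choose, measurably, a lift $\tilde\beta_x:\mathbb{R}\to\widetilde M$ of $t\mapsto h(f^t x)$; the forward rotation vector yields a geodesic ray $\tilde\alpha^+_x$ from $\tilde\beta_x(0)$ with $d(\tilde\alpha^+_x(t),\tilde\beta_x(t))=o(t)$ as $t\to+\infty$, and the past rotation vector a ray $\tilde\alpha^-_x$ from the same point with $d(\tilde\alpha^-_x(t),\tilde\beta_x(-t))=o(t)$ as $t\to+\infty$. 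Since $v_h(x)\neq v_h^-(x)$ these rays issue in distinct directions, so by uniqueness of asymptotic rays in a Hadamard manifold their ideal endpoints $\xi^\pm(x)\in\widetilde M(\infty)$ are distinct, and strong visibility of $\widetilde M$ provides a geodesic line $\ell_x$, unique up to time translation, with $\ell_x(\mp\infty)=\xi^\mp(x)$. Asymptotic rays in a Hadamard manifold stay at bounded distance (a consequence of Lemma~\ref{convexity}), so for any parametrisation one gets $d(\ell_x(t),\tilde\beta_x(t))=o(|t|)$ as $t\to\pm\infty$.

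I would then define $\phi$. Projecting $\ell_x$ to $M$ and marking a point on it gives a vector of norm $1$ in $TM$, and for any such mark the escort clause of the statement holds with $\tilde\alpha$ the lift of $t\mapsto\exp(t\phi(x))$ and $\tilde\beta=\tilde\beta_x$, since moving the mark perturbs the phase only by $O(1)$. The structural point is that, when $x$ is replaced by $f^sx$, the curve $t\mapsto h(f^tx)$ is merely time‑translated, so the endpoints $\xi^\pm$ and hence the oriented geodesic $\pi(\ell_x)$ of $M$ are unchanged; consequently $\phi(f^sx)$ and $\phi(x)$ lie on a common geodesic of $M$, and necessarily $\phi(f^sx)=g^{a(x,s)}\phi(x)$, where $a(x,s)$ is the signed arclength from the mark of $x$ to the mark of $f^sx$ along that geodesic. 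The cocycle identity $a(x,s+t)=a(x,s)+a(f^sx,t)$ is then immediate. Choosing the provisional mark of $x$ to be the point of $\ell_x$ nearest $\tilde\beta_x(0)$ makes $\phi$ measurable and $s\mapsto a(x,s)$ continuous with $a(x,s)\to\pm\infty$ as $s\to\pm\infty$ (from the escort estimate and the fact that nearest‑point projection onto a convex subset of a Hadamard manifold is $1$‑Lipschitz). Replacing the mark replaces $\phi$ by $x\mapsto g^{u(x)}\phi(x)$ for some measurable $u$, which changes $a$ only by the coboundary $u\circ f^s-u$; so the cohomology class of $a$ is the one quantity I cannot alter.

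The main obstacle, as I see it, is to arrange that $a(x,\cdot)$ be an \emph{increasing homeomorphism} of $\mathbb{R}$ rather than merely a continuous surjection, for the provisional $a$ need not be monotone at all (the lifted curve $\tilde\beta_x$ may oscillate along $\ell_x$). To make $a$ non‑decreasing I would apply the standard straightening: with $u(x)=\inf_{s\ge 0}a(x,s)$ (finite, since $a(x,\cdot)$ is continuous, vanishes at $0$, and tends to $+\infty$), the cohomologous cocycle $a'(x,s)=\inf_{r\ge s}a(x,r)-\inf_{r\ge 0}a(x,r)$ is continuous, non‑decreasing, and onto $\mathbb{R}$, while $x\mapsto g^{u(x)}\phi(x)$ remains measurable and still carries the bi‑infinite escort. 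Upgrading ``non‑decreasing'' to ``strictly increasing'', so that $a'$ is a genuine homeomorphism in $t$, is the delicate step and the place where I expect the real work to lie: I would look for a further cohomological correction of $a'$ (or a more careful choice of the marking exploiting that a.e.\ orbit genuinely advances along $\ell_x$) that eliminates its flat intervals without leaving the forced cohomology class. Once a full‑measure invariant $G\subset G_0$, the corrected $\phi$, and $a=a'$ are in place, the two assertions of the theorem — the intertwining $\phi(f^tx)=g^{a(x,t)}\phi(x)$ and the bound $d(\tilde\alpha(t),\tilde\beta(t))=o(t)$ as $t\to\pm\infty$ for the indicated lifts — follow directly from the construction.
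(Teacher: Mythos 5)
Your geometric outline matches the paper's: reduce to $R=1$, take the full‑measure invariant set on which $v_h$ and $v_h^-$ exist, have norm $1$, and differ, lift the orbit curve, and use the two rays furnished by the rotation vectors together with strong visibility to produce the unique bi‑infinite geodesic escort. The paper then marks this geodesic using the projection along horospheres $\pi_h$ of Lemma~\ref{projectionalonghorospheres} (i.e.\ the point on the line lying on the same horosphere as $\tilde\beta_x(0)$ with respect to the Busemann function of the forward endpoint), which is a slightly different but equally serviceable canonical marking to your nearest‑point projection; either choice yields measurability and the cocycle relation $\phi_1(f^tx)=g^{b(x,t)}\phi_1(x)$ with $b$ continuous in $t$.

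The genuine gap in your proposal is exactly where you flag it: converting $b$ into an \emph{invertible} cocycle. Your straightening $a'(x,s)=\inf_{r\ge s}a(x,r)-\inf_{r\ge 0}a(x,r)$ does give a non‑decreasing cocycle, but you correctly observe that nothing forces it to be strictly increasing, and neither ``the orbit genuinely advances along $\ell_x$'' nor $b(x,t)/t\to 1$ rules out flat intervals surviving the straightening. The paper sidesteps this entirely by establishing two quantitative properties of $b$ — the asymptotics $b(x,t)/t\to 1$ for all $x\in G$ (from the bi‑infinite escort), and the uniform Lipschitz bound $|b(x,t)|\le C|t|$ with $C=\max_{\operatorname{supp}(\mu)}\|Dh\,X\|$ — and then invoking Boyland's Lemma~1.4 of \cite{MR1802657}, which precisely asserts that under these two conditions there is a measurable $r:G\to(0,+\infty)$ such that $a(x,t)=b(x,t)+r(f^tx)-r(x)$ is an increasing homeomorphism of $\mathbb{R}$ for every $x$ and still satisfies $a(x,t)/t\to 1$. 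Setting $\phi(x)=g^{r(x)}\phi_1(x)$ then finishes the argument. So the missing ingredient is not geometric but this cocycle‑rectification lemma: without it, your coboundary modification is not guaranteed to remove flat intervals, and the ``further cohomological correction'' you hope for is exactly the content of Boyland's lemma.
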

\begin{proof}
By linear time change of the flow $f$ we may suppose $R = 1$

Let $G \subset N$ be the set of points possessing distinct rotation vectors with norm $1$ to the past and the future.  This set is invariant, and has full measure by Theorems \ref{bundleexistence} and \ref{bundlepast}.

We first note that the functions $v_h, v_h^-: G \to TM$ are measurable.  In fact if one defines the curve $t \mapsto v_h^t(x)$ to be the lift of $t \mapsto h(f^t x)$ to $T_{h(x)}M$ via the exponential map, then $v_h^t$ is a continuous function of $x \in G$ and for all $x \in G$ one has:
\[v_h(x) = \lim_{t \to +\infty}\frac{1}{t}v_h^t(x)\]
\[v_h^-(x) = \lim_{t \to +\infty}\frac{1}{t}v_h^{-t}(x)\]

Now fix $x \in G$, consider any lift $y \in \widetilde{M}$ of $h(x)$ and define $\tilde{\beta}: \mathbb{R} \to \widetilde{M}$ as the lift of $t \mapsto h(f^t x)$ starting at $y$.  

Let $v^+,v^- \in T_y\widetilde{M}$ project to $v_h(x)$ and $v_h^-(x)$ respectively and let $v = \pi_h(v^+,v^-)$ be the projection along horospheres of $v^+,v^-$ (see Lemma \ref{projectionalonghorospheres}).    Defining $\tilde{\alpha}(t) = \exp(tv)$ it holds that:
\[d(\tilde{\alpha}(t),\tilde{\beta}(t)) = o(t) \text{ when } t \to \pm \infty\]

Also, if $\gamma$ is a covering transformation then one can see that $D\gamma v^+$ remains asymptotic (see Appendix) to $D\gamma v$ and similarly for $D\gamma v^-$ and $-D\gamma v$, also the base points remain on the same horosphere with respect to $B_{D\gamma v}$ and therefore it must be that $D\gamma v = \pi_h(D\gamma v^+,D\gamma v^-)$.  This shows that the projection of $v$ to $TM$ doesn't depend on the chosen lift $y$.  Hence the equation:
\[\phi_1(x) = D\pi(v)\]
defines a measurable function $\phi_1: G \to TM$.

For any $t \in \mathbb{R}$ let $v^+(t)$ and $v^-(t)$ denote the lifts of $v_h(f^t x)$ and $v_h^-(f^t x)$ to $T_{\tilde{\beta}(t)}\widetilde{M}$.  Since $v^+(t)$ is asymptotic to $v^+$ for all $t$, Lemma \ref{asymptoticquotienttopology} implies that $v^+(t)$ is a continuous function of $t$.  Similarly $v^-(t)$ is a continuous function of $t$.  This implies that there exists a unique $b(x,t) \in \mathbb{R}$, which is continuous with respect to $t$, such that:
\[\phi_1(f^t x) = g^{b(x,t)}\phi_1(x)\]

From the above equation the function $b: G \times \mathbb{R} \to \mathbb{R}$ is a measurable cocycle.  However the cocycle $b$ is not necessarily invertible.

To construct an invertible cocycle we notice that since $\tilde{\alpha}$ and $\tilde{\beta}$ are asymptotic one has:
\[\lim_{t \to +\infty}\frac{b(x,t)}{t} = 1\text{ for all }x \in G\]

And also if $C = \max_{supp(\mu)} \|Dh X\|$ one has:  
\[|b(x,t)| \le \int_0^t \|Dh X(f^s x)\|\mathrm{d}s \le C t\]
so that $b$ is uniformly Lipshitz with respect to $t$.

Under these conditions \cite[Lemma 1.4]{MR1802657} guarantees the existence of a measurable function $r: G \to (0,+\infty)$ such that the equation
\[a(x,t) = b(x,t) + r(f^t x) - r(x)\]
defines an invertible cocycle which satisfies the property:
\[\lim_{t \to +\infty}\frac{a(x,t)}{t} = 1\text{ for all }x \in G\]

Hence by defining $\phi(x) = g^{r(x)}\phi_1(x)$ the theorem follows.
\end{proof}

\section{Examples}
\label{sectionexamples}
\subsection{Homeomorphisms of hyperbolic surfaces}

Besides the two-dimensional torus the simplest manifolds to which our results are applicable are complete hyperbolic surfaces.  Self-homeomorphisms of such manifolds which are isotopic to the identity form a large family of dynamical systems which have been widely studied with other tools.  We will now discuss some implications of the existence of rotation vectors in this context and relate them to known results.

Consider the disk $\mathbb{D} = \{z \in \mathbb{C}: |z| < 1\}$ endowed with the Poincare metric.  We recall that geodesics of this metric are parametrizations of either Euclidean diameters of $\mathbb{D}$ or Euclidean circle arcs which are perpendicular to the boundary $\partial \mathbb{D}$.  In particular, we note that any geodesic ray has a well defined endpoint in $\partial\mathbb{D}$.  The only additional fact we will need about this metric is that the closed disk centered at a point $x \in \mathbb{D}$ with hyperbolic radius $R \ge 0$ (which we will denote by $D(x,R)$) is in fact an Euclidean closed disk centered at a point on the Euclidean segment $[0,x]$ (see the proof of \cite[Proposition 4.5]{MR2161463}).

Throughout this section let $M$ be a complete hyperbolic surface and $f: M \to M$ a homeomorphism which is isotopic to the identity.  Fix a Riemannian covering $\pi: \mathbb{D} \to M$ and let $F: \mathbb{D} \to \mathbb{D}$ be an admissible lift of $f$.  The following proposition summarizes the consequences of the existence of points with non-null rotation vectors.

\begin{proposition}
\label{hyperbolicsurface}
Let $x \in \mathbb{D}$ project to a point in $M$ possessing rotation vectors to the future and past which are non-null, distinct, and have equal norm.  Then there exist two distinct boundary points $x_-,x_+ \in \partial\mathbb{D}$ such that:
\[\lim_{n \to +\infty}F^n(x) = x_+\]
\[\lim_{n \to +\infty}F^{-n}(x) = x_-\]
\end{proposition}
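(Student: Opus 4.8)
The plan is to use the previously established structure: the point $x$ projects to a point with a rotation vector $v_F$ and a past rotation vector $v_{F^{-1}}$, both nonzero, distinct, and of equal norm $R > 0$. By Lemma \ref{vectorsvrsescorts} and the definition of rotation vectors, there is a geodesic ray $\tilde{\alpha}^+:[0,+\infty)\to\mathbb{D}$ starting at $x$ with $d(\tilde{\alpha}^+(n),F^n x)=o(n)$, and symmetrically a geodesic ray $\tilde{\alpha}^-$ with $d(\tilde{\alpha}^-(n),F^{-n}x)=o(n)$. In the Poincar\'e disk every geodesic ray has a well-defined endpoint in $\partial\mathbb{D}$; let $x_+$ and $x_-$ be the endpoints of $\tilde{\alpha}^+$ and $\tilde{\alpha}^-$ respectively. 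Since $v_F(x)\neq v_{F^{-1}}(x)$ and they have the same (nonzero) norm, the two rays issue from $x$ in genuinely different directions, and two distinct geodesic rays from the same point have distinct endpoints on $\partial\mathbb{D}$ (their distance grows linearly by Lemma \ref{convexity}), so $x_+\neq x_-$.

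The main work is to upgrade ``$d(\tilde{\alpha}^+(n),F^n x)=o(n)$ along integers'' to ``$F^n(x)\to x_+$''. First I would reduce to showing that $d(x, F^n(x))\to +\infty$ and that $F^n(x)$ stays, in an appropriate asymptotic sense, close to the ray $\tilde\alpha^+$; then the Euclidean-geometry fact recalled in the section — that the hyperbolic closed ball $D(y,\rho)$ is a Euclidean disk centered on the Euclidean segment $[0,y]$ — lets me conclude Euclidean convergence to the endpoint. Concretely: because the rate of escape is $R>0$ (Lemma \ref{vectorsvrsescorts}), we have $d(x,F^n x)=Rn+o(n)\to\infty$, so $F^n(x)$ exits every compact subset of $\mathbb{D}$ and hence Euclidean-accumulates only on $\partial\mathbb{D}$. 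To pin the accumulation to the single point $x_+$, note $F^n(x)$ lies in the hyperbolic ball $D(\tilde\alpha^+(d(x,F^n x)), \, d(\tilde\alpha^+(d(x,F^n x)), F^n x))$ whose radius is $o(n)$ while its center marches out along $\tilde\alpha^+$ toward $x_+$; since a hyperbolic ball of radius $o(n)$ centered at hyperbolic distance $\sim Rn$ along a ray has Euclidean diameter tending to $0$ and Euclidean center tending to the ray's endpoint, every accumulation point of $\{F^n x\}$ must equal $x_+$. The same argument applied to $F^{-1}$ and $\tilde\alpha^-$ gives $F^{-n}(x)\to x_-$.

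The step I expect to be the genuine obstacle is the quantitative claim that a hyperbolic ball of radius $r_n = o(n)$ whose center $c_n$ lies on $\tilde\alpha^+$ at parameter $t_n\sim Rn\to\infty$ has Euclidean diameter $\to 0$ and Euclidean center $\to x_+$. This is where one must use the explicit hyperbolic metric: the Euclidean radius of $D(c_n,r_n)$ is controlled by $\tfrac12(1-|c_n|^2)(e^{r_n}-1)/(\text{something like } 1 + |c_n|(\cdots))$, and one needs $(1-|c_n|^2)e^{r_n}\to 0$. Since $1-|c_n|^2$ decays like $e^{-2t_n} = e^{-2Rn+o(n)}$ while $e^{r_n}=e^{o(n)}$, the product is $e^{-2Rn+o(n)}\to 0$, which closes the estimate; I would either cite or quickly derive this from the formula for hyperbolic distance $d(0,z)=\log\frac{1+|z|}{1-|z|}$ together with the invariance of the metric under M\"obius transformations fixing the origin's image. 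Once this estimate is in hand, combining it with the convergence $c_n\to x_+$ (immediate since $c_n$ runs along a ray with endpoint $x_+$) and the inclusion $F^n(x)\in D(c_n,r_n)$ finishes the proof; the distinctness $x_+\neq x_-$ is then the easy part, handled as above via convexity of the distance between the two rays.
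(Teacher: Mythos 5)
Your proposal is correct and follows the same overall scaffolding as the paper: identify the boundary endpoints $x_\pm$ of the two geodesic rays determined by the rotation vectors, trap $F^n(x)$ in a hyperbolic ball centered on the forward ray, and convert to Euclidean convergence using properties of hyperbolic disks in $\mathbb{D}$. The difference is in the choice of trapping ball. You use the small ball $D(c_n,r_n)$ with $c_n = \tilde\alpha^+(d(x,F^nx))$ and $r_n = o(n)$, which forces you to prove quantitatively that a hyperbolic ball of radius $o(n)$ centered at hyperbolic distance $\sim Rn$ from the origin has Euclidean diameter tending to $0$. The paper instead takes the ball $D(x_n, R_n)$ with $x_n = \exp(nv_+)$ and $R_n = d(x_n,0)$, so that $0$ lies on its boundary; this large ball automatically contains $F^n(x)$ for large $n$ since $d(x_n,F^nx)=o(n) < R_n$, and the recalled Euclidean fact (a hyperbolic closed ball is a Euclidean disk centered on the radial segment $[0,x_n]$) immediately places $D(x_n,R_n)$ inside the fixed Euclidean disk with diameter $[0,x_n/|x_n|]$ — no metric computation needed. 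Your quantitative estimate is correct in substance, though you have a normalization slip: with the distance formula $d(0,z)=\log\frac{1+|z|}{1-|z|}$ quoted in the paper, $1-|c_n|^2$ decays like $e^{-t_n}$, not $e^{-2t_n}$; this does not affect the conclusion since $r_n = o(t_n)$ either way, but you should fix the exponent if you write the argument out. Both routes close the proof; the paper's choice of radius is a bit slicker because it sidesteps the Euclidean-radius estimate altogether.
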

\begin{proof}
Let $v_+,v_- \in T_x\mathbb{D}$ project to the rotation vectors to the past and future of $\pi(x)$ respectively,  and let $x_+,x_- \in \partial \mathbb{D}$ be the boundary endpoints of the geodesic rays with initial condition $v_+$ and $v_-$ respectively. One has that:
\[d(\exp(nv_+),F^n(x)) = o(n)\text{ when }n \to +\infty\]
\[d(\exp(nv_-),F^{-n}(x)) = o(n)\text{ when }n \to +\infty\]

In particular it follows that $d(0,F^n(x)) = \|v_+\|n + o(n)$ so that any limit point of the sequence $\{F^n(x)\}_{n \ge 0}$ will belong to the boundary $\partial \mathbb{D}$.  Define $x_n = \exp(nv_+)$ and notice that $R_n = d(x_n,0) = n\|v_+\| + O(1)$.  If follows that $F^n(x) \in D(x_n,R_n) = D_n$ for all but finitely many $n \ge 0$.  Since $D_n$ is an Euclidean disk centered on a point in the segment $[0,x_n]$ and $0$ belongs to the boundary of $D_n$ it follows that $D_n$ is contained in the closed Euclidean disk with diameter $[0,x_n/|x_n|]$.  Since $\lim_{n \to +\infty}x_n/|x_n| = x_+$ one obtains that $\lim_{n \to +\infty}F^n(x) = x_+$ as claimed.  

The same argument establishes the claim for for $\{F^{-n}(x)\}_{n \ge 0}$ and $x_-$.
\end{proof}

The conclusion of the above proposition was shown by Michael Handel to hold in some cases in which $x$ is the lift of a periodic orbit of $f$ as discussed in \cite[Section 3]{MR1325916} (compare with Theorems \ref{periodicescaperate} and \ref{past}).  The existence of points in the lift whose orbits have distinct limits on the boundary circle when $n \to \pm\infty$ can in certain circumstances imply the existence of fixed points of $f$ with index $1$ by application of Handel's fixed point theorem as in \cite[Proposition 3.4]{MR1325916} (see also \cite{MR1660349}).

In particular from the observations above it follows that if a point $x \in \mathbb{D}$ projects to a point with non-null and distinct rotation vectors to the past and future then its orbit cannot remain close to a horocycle since this would imply $x_+ = x_-$ contradicting Proposition \ref{hyperbolicsurface} (compare with the example at the beginning of section \ref{periodic}).

\subsection{Hyperbolic Magnetic Flow}

The differential equation 
\[\alpha''(t) = i\alpha'(t)\text{ for all }t\]
where $\alpha:\mathbb{R} \to \mathbb{R}^2$ and multiplication by $i$ denotes rotation by $90$ degrees in the counter-clockwise direction, determines the trajectory of a charged particle in a plane orthogonal to a constant magnetic field in Euclidean space.

It is easy to show that any such curve is periodic and in fact is a parametrization of an Euclidean circle of radius $\|\alpha'(0)\|$.

We will now consider a similar dynamical system in the hyperbolic plane, that is, we consider the differential equation:
\begin{equation}
\label{magneticequation}
\frac{D}{\mathrm{d}t}\alpha'(t) = i \alpha'(t)
\end{equation}
where $D/\mathrm{d}t$ denotes covariant derivation with respect to the canonical connection of the hyperbolic metric.  This differential equation defines a flow on the tangent bundle of the hyperbolic plane and also on the tangent bundle of any oriented hyperbolic surface (this is the so called `magnetic flow' for the volume $2$-form, e.g. see \cite{MR2343689}).

Fixing a non-constant trajectory and denoting the norm of its velocity (which remains constant) by $v$ we will prove the following (see figure \ref{magneticfigure}
):
\begin{itemize}
\item If $v < 1$ then the trajectory is periodic and in fact is a hyperbolic circle of radius $r$ where $\tanh(r) = v$.
\item If $v = 1$ then the trajectory is a horosphere.
\item If $v > 1$ then the trajectory has rate of escape $R = \sqrt{v^2 - 1}$ and describes a curve which is at a constant distance $r > 0$ from a geodesic.  The distance $r$ is given by the equation $\tanh(r) = 1/v$.
\end{itemize}

\begin{figure}
\begin{center}
\label{magneticfigure}
\includegraphics[height=40mm,width=40mm]{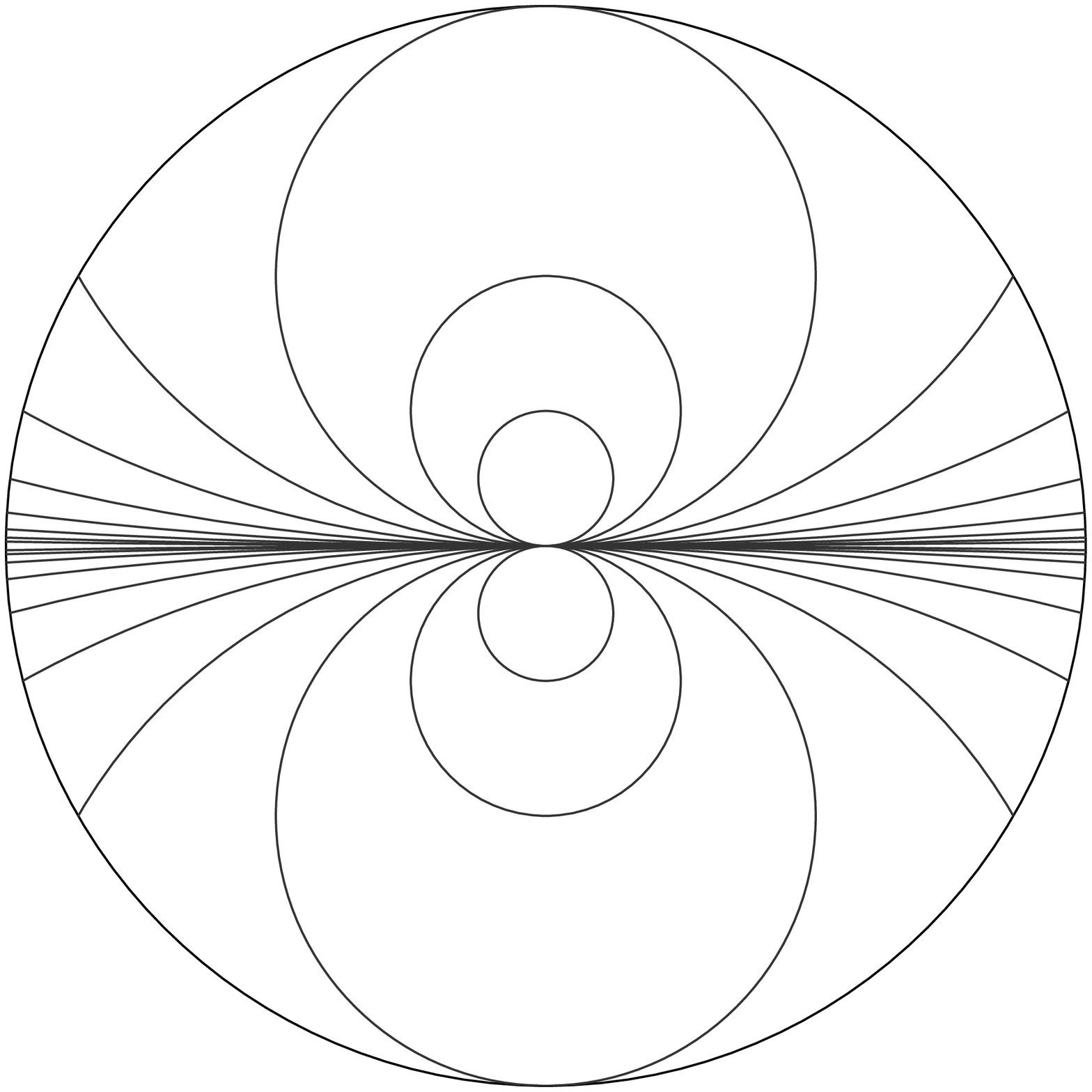}
\end{center}
\begin{caption}{Trajectories parting from $0$ in the Poincar\'e disk model of the hyperbolic plane are Euclidean circle arcs and are traversed in the counter-clockwise sense.}
\end{caption}
\end{figure}

In particular, if $M$ is a compact oriented hyperbolic surface and $f:\mathbb{R}\times TM \to TM$ is the magnetic flow described above then any ergodic invariant measure for $f$ which is supported on a set of vectors with constant norm $v > 1$ has non-null rotation vectors and satisfies the hypothesis of Theorem \ref{semiconjugacytheorem}.

Since the set of solutions to equation \ref{magneticequation} is invariant by isometries and the description above accounts for all possible values of the initial speed $v$ all one needs to do to establish the claims is calculate that the described curves are in fact solutions to the equation.  This verification amounts to calculating the covariant derivative of an explicit curve and is simple to do if an appropriate coordinate system (which we will now indicate) is used for each case.

For the case $v < 1$ the claim follows from \cite{MR1917223} but one can also verify it by calculating in polar coordinates.  Concretely consider the plane $P = \{(r,\theta) \in \mathbb{R}^2: r > 0\}$ with the metric $\mathrm{d}r^2 + \sinh(r)^2\mathrm{d}\theta^2$.  Let $\Gamma$ be the group generated by the isometry $(r,\theta) \mapsto (r,2\pi + \theta)$.  The Riemannian quotient manifold $P /\Gamma$ is isometric to the hyperbolic plane minus one point (this follows from the fact that the perimeter of the hyperbolic circle of radius $r$ is $2\pi \sinh(r)$).  Our claim amounts to the statement that for each $r > 0$ one has:
\[\frac{D}{\mathrm{d}t}\alpha'(t) = (-\tanh(r),0)\]
where
\[\alpha(t) = (r,t/\cosh(r))\]

For the case $v = 1$ the verification may be done in the upper half-plane model (i.e. $\mathbb{H} = \{x = (x,y) \in \mathbb{R}^2: y > 0\}$ with the metric $(\mathrm{d}x^2 + \mathrm{d}y^2)/y^2$).  It amounts to verifying that:
\[\frac{D}{\mathrm{d}t}\alpha'(t) = (0,1)\]
where
\[\alpha(t) = (t,1)\]

Finally, consider the case $v > 1$.  We will use a system of coordinates $(x,r)$ described as follows: Fix a unit speed geodesic $\gamma$ in the hyperbolic plane, the point with coordinates $(x,r)$ is the unique point at distance $r$ from $\gamma(x)$ on the geodesic ray with initial condition $i\gamma'(x)$.  In these coordinates the hyperbolic metric is $\cosh(r)^2\mathrm{d}x^2 + \mathrm{d}r^2$ as can be seen by the isometry with the upper half-plane model given by $\phi(x,r) = (e^x\tanh(r), e^x/\cosh(r))$.  One must verify that for each $r > 0$ the following curve is a solution to equation \ref{magneticequation}:
\[\alpha(t) = (t/\sinh(r),r)\]

In this case calculation yields:
\[\frac{D}{\mathrm{d}t}\alpha'(t) = (0,-1/\tanh(r))\]

The norm of $\alpha'(t)$ is $1/\tanh(r) = v$.  Since $\alpha$ is at a constant distance from $t \mapsto (t/\sinh(r),0)$ which is a geodesic it follows that the rate of escape of either curve is $R = \frac{1}{\sinh(r)}$.  By application of the identity \begin{footnote}{This identity can be obtained starting with $\tanh(\arctanh(x)) = x$ by squaring both sides and using the fact that $\cosh(y)^2 = 1 + \sinh(y)^2$.}\end{footnote}:
\[\sinh(\arctanh(x)) = \frac{x}{\sqrt{1 - x^2}}\]
it follows that:
\[R = \frac{1}{\sinh(r)} = \frac{1}{\sinh(\arctanh(1/v))} = \sqrt{v^2 - 1}\]

\addcontentsline{toc}{section}{Acknowledgments}
\section*{Acknowledgments}

The author would like to thank Gonzalo Contreras, Fran\c{c}ois Ledrappier, Rafael Potrie, Mart\'in Sambarino, Juliana Xavier, the participants and organizers of the ``Seminario Atl\'antico de Geometr\'ia 2010'', and the two journal referee's and board member, for contributing significantly to improving the quality and clarity of this work.

\appendix
\addcontentsline{toc}{section}{Appendix: Visibility Manifolds}
\section*{Appendix: Visibility Manifolds}
The purpose of this appendix is to present some facts about Busemann functions and Visibility Manifolds.

\begin{definition}[Asymptotic tangent vectors]\label{asymptotictangentvectors}
Let $H$ be a Hadamard manifold.  Two vectors $v,w \in S(H)$ are said to be asymptotic if:
\[d(\exp(tv),\exp(tw)) = O(1)\text{ when }t \to +\infty\]
\end{definition}

We observe that if $v,w \in S(p)$ for some $p \in H$ then $v$ and $w$ are asymptotic if and only if $v = w$.  This follows because $t \mapsto d(\exp(tv),\exp(tw))$ is a non-constant convex function (lemma \ref{convexity}) taking the value $0$ at $t = 0$ and therefore grows at least linearly.  Also, by the same argument, the condition that the distance in the definition be bounded is equivalent to it being $o(t)$ when $t \to +\infty$.  The main non-trivial fact we shall use about this equivalence relationship is the following:

\begin{lemma}\label{asymptoticquotienttopology}
Let $H$ be a Hadamard manifold.  If $\{v_n\}_{n \ge 1}, \{w_n\}_{n \ge 1} \subset S(H)$ and $v,w \in S(H)$ are such that:
\[v_n \to v\text{ when } n \to +\infty\]
\[w_n \to w\text{ when } n \to +\infty\]
\[v_n\text{ is asymptotic to }w_n\text{ for all }n\]
then $v$ is asymptotic to $w$.
\end{lemma}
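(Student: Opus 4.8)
The plan is to use the characterization of asymptoticity in a Hadamard manifold via Busemann functions, together with the continuity of the Busemann function in its arguments and in the defining vector. First I would recall that two unit vectors $u_1,u_2\in S(H)$ are asymptotic precisely when the geodesic rays they generate stay a bounded distance apart, which by convexity (Lemma \ref{convexity}) is equivalent to $d(\exp(tu_1),\exp(tu_2)) = o(t)$; and this in turn is equivalent to the statement that $\exp(\mathbb{R}_{\ge 0}u_2)$ lies on a single level set of the Busemann function $B_{u_1}$, i.e. $B_{u_1}(\exp(tu_1),\exp(tu_2))$ is bounded in $t$. Thus the relation ``$v_n$ asymptotic to $w_n$'' can be encoded as: the limit $\lim_{t\to+\infty}\big(d(p,\exp(tv_n)) - d(\exp(tv_n),q)\big)$, taken with $p = \pi(v_n)$ the basepoint and $q = \pi(w_n)$, equals $-\,d(\pi(v_n),\pi(w_n))$ if $w_n$ points ``away along'' the ray, or more precisely that the ray of $w_n$ converges to the same point at infinity as that of $v_n$.

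Concretely, I would proceed as follows. For each $n$, asymptoticity of $v_n$ and $w_n$ means the rays $r\mapsto\exp(rv_n)$ and $r\mapsto\exp(rw_n)$ are at bounded Hausdorff distance; equivalently, choosing a large but fixed $T$, the point $\exp(Tw_n)$ lies within a bounded distance (depending on $n$) of the ray of $v_n$, but the \emph{direction} in which this happens is controlled: the initial vector $w_n$ is obtained from $v_n$ by the natural identification of directions at $\pi(w_n)$ with points of the boundary sphere at infinity. I would instead argue by a direct geometric estimate using the flat strip / convexity phenomena: fix $R>0$; for each $n$ let $c_n = \sup_t d(\exp(tv_n),\exp(tw_n)) < +\infty$. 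The sequence $c_n$ need not be bounded, so I cannot pass to the limit naively — and this is the main obstacle. The resolution is to use convexity of $t\mapsto d(\exp(tv_n),\exp(tw_n))$ together with its value $d(\pi(v_n),\pi(w_n))$ at $t=0$: a bounded convex function on $[0,+\infty)$ is non-increasing, so in fact $d(\exp(tv_n),\exp(tw_n)) \le d(\pi(v_n),\pi(w_n))$ for all $t\ge 0$. Since $\pi(v_n)\to\pi(v)$ and $\pi(w_n)\to\pi(w)$, the right-hand side is bounded, say by $2d(\pi(v),\pi(w)) + 1 =: C$ for $n$ large.

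Now the uniform bound $d(\exp(tv_n),\exp(tw_n)) \le C$ for all $t\ge 0$ and all large $n$ lets me take the limit: by continuity of the exponential map and of the distance function, for each fixed $t$ we get $d(\exp(tv),\exp(tw)) = \lim_n d(\exp(tv_n),\exp(tw_n)) \le C$. Hence $d(\exp(tv),\exp(tw)) = O(1)$ when $t\to+\infty$, which is exactly the statement that $v$ is asymptotic to $w$. The only point requiring a little care is justifying that a bounded convex function on $[0,+\infty)$ is non-increasing — this is elementary: if $g$ were to strictly increase on some subinterval, convexity would force $g$ to tend to $+\infty$, contradicting boundedness — and that the convexity of $t\mapsto d(\exp(tv_n),\exp(tw_n))$ is supplied verbatim by Lemma \ref{convexity} (applied to the two geodesics $t\mapsto\exp(tv_n)$ and $t\mapsto\exp(tw_n)$). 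I expect the whole argument to be short; the one genuine idea is replacing the a priori merely finite bounds $c_n$ by the uniform bound coming from convexity evaluated at $t=0$.
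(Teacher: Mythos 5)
Your proof is correct, and the core argument is essentially the one the paper uses: convexity of $t\mapsto d(\exp(tv_n),\exp(tw_n))$ (Lemma \ref{convexity}) plus boundedness (from asymptoticity of $v_n,w_n$) forces that function to be non-increasing, hence bounded by its value at $t=0$, and then one passes to the limit using continuity of $\exp$ and $d$. The paper phrases this contrapositively (assume the limit distance increases at some $t$, propagate by continuity to some $v_n,w_n$, and derive a contradiction with asymptoticity via convexity), whereas you run it directly; both proofs rest on the same two facts and the same uniform bound $d(\pi(v_n),\pi(w_n))$. One remark: the opening paragraph about Busemann functions is a detour you never actually use — your real proof starts at ``I would instead argue by a direct geometric estimate'' — so you could simply delete it.
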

\begin{proof}
Let $\pi: S(H) \to H$ denote the bundle projection and $g:\mathbb{R}\times S(H) \to S(H)$ the geodesic flow of $H$.

Suppose that there exists $t > 0$ such that:
\[d(\pi(g^tv),\pi(g^tw)) > d(\pi(v),\pi(w))\]

Then because $g$ is continuous there exists $n$ such that:
\[d(\pi(g^tv_n),\pi(g^tw_n)) > d(\pi(v_n),\pi(w_n))\]

However by convexity this implies that:
\[\lim_{s \to +\infty}d(\pi(g^sv_n),\pi(g^sw_n)) = +\infty\]

Which contradicts the fact that $v_n$ and $w_n$ are asymptotic.
\end{proof}

We recall the following definition of Busemann functions (compare with the definition of $f_\gamma$ in \cite[pg. 56]{MR0336648}).  See \cite{2010arXiv1010.6028F} for background on this concept.
\begin{definition}[Busemann function, horosphere]
Let $H$ be a Hadamard manifold and $v \in S(H)$.  The Busemann function $B_v: H\times H \to \mathbb{R}$ is defined by:
\[B_v(x,y) = \lim_{t \to +\infty}d(x,\exp(tv)) - d(\exp(tv),y)\]

The maximal sets of points $L \subset H$ with $B_v(x,y) = 0$ for all $x,y \in L$ are called horospheres or limit spheres of $H$.
\end{definition}

Directly from the definition one can deduce that both the inequality $|B_v(p,q)| \le d(p,q)$ and the equality $B_v(p,r) = B_v(p,q)+B_v(q,r)$ hold for all $p,q,r \in H$ and all $v \in S(H)$.  We shall also use two more facts.

\begin{lemma}\label{asymptoticbusemannfunctions}
Let $H$ be a Hadamard manifold.  If $v,w \in S(H)$ are asymptotic to each other then $B_v = B_w$.
\end{lemma}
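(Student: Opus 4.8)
The plan is to reduce the two-variable identity $B_v=B_w$ to the one-variable statement that the horofunctions
\[\beta_v(x)=\lim_{t\to+\infty}\bigl(d(x,\exp(tv))-t\bigr),\qquad \beta_w(x)=\lim_{t\to+\infty}\bigl(d(x,\exp(tw))-t\bigr)\]
differ by a global additive constant. First I would observe that each of these limits exists, because $t\mapsto d(x,\exp(tv))-t$ is non-increasing (triangle inequality) and bounded below, and that comparing definitions gives $B_v(x,y)=\beta_v(x)-\beta_v(y)$ for all $x,y$ (the parameter $t$ cancels), and likewise for $w$. Hence it suffices to prove that $\beta_v-\beta_w$ is constant on $H$.

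The geometric input is the following. Since $v$ and $w$ are asymptotic, $t\mapsto d(\exp(tv),\exp(tw))$ is bounded, and by Lemma \ref{convexity} it is convex, hence non-increasing; in particular the rays $\exp(\cdot v)$ and $\exp(\cdot w)$ have the same endpoint at infinity. Consequently, for every $x\in H$ the unique geodesic ray issuing from $x$ that is asymptotic to $v$ coincides with the one asymptotic to $w$; call it $\sigma_x$, with initial velocity $u(x)\in S(x)$. Along $\sigma_x$ both Busemann functions decrease at unit speed, $\beta_v(\sigma_x(s))=\beta_v(x)-s$ and $\beta_w(\sigma_x(s))=\beta_w(x)-s$, so $\beta_v-\beta_w$ is constant along each such ray. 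To upgrade this to global constancy I would invoke the standard regularity of Busemann functions on Hadamard manifolds (see \cite{MR0336648}): $\beta_v$ is $C^1$ with $\nabla\beta_v(x)=-u(x)$, and similarly $\nabla\beta_w(x)=-u(x)$ since the asymptote field $u$ depends only on the common endpoint at infinity; therefore $\nabla(\beta_v-\beta_w)\equiv 0$, and connectedness of $H$ forces $\beta_v-\beta_w$ to be constant. This yields $B_v(x,y)=\beta_v(x)-\beta_v(y)=\beta_w(x)-\beta_w(y)=B_w(x,y)$, as desired.

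The step I expect to be the main obstacle is precisely the passage from ``$\beta_v-\beta_w$ is constant along every asymptote'' to ``$\beta_v-\beta_w$ is globally constant'', i.e. controlling the behaviour transverse to the foliation by asymptotic rays. A purely convexity-based estimate yields only $|B_v(x,y)-B_w(x,y)|\le 2\lim_{t\to+\infty}d(\exp(tv),\exp(tw))$, and this limit need not vanish (parallel geodesics in a flat factor), so genuine Hadamard-geometry input is needed beyond Lemma \ref{convexity}; the cleanest such input is the gradient formula for Busemann functions quoted above, and an acceptable alternative would be to combine the leafwise statement with a flat-strip analysis on the locus where $d(\exp(tv),\exp(tw))$ fails to tend to $0$. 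In the write-up I would take the gradient route.
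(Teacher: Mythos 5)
Your argument and the paper's follow the same reduction: write $B_v(x,y) = \beta_v(x) - \beta_v(y)$ and show that $\beta_v - \beta_w$ is a global constant. The paper at that point simply cites \cite[Proposition 3.1]{MR0336648}, which is exactly the constancy statement you re-derive via the $C^1$ gradient formula $\nabla\beta_v = -u$ (also an Eberlein--O'Neill fact), so the two proofs rest on the same source and differ only in how much is left to the citation. One caution if you write this up: your intermediate claim $\beta_v(\sigma_x(s)) = \beta_v(x) - s$ is itself equivalent to the lemma applied to the pair $(v,u(x))$, so it is not an elementary stepping stone toward the result and presenting it as such risks appearing circular; it is cleaner to drop that step and argue directly that $\nabla\beta_v = \nabla\beta_w = -u$ forces $\beta_v - \beta_w$ to be constant on the connected manifold $H$.
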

\begin{proof}
Let $v \in S(p)$ and $w \in S(q)$. If one defines $f,g:\mathbb{H} \to \mathbb{R}$ by:
\[f(x) = B_v(p,x)\]
\[g(x) = B_w(q,x)\]

Then by \cite[Proposition 3.1]{MR0336648} one has that $f-g$ is constant.  Since:
\[B_v(x,y) - B_w(x,y) = f(y)-f(x) - (g(y) -g(x))\]
the conclusion follows.
\end{proof}

\begin{lemma}\label{convergenceofbusemannfunctions}
Let $H$ be a Hadamard manifold.  If $\{v_n\}_{n \ge 1} \subset S(H)$ and $v \in S(H)$ are such that:
\[v_n \to v\text{ when }n \to +\infty\]
then $B_{v_n} \to B_v$ uniformly on compact subsets of $H\times H$ when $n \to +\infty$.
\end{lemma}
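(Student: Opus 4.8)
The plan is to approximate each Busemann function uniformly on compact sets by continuous ``finite‑time'' versions and then bound the error uniformly. For $t\ge 0$, $w\in S(H)$ and $x,y\in H$ set $\phi^{t}_{w}(x,y)=d(x,\exp(tw))-d(y,\exp(tw))$; for fixed $t$ this is jointly continuous in $(w,x,y)$ because $w\mapsto\exp(tw)$ is continuous, and by definition $B_{w}(x,y)=\lim_{t\to+\infty}\phi^{t}_{w}(x,y)$. The whole statement then reduces to one uniform estimate on the rate of this limit: setting $\rho=d(\pi(w),x)$, I claim that $t\mapsto t-d(x,\exp(tw))$ is non‑decreasing and that for every $t>\rho$,
\[0\ \le\ B_{w}(\pi(w),x)-\bigl(t-d(x,\exp(tw))\bigr)\ \le\ \frac{\rho^{2}}{2(t-\rho)}.\]
Using the additivity $B_{w}(x,y)=B_{w}(\pi(w),y)-B_{w}(\pi(w),x)$ and the analogous identity $\phi^{t}_{w}(x,y)=\bigl(t-d(y,\exp(tw))\bigr)-\bigl(t-d(x,\exp(tw))\bigr)$, this gives $|B_{w}(x,y)-\phi^{t}_{w}(x,y)|\le\frac{\rho_{x}^{2}}{2(t-\rho_{x})}+\frac{\rho_{y}^{2}}{2(t-\rho_{y})}$ whenever $t>\max(\rho_{x},\rho_{y})$, with $\rho_{x}=d(\pi(w),x)$, $\rho_{y}=d(\pi(w),y)$.

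To prove the estimate, monotonicity of $t\mapsto t-d(x,\exp(tw))$ — hence the lower bound and the existence of the limit — follows from the triangle inequality along the geodesic ray $r\mapsto\exp(rw)$. For the upper bound I would fix $s>t$, put $p=\pi(w)$, $y=\exp(tw)$, $z=\exp(sw)$, note that $y$ lies on the geodesic segment $[p,z]$ with $d(p,y)=t$ and $d(y,z)=s-t$, and rewrite
\[\bigl(s-d(x,z)\bigr)-\bigl(t-d(x,y)\bigr)=d(x,y)+d(y,z)-d(x,z).\]
Passing to the Euclidean comparison triangle of the geodesic triangle with vertices $x,p,z$ — with vertices $\bar x,\bar p,\bar z$ and $\bar y\in[\bar p,\bar z]$ the comparison point of $y$ — the $\mathrm{CAT}(0)$ comparison inequality, valid in any Hadamard manifold (it follows from the semi‑parallelogram law, Lemma \ref{semiparallelogramlaw}), gives $d(x,y)\le d(\bar x,\bar y)$, so the quantity above is at most $d(\bar x,\bar y)+d(\bar y,\bar z)-d(\bar x,\bar z)$. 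Placing $\bar p$ at the origin, $\bar z$ on a coordinate axis and $\bar x$ at distance $\rho$ with $\theta$ the angle at $\bar p$, and using $d(\bar y,\bar z)=s-t$ and $d(\bar x,\bar z)\ge s-\rho\cos\theta$, an elementary computation bounds this by
\[\sqrt{t^{2}-2\rho t\cos\theta+\rho^{2}}-(t-\rho\cos\theta)=\frac{\rho^{2}\sin^{2}\theta}{\sqrt{t^{2}-2\rho t\cos\theta+\rho^{2}}+(t-\rho\cos\theta)}\ \le\ \frac{\rho^{2}}{2(t-\rho)},\]
the last inequality because the numerator is $\le\rho^{2}$ and each term in the denominator is $\ge t-\rho$. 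Since this holds for every $s>t$, letting $s\to+\infty$ yields the claimed bound on $B_{w}(\pi(w),x)-\bigl(t-d(x,\exp(tw))\bigr)$.

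To conclude, let $K\subset H\times H$ be compact and $\epsilon>0$. Let $\rho_{0}$ be a common upper bound for $d(\pi(v_{n}),\cdot)$ over all $n$ and for $d(\pi(v),\cdot)$, taken over the (compact) projections of $K$ to the two factors; this is finite because $\pi(v_{n})\to\pi(v)$. Choose $T>\rho_{0}$ with $\rho_{0}^{2}/(T-\rho_{0})<\epsilon/3$. Since $r\mapsto r^{2}/(2(T-r))$ is increasing on $[0,T)$, the estimate of the first paragraph gives $\sup_{K}|B_{v_{n}}-\phi^{T}_{v_{n}}|<\epsilon/3$ for every $n$, and likewise $\sup_{K}|B_{v}-\phi^{T}_{v}|<\epsilon/3$; and since $\phi^{T}_{w}$ is jointly continuous and $v_{n}\to v$, we have $\sup_{K}|\phi^{T}_{v_{n}}-\phi^{T}_{v}|<\epsilon/3$ for all large $n$. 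The triangle inequality then gives $\sup_{K}|B_{v_{n}}-B_{v}|<\epsilon$, as desired. (Equivalently: the estimate shows $t-d(x,\exp(tw))\to B_{w}(\pi(w),x)$ uniformly on compact subsets of $S(H)\times H$, so the limit $(w,x)\mapsto B_{w}(\pi(w),x)$ is continuous and Dini's theorem applies directly.) The only genuine work is the uniform estimate of the second paragraph — the single place where non‑positive curvature enters — and everything else is soft.
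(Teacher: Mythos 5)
Your proof is correct, and it takes a genuinely different route from the paper's. The paper disposes of the lemma quickly by citing Eberlein--O'Neill (\cite[Proposition 2.3]{MR0314084}) for the continuity of $C(w,x)=\lim_{t\to+\infty}d(\exp(tw),x)-t$, writing $B_w(x,y)=C(w,x)-C(w,y)$ to get pointwise convergence, and then upgrading to uniform convergence on compacts via the bound $|B_w(x,y)|\le d(x,y)$ (which makes the family $\{B_{v_n}\}$ equi-Lipschitz, so pointwise convergence on a compact set is automatically uniform). You instead make the continuity of $C$ self-contained: your quantitative estimate
\[
0\ \le\ B_w(\pi(w),x)-\bigl(t-d(x,\exp(tw))\bigr)\ \le\ \frac{\rho^2}{2(t-\rho)},\qquad \rho=d(\pi(w),x),\ t>\rho,
\]
proved via the Euclidean comparison triangle (which is indeed available in a Hadamard manifold; it is equivalent to the semi-parallelogram law, Lemma~\ref{semiparallelogramlaw}), shows directly that the continuous finite-time functionals converge to $B_w(\pi(w),\cdot)$ uniformly on compacts in $(w,x)$. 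This is the single place curvature enters your argument, and it replaces the citation with an explicit rate; the cocycle identity then propagates the estimate to $B_w(x,y)$. The paper's route is shorter because it leans on a known result; yours is longer but self-contained and yields a quantitative modulus of convergence that could be reused elsewhere. I checked the details of the comparison computation (the numerator $\rho^2\sin^2\theta$, the bounds $\sqrt{t^2-2\rho t\cos\theta+\rho^2}\ge t-\rho$ and $t-\rho\cos\theta\ge t-\rho$, and the monotonicity of $r\mapsto r^2/(2(T-r))$ on $[0,T)$); they are all correct.
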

\begin{proof}
The function $C:S(H)\times H \to \mathbb{R}$ defined by:
\[C(w,x) = \lim_{t\to +\infty}d(\exp(tw),x) - t\]
is continuous by \cite[Proposition 2.3]{MR0314084}.  We will show that this is equivalent to our claim.

For this purpose observe that if $w \in S(H)$ and $x,y \in H$ then:
\begin{align*}
B_w(x,y) &= \lim_{t \to+\infty}d(\exp(tw),x) - t + t - d(\exp(tw),y)
\\&= C(w,x)-C(w,y)
\end{align*}

It now follows from the the continuity of $C$ that $B_{v_n}$ converges to $B_v$ pointwise.  Uniform convergence on compact subsets is a consequence of the inequality $|B_w(x,y)| \le d(x,y)$ which is valid for all $w \in S(H)$ and $x,y \in H$.
\end{proof}

We will now restrict our attention to manifolds satisfying so called `visibility axioms' 1 and 2.  In these manifolds we will find a natural projection from the set $\mathcal{G} \subset S(H) \times S(H)$ of pairs of distinct unit tangent vectors with the same base point, to $S(H)$.

Our definition of `strong visibility' is equivalent to visibility along with Axiom 2 of \cite[Definition 4.1]{MR0336648} (here we use the fact that $H(\infty)$ with the cone topology is homeomorphic to $S(p)$ for all $p \in H$ as shown in \cite[Theorem 2.10]{MR0336648}).

\begin{definition}[Visibility manifold]
A visibility manifold is a Riemannian manifold whose universal Riemannian covering is a Hadamard manifold $H$ with the following property:  For each $p \in H$ and each $\epsilon > 0$ there exists a number $R = R(p,\epsilon)$ such that for any geodesic segment $[q,r] \subset H$ with $d(p,[q,r]) \ge R$ it holds that $\angle_p(q,r) \le \epsilon$.
\end{definition}

We will use the following result:
\begin{lemma}[Corollary 4.6 of \cite{MR0336648}]
\label{neighborhoodvisibilitylemma}
Let $H$ be a visibility Hadamard manifold.  For each compact $K \subset H$ and $\epsilon > 0$ there exists $R > 0$ such that any geodesic segment $[q,r] \subset H$ with $d(K,[q,r]) \ge R$ satisfies $\angle_p(q,r) \le \epsilon$ for all $p \in K$.
\end{lemma}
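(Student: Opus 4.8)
The plan is to argue by contradiction, reducing the uniform statement to the pointwise visibility axiom by extracting convergent subsequences and using the continuity of angles subtended by far-away points.

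Note first that the only content beyond the definition of a visibility manifold is the \emph{uniformity} of $R$ over $K$: if $[q,r]$ is a geodesic segment with $d(K,[q,r])\ge R$ then $d(p,[q,r])\ge R$ for every $p\in K$, so the pointwise axiom disposes of each $p$ individually; the issue is that $R(p,\epsilon)$ need not, a priori, be bounded as $p$ ranges over $K$. So suppose the conclusion fails for some compact $K$ and some $\epsilon>0$: then for each $n$ there are a geodesic segment $[q_n,r_n]$ with $d(K,[q_n,r_n])\ge n$ and a point $p_n\in K$ with $\angle_{p_n}(q_n,r_n)>\epsilon$. Passing to subsequences, I would use compactness of $K$ to assume $p_n\to p\in K$ and compactness of $\overline{H}=H\cup H(\infty)$ in the cone topology to assume $q_n\to\xi$ and $r_n\to\eta$ in $\overline{H}$; since $d(p,q_n)$ and $d(p,r_n)$ are at least $d(K,[q_n,r_n])\ge n$, the limits $\xi,\eta$ lie in $H(\infty)$.

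The key step is a continuity statement. For $a\in H$ and $z\in\overline{H}$ with $z\neq a$, let $u(a,z)\in S(a)$ be the initial velocity of the geodesic (a ray, if $z\in H(\infty)$) from $a$ to $z$. This map is jointly continuous — a standard feature of the cone topology on a Hadamard manifold, of the sort established in \cite{MR0336648} and \cite{MR0314084} — and the Riemannian angle between two unit vectors at a common point depends continuously on the pair. Applying this to $u(p_n,q_n),u(p_n,r_n)\in S(p_n)$ and, separately, to $u(p,q_n),u(p,r_n)\in S(p)$, I obtain that both $\angle_{p_n}(q_n,r_n)$ and $\angle_p(q_n,r_n)$ tend to the angle at $p$ between the rays to $\xi$ and to $\eta$. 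As $\angle_{p_n}(q_n,r_n)>\epsilon$ for every $n$, that common limit is $\ge\epsilon$, so $\angle_p(q_n,r_n)>\epsilon/2$ for all large $n$. But $d(p,[q_n,r_n])\ge d(K,[q_n,r_n])\ge n$, so the visibility axiom at the single point $p$, applied with parameter $\epsilon/2$, gives $\angle_p(q_n,r_n)\le\epsilon/2$ as soon as $n\ge R(p,\epsilon/2)$ — a contradiction, which proves the lemma.

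I expect the one delicate point to be the joint continuity of $u(\cdot,\cdot)$ up to the boundary, which is precisely where the theory of the cone topology enters; it is exactly the type of statement proved in the cited references. If one prefers to remain inside $H$, the same conclusion follows from a direct comparison estimate: since $d(p,p_n)$ stays bounded while $d(p_n,q_n)$ and $d(p_n,r_n)$ go to infinity, comparison with Euclidean triangles shows that the angles at $q_n$ and at $r_n$ in the triangles with vertices $p,p_n,q_n$ and $p,p_n,r_n$ tend to $0$, and this controls $|\angle_p(q_n,r_n)-\angle_{p_n}(q_n,r_n)|$ by a quantity tending to $0$.
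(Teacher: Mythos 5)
The paper does not supply a proof of this lemma; it is recorded purely as a citation to Corollary~4.6 of Eberlein--O'Neill \cite{MR0336648} and used downstream in Lemma~\ref{projectionalonghorospheres}, so there is no in-paper argument to compare against. Your main argument is nonetheless a correct proof of the cited fact, and it is essentially the natural one: observe that the only new content beyond the pointwise visibility axiom is uniformity of $R$ over $K$, argue by contradiction, extract subsequential limits $p_n \to p \in K$ and $q_n \to \xi$, $r_n \to \eta$ in the compact bordification $\overline H = H \cup H(\infty)$ (with $\xi,\eta \in H(\infty)$ since the distances blow up), and then apply the pointwise visibility property at the limit base point $p$. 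The single genuinely non-elementary ingredient is the joint continuity of $(a,z) \mapsto u(a,z)$ on $H \times (\overline H \setminus \{a\})$, which you correctly flag; that fact is part of the cone-topology formalism in \cite{MR0336648}, \cite{MR0314084} (it is what makes the identification of $\overline H$ with a closed ball independent of base point) and is precisely what forces both $\angle_{p_n}(q_n,r_n)$ and $\angle_p(q_n,r_n)$ to converge to $\angle_p(\xi,\eta)$.

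The fallback ``comparison'' sketch in your last paragraph is more fragile than you indicate. Bounding the angles at $q_n$ and at $r_n$ in the triangles $\Delta(p,p_n,q_n)$ and $\Delta(p,p_n,r_n)$ does not by itself control $|\angle_p(q_n,r_n) - \angle_{p_n}(q_n,r_n)|$: those two quantities live at different base points, there is no single comparison triangle relating them, and in a Hadamard manifold the angle sum of a triangle has no lower bound (ideal triangles have angle sum zero), so the Euclidean heuristic that a small far angle forces the two near angles to be nearly supplementary does not transfer. In dimension at least three one also loses the planar dichotomy that would let an angle with $[p,p_n]$ pin down a direction. Making a purely comparison-theoretic proof rigorous amounts to re-deriving the convergence $u(p_n,q_n) \to u(p,\xi)$, i.e.\ the very continuity statement; the route you led with is the clean one, and it is correct.
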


The existence part of the following definition was shown to hold on any visibility Hadamard manifold in \cite[Proposition 4.4]{MR0336648} and the converse implication was stated without proof.

\begin{definition}[Strong visibility manifold]
A strong visibility manifold is a visibility manifold such that its universal Riemannian covering space $H$ satisfies the following:  For each pair $(v^+,v^-) \in \mathcal{G}$ there exists a unique (up to reparametrization by time translation) unit speed geodesic $\alpha: \mathbb{R} \to H$ such that $\alpha'(0)$ is asymptotic to $v^+$ and $-\alpha'(0)$ is asymptotic to $v^-$.
\end{definition}

Note that it was shown by T.Ukai that any Hadamard manifold whose geodesic flow is of Anosov type is a strong visibility manifold (see \cite{MR1762792}).  To conclude we show that $\alpha$ depends continuously on $(v^+,v^-)$ in the above definitions.  This has been used to construct measurable semi-conjugating maps in section \ref{sectionbundles}.

\begin{lemma}[Projection along horospheres]\label{projectionalonghorospheres}
Let $H$ be a strong visibility Hadamard manifold.  There is a unique function $\pi_h: \mathcal{G} \to S(H)$ such that for all $(v^+,v^-) \in \mathcal{G}$ the following conditions are satisfied:
\begin{enumerate}
\item Setting $v = \pi_h(v^+,v^-)$ it holds that $v$ is asymptotic to $v^+$ and $-v$ is asymptotic to $v^-$.
\item If $p,q \in H$ are such that $v^+,v^- \in S(p)$ and $v \in S(q)$ then $B_v(p,q) = 0$.
\end{enumerate}
Furthermore, $\pi_h$ is continuous.
\end{lemma}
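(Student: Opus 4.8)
The plan is to construct $\pi_h$ pointwise, verify its two defining properties together with uniqueness, and then prove continuity by a subsequence argument; the one genuinely delicate point will be a uniform bound on base points, which is where the visibility hypothesis enters.

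\emph{Pointwise construction and uniqueness.} First fix $(v^+,v^-)\in\mathcal{G}$ with common base point $p$, and let $\alpha:\mathbb{R}\to H$ be the unit speed geodesic supplied by the strong visibility hypothesis, so that $\alpha'(0)$ is asymptotic to $v^+$, $-\alpha'(0)$ is asymptotic to $v^-$, and $\alpha$ is unique up to time translation. Since $\alpha'(s)$ is asymptotic to $v^+$ for every $s$, Lemma \ref{asymptoticbusemannfunctions} gives $B_{\alpha'(s)}=B_{v^+}=:B$ independently of $s$. Because $\alpha$ is parametrized by arclength, the definition of the Busemann function yields $B(\alpha(0),\alpha(s))=s$, so $B(p,\alpha(s))=B(p,\alpha(0))+s$ is affine in $s$ with slope $1$ and has a unique zero $s_0$. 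Put $v=\pi_h(v^+,v^-):=\alpha'(s_0)$ and $q=\alpha(s_0)$: then property (1) holds by construction, and property (2) reads $B_v(p,q)=B(p,\alpha(s_0))=0$. For uniqueness, if some $v$ satisfies (1) then $t\mapsto\exp(tv)$ is a bi-infinite geodesic with $v$ asymptotic to $v^+$ and $-v$ asymptotic to $v^-$, hence a time translate of $\alpha$; thus $v=\alpha'(s)$ for some $s$, and (2) forces $s=s_0$. So $\pi_h$ is well defined.

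\emph{Continuity.} Now let $(v_n^+,v_n^-)\to(v^+,v^-)$ in $\mathcal{G}$, with base points $p_n\to p$, and set $v_n=\pi_h(v_n^+,v_n^-)$ with base point $q_n$, and $v=\pi_h(v^+,v^-)$ with base point $q$. It suffices to show that $\{v_n\}$ is relatively compact in $S(H)$ and that every subsequential limit equals $v$; then $v_n\to v$. \emph{The main obstacle is relative compactness}, which, since $\|v_n\|=1$, reduces to a uniform bound $d(p,q_n)\le C$. Let $\ell_n$ be the bi-infinite geodesic joining the points at infinity determined by $v_n^+$ and $v_n^-$; it contains $q_n$. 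Since $v^+\neq v^-$ have the same base point, the two ends of $\ell_n$ subtend at $p$ an angle $\theta_n$ with $\theta_n\to\angle(v^+,v^-)>0$ by standard continuity of the cone topology, so $\theta_n>\epsilon_0$ for large $n$ for some $\epsilon_0>0$; by Lemma \ref{neighborhoodvisibilitylemma} (applied with $K=\{p\}$ and ever longer subsegments of $\ell_n$) there is $\rho>0$ with $d(p,\ell_n)<\rho$ for all large $n$, so we may pick $r_n\in\ell_n$ with $d(p,r_n)<\rho$. Writing $r_n=\gamma_n(t_n)$ along $\gamma_n(t)=\exp(tv_n)$, so $\gamma_n(0)=q_n$, and using that $B_{v_n}$ restricted to $\gamma_n$ is affine of slope $1$ together with $B_{v_n}(p_n,q_n)=0$, one gets $d(q_n,r_n)=|t_n|=|B_{v_n}(p_n,r_n)|\le d(p_n,r_n)$, whence $d(p,q_n)\le 2\rho+d(p,p_n)$ is bounded.

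\emph{Identifying the limits.} Finally, let $w$ be any subsequential limit of $\{v_n\}$, with base point $q_w$ (a subsequential limit of the $q_n$). Since $v_n$ is asymptotic to $v_n^+$ with $v_n\to w$ and $v_n^+\to v^+$, Lemma \ref{asymptoticquotienttopology} gives that $w$ is asymptotic to $v^+$; likewise $-w$ is asymptotic to $v^-$, so $w$ satisfies (1). By Lemma \ref{convergenceofbusemannfunctions}, $B_{v_n}\to B_w$ uniformly on compact sets, hence $0=B_{v_n}(p_n,q_n)\to B_w(p,q_w)$, so $w$ satisfies (2). By the uniqueness proved above, $w=\pi_h(v^+,v^-)=v$. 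Therefore $\{v_n\}$ is relatively compact and every subsequential limit equals $v$, so $v_n\to v$ and $\pi_h$ is continuous.
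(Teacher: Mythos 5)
Your proof is correct and follows essentially the same route as the paper's: build $\pi_h$ from the strong-visibility bi-infinite geodesic using the slope-one Busemann parametrization for existence and uniqueness, then prove continuity by identifying every subsequential limit via Lemmas \ref{asymptoticquotienttopology} and \ref{convergenceofbusemannfunctions} and getting relative compactness from Lemma \ref{neighborhoodvisibilitylemma} plus the cocycle relation for $B_{v_n}$. The only cosmetic difference is in the boundedness step, where you take $K=\{p\}$ and appeal to continuity of the boundary map and the angle in the cone topology, while the paper enlarges $K$ to $\{p\}\cup\bigcup_n\{p_n\}$ and works with the Riemannian angle $\angle(v_n^+,v_n^-)$ at $p_n$ so that no such continuity is needed.
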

\begin{proof}
It follows from Lemma \ref{asymptoticbusemannfunctions} that if $v,w \in S(H)$ are asymptotic then $B_v = B_w$.  Also, because geodesics are globally minimizing, one has $B_v(\exp(sv),\exp(tv)) = t-s$.  Hence, given $(v^+,v^-) \in \mathcal{G}$ with basepoint $p$, if $\alpha$ is the unique unit speed geodesic (up to reparametrization by time translation) with $\alpha'(0)$ asymptotic to $v^+$ and $-\alpha'(0)$ asymptotic to $v^-$ then there is a unique $t \in \mathbb{R}$ with $B_{v^+}(\alpha(t),p) = 0$.  This shows that the function $\pi_h$ exists and is unique (in fact, in the above notation, $\pi_h(v^+,v^-) = \alpha'(t)$).

To establish continuity let $(v^+,v^-) \in \mathcal{G} \cap S(p)$ with $\pi_h(v^+,v^-) = v$ and let $q \in H$ be such that $v \in S(q)$.  Consider sequences $p_n$ and $v^+_n,v^-_n \in S(p_n)$ such that:
\[(v_n^+,v_n^-) \to (v^+,v^-)\text{ when }n \to +\infty\]
Finally, define $v_n = \pi_h(v_n^+,v_n^-)$ and let $q_n$ be such that $v_n \in S(q_n)$.  We must show that $v_n \to v$ when $n \to +\infty$.

First suppose $v_n \to w \in S(r)$ when $n \to +\infty$.  One has that $B_{v_n}(p_n,q_n) = 0$ for all $n$ and by Lemma \ref{convergenceofbusemannfunctions} this implies that $B_w(p,r) = 0$.  Also by Lemma \ref{asymptoticquotienttopology} one has that $w$ is asymptotic to $v^+$ and $-w$ to $v^-$.  By the uniqueness of $\pi_h$, which we have already established, it follows that $r = q$ and $w = v$.

The above argument implies that any convergent subsequence of $v_n$ must converge to $v$, hence to establish continuity it suffices to show that $\{v_n\}$ is bounded.

For this purpose take $0 < \epsilon < \inf_n\angle_{p_n}(\exp(v_n^+),\exp(v_n^-))$ and $K = \{p\} \cup \bigcup_n \{p_n\}$.  By Lemma \ref{neighborhoodvisibilitylemma} there exists a number $R$ and a sequence $\{t_n\} \subset \mathbb{R}$ such that:
\[d(\exp(t_nv_n),p_n) \le R\]
and therefore $|B_{v_n^+}(p_n,\exp(t_nv_n))| \le R$.

However since 
\[0 = B_{v_n}(p_n,q_n) = B_{v_n}(p_n,\exp(t_nv_n)) + B_{v_n}(\exp(t_nv_n), q_n) = O(1) - t_n\]
one obtains that $t_n = O(1)$ when $n \to +\infty$ and hence $\{v_n\}$ is bounded.
\end{proof}

\def\cprime{$'$}

\end{document}